\documentclass[a4paper,12pt]{preprint}
\usepackage[margin=1.3in]{geometry}  
\usepackage[full]{textcomp}
\usepackage[osf]{newtxtext}

\usepackage{mathalfa}
\usepackage{microtype}

\usepackage{enumitem}
\usepackage{amsmath}
\usepackage{amsfonts} 
\usepackage{amssymb}             

\usepackage{comment}
\usepackage{breakurl}

\usepackage{mathrsfs}
\usepackage{booktabs}
\usepackage{mathtools}

\usepackage{tikz}
\usepackage{amsthm}                
\usepackage{mhequ}
\usepackage{hyperref}

\usepackage{accents}

\hypersetup{pdfstartview=}

\setcounter{tocdepth}{4}
\setcounter{secnumdepth}{4}

\addtocontents{toc}{\protect\hypersetup{hidelinks}}

\DeclareSymbolFont{sfoperators}{OT1}{ptm}{m}{n}
\DeclareSymbolFontAlphabet{\mathsf}{sfoperators}

\makeatletter
\def\operator@font{\mathgroup\symsfoperators}
\makeatother

\numberwithin{equation}{section}

\newtheorem{thm}{Theorem}[section]
\newtheorem{defn}[thm]{Definition}
\newtheorem{lem}[thm]{Lemma}
\newtheorem{prop}[thm]{Proposition}

\newtheorem{assumption}[thm]{Assumption}
\theoremstyle{remark}

\newtheorem{rmk}[thm]{Remark}
\newtheorem{rem}[thm]{Remark}

\makeatletter
\def\th@newremark{\th@remark\thm@headfont{\bfseries}}

\def\bdiamond{\mathop{\mathpalette\bdi@mond\relax}}
\newcommand\bdi@mond[2]{%
	\vcenter{\hbox{\m@th
			\scalebox{\ifx#1\displaystyle 2.6\else1.8\fi}{$#1\diamond$}%
	}}%
}

\def\bDiamond{\mathop{\mathpalette\bDi@mond\relax}}
\newcommand\bDi@mond[2]{%
	\vcenter{\hbox{\m@th
			\scalebox{\ifx#1\displaystyle 2.6\else1.2\fi}{$#1\Diamond$}%
	}}%
}
\makeatletter

\definecolor{darkgreen}{rgb}{0.1,0.7,0.1}
\definecolor{darkred}{rgb}{0.7,0.1,0.1}
\definecolor{darkblue}{rgb}{0,0,0.7}
\addtolength{\marginparwidth}{2.3em}

\newcommand{\RRR}{\mathbb{R}}

\newcommand{\EE}{\mathbb{E}}

\newcommand{\RR}{\mathbb{R}}

\newcommand{\fF}{\mathcal{F}}
\newcommand{\gG}{\mathcal{G}}
\newcommand{\hH}{\mathcal{H}}
\newcommand{\iI}{\mathcal{I}}

\newcommand{\nN}{\mathcal{N}}

\newcommand{\R}{\mathcal{R}}
\newcommand{\sS}{\mathcal{S}}

\newcommand{\xX}{\mathcal{X}}

\newcommand{\spacetime}{(I\times \RRR)}

\newcommand{\limn}{\lim_{n\rightarrow \infty}}
\newcommand{\limsupn}{\limsup_{n\rightarrow\infty}}

\newcommand{\eps}{\varepsilon}

\makeatletter 
\newcommand{\cov}{{\operator@font cov}}
\newcommand{\var}{{\operator@font var}}
\newcommand{\corr}{{\operator@font corr}}
\newcommand{\diam}{{\operator@font diam}}
\newcommand{\Av}{{\operator@font Av}}
\newcommand{\trig}{{\operator@font trig}}
\newcommand{\Enh}{{\operator@font Enh}}
\makeatother


\colorlet{symbols}{blue!90!black}
\colorlet{testcolor}{green!60!black}

\def\${|\!|\!|}

\makeatletter
\def\DeclareSymbol#1#2#3{\expandafter\gdef\csname MH@symb@#1\endcsname{\tikz[baseline=#2,scale=0.15,draw=symbols]{#3}}\expandafter\gdef\csname MH@symb@#1s\endcsname{\scalebox{0.7}{\tikz[baseline=#2,scale=0.15,draw=symbols]{#3}}}}
\def\<#1>{\csname MH@symb@#1\endcsname}
\makeatother

\setlist[itemize]{topsep=3pt,itemsep=1.5pt,parsep=0pt}

\def\scal#1{\langle#1\rangle}
\def\cent#1{\mathopen{{\langle\kern-0.3em\rangle}}#1\mathclose{{\langle\kern-0.3em\rangle}}}

\def\d{\partial}

\begin{document}

\title{Global well-posedness for the mass-critical stochastic nonlinear Schr\"odinger equation on $\RR$: general $L^2$ data}
\author{Chenjie Fan$^1$ and Weijun Xu$^2$}
\institute{University of Chicago, US, \email{cjfanpku@gmail.com}
\and University of Warwick, UK / NYU Shanghai, China, \email{weijunx@gmail.com}}

\maketitle

\begin{abstract}
We continue our study for the stochastic defocusing mass crtical nonlinear Schr\"odinger equation with conservative multiplicative noise, and show that it is globally well-posed for arbitrary initial data in $L_{\omega}^{\infty}L_{x}^{2}$. The main ingredients are several stability type results for deterministic (modified) NLS, which have their own interest. We also give some results on other stochastic NLS type models.
\end{abstract}

\setcounter{tocdepth}{2}
\microtypesetup{protrusion=false}
\tableofcontents
\microtypesetup{protrusion=true}

\section{Introduction}

\subsection{Main result}

The aim of the article is to establish the global well-posedness of the stochastic nonlinear Schr\"odinger equation
\begin{equation} \label{eq:stratonovich}
i \d_t u + \Delta u = |u|^4 u + u \circ \frac{{\rm d} W}{{\rm d} t}\;, \qquad x \in \RR\;, t \in \RR^+
\end{equation}
for arbitrary $L^2$-bounded initial data $u_0$ independent of the noise. The noise $\frac{{\rm d}W}{{\rm d} t}$ is a real-valued Gaussian processes that is white in time and coloured in space. The product $\circ$ appearing in the equation is in the Stratonovich sense, the only one that preserves the $L^2$ norm of the solution. 

We first give our precise assumption on the noise. Let $\hH$ be the Hilbert space of real-valued functions on $\RR$ with the inner product
\begin{equation*}
\scal{f,g}_{\hH} = \sum_{j=1}^{N} \scal{(1+|x|^K) f^{(j)}, (1+|x|^K) g^{(j)}}_{L^2}
\end{equation*}
for some sufficiently large $K$ and $N$ ($K, N = 10$ would be enough). Our assumption on the noise is the following. 

\begin{assumption} \label{as:noise}
	The noise $W$ has the form $W = \Phi \tilde{W}$, where $\Phi: L^2(\RR) \rightarrow \hH$ is a trace-class operator, and $\tilde{W}$ is the cylindrical Wiener process on $L^2(\RR)$. 
\end{assumption}

With this assumption on the noise, one can re-write \eqref{eq:stratonovich} in its It\^o form as
\begin{equation*}
i \d_t u + \Delta u = |u|^4 u + u \dot{W} - \frac{i}{2} u F_{\Phi}, 
\end{equation*}
where $\dot{W} = {\rm d} W / {\rm d} t$, the product between $u$ and $\dot{W}$ is in the It\^o sense, and
\begin{equation*}
F_{\Phi}(x) = \sum_{k} (\Phi e_k)^{2}(x)
\end{equation*}
is the It\^o-Stratonovich correction, which is independent of the choice of orthornormal basis $\{e_k\}$ of $L^2(\RR)$. 

We first introduce some notations. For every interval $\iI$, let
\begin{equation*}
\xX_1(\iI) = L_{t}^{\infty}L_{x}^{2}(\iI) = L^{\infty}(\iI, L^{2}(\RR))\;, \quad \xX_2(\iI) = L_{t}^{5}L_{x}^{10}(\iI) = L^{5}(\iI, L^{10}(\RR)), 
\end{equation*}
and $\xX = \xX_1 \cap \xX_2$ with $\|\cdot\|_{\xX(\iI)} = \|\cdot\|_{\xX_1(\iI)} + \|\cdot\|_{\xX_2(\iI)}$. For every $\rho \geq 1$, we also write $L_{\omega}^{\rho} \xX(\iI) = L^{\rho}(\Omega, \xX(\iI))$. Our main theorem is the following. 

\begin{thm} \label{th:main}
	Let $W$ be the Wiener process above and $u_0$ be independent of $W$ with $\|u_0\|_{L_{\omega}^{\infty}L_{x}^{2}} < +\infty$. Then, there exists a unique global flow $u$ adapted to the filtration generated by $W$ such that $u \in L_{\omega}^{\rho}\xX(0,T)$ for every $T>0$ and every $\rho \geq 5$, and satisfies
	\begin{equation} \label{eq:main_duhamel}
	\begin{split}
	u(t) &= \sS(t) u_0 - i \int_{0}^{t} \sS(t-s) \big( |u(s)|^{4} u(s) \big) {\rm d}s\\
	&-i \int_{0}^{t} \sS(t-s) u(s) {\rm d} W_s - \frac{1}{2} \int_{0}^{t} \sS(t-s) \big( F_{\Phi} u(s) \big) {\rm d}s, 
	\end{split}
	\end{equation}
	where the equality holds in $\xX(0,T)$ and the stochastic integral is in the It\^o sense. Furthermore, for every $\rho\geq 5$, we have
	\begin{equation*}
	\|u\|_{L_{\omega}^{\rho}\xX(0,T)} \lesssim_{\rho, \|u_0\|_{L_{\omega}^{\infty}L_{x}^{2}},T}  1.
	\end{equation*}
	and we have pathwise mass conservation in the sense that $\|u(t)\|_{L_x^2} = \|u_0\|_{L_x^2}$ almost surely for every $t \in [0,T]$. 
\end{thm}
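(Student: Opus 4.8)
\emph{Strategy.} The proof combines the deterministic global theory of the defocusing mass-critical NLS on $\RR$ (Dodson's theorem) with a stability/perturbation argument that absorbs the noise, the whole construction being glued together by a time-slicing adapted to the pathwise size of $W$. Since $u_0$ is independent of $W$ with $\|u_0\|_{L_\omega^\infty L_x^2}\le R<\infty$, a conditioning argument reduces everything to proving existence, uniqueness, the a priori bound and pathwise mass conservation for an arbitrary \emph{deterministic} $u_0$ with $\|u_0\|_{L_x^2}\le R$, with all bounds uniform over such data. Local well-posedness of \eqref{eq:main_duhamel} in $\xX$ on a random interval $[0,\tau]$ (with $\tau$ a stopping time) follows from the inhomogeneous Strichartz estimates for $\sS$ together with stochastic (Burkholder--Davis--Gundy type) Strichartz estimates for the convolution $\int_0^{\cdot}\sS(\cdot-s)u(s)\,{\rm d}W_s$, which are available under Assumption \ref{as:noise}; because the equation is $L^2$-critical, $\tau$ depends on the local profile of $u_0$ and not just on $R$, so local theory alone does not globalize. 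Pathwise mass conservation on the existence interval comes from It\^o's formula applied to $t\mapsto\|u(t)\|_{L_x^2}^2$: the martingale part vanishes \emph{pointwise} because $W$ is real-valued, the Laplacian and the defocusing nonlinearity do not contribute, and the drift created by $-\tfrac{i}{2}F_\Phi u$ is exactly cancelled by the It\^o correction coming from the quadratic variation of $u\,{\rm d}W$. In particular $\|u(t)\|_{L_x^2}\equiv\|u_0\|_{L_x^2}$ throughout, which is the structural fact enabling the mass-critical globalization.

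Everything then reduces to an a priori bound $\|u\|_{\xX(0,T)}\le\mathcal C(\omega)$ with $\mathcal C\in L_\omega^\rho$ for all $\rho$, which simultaneously yields existence on all of $[0,T]$. I would obtain it by perturbing around the deterministic theory in two layers. First, Dodson's global space-time bound: for data of $L_x^2$-mass $\le R$ the genuine defocusing quintic solution $w$ satisfies $\|w\|_{\xX(0,T)}\le M(R)<\infty$ \emph{uniformly in $T$} — this uses only the conserved mass. Second, the stability results for the ``modified NLS'', by which I mean the defocusing quintic equation augmented by the bounded, spatially-decaying potential term $-\tfrac{i}{2}F_\Phi u$ (equivalently, if one removes the stochastic convolution by the It\^o gauge change $u\mapsto e^{iW}u$, the deterministic equation with the extra first- and zeroth-order terms built from $\nabla W$, $\Delta W$, $|\nabla W|^2$). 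Since the noise-induced perturbation is \emph{not} small on a fixed $[0,T]$, the decisive step is to partition $[0,T]$ into $N=N(\omega)$ consecutive sub-intervals $[t_k,t_{k+1}]$ on each of which the stochastic forcing (resp.\ the gauge terms) is small in the relevant dual-Strichartz norm, to apply the long-time stability statement for the mass-critical (modified) NLS around $w$ on each sub-interval, and to propagate the bound inductively from $t_k$ to $t_{k+1}$. Both $N(\omega)$ and the amplification through this induction are controlled by a single norm $A(\omega)$ measuring $W$ on $[0,T]$ — e.g.\ a H\"older-in-time norm of $W$ valued in $\hH$ together with the bounds it induces on $\nabla W$, $\Delta W$, $|\nabla W|^2$ — and the essential quantitative content of the stability lemmas is that the output bound on $\|u\|_{\xX(0,T)}$ grows in $A(\omega)$ slowly enough (sub-Gaussianly) to survive against the Gaussian law of $A$.

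The probabilistic step is then soft: by Assumption \ref{as:noise}, $t\mapsto W(t)$ is a continuous $\hH$-valued Gaussian process, so $A(\omega)$ is a measurable seminorm of a Gaussian and Fernique's theorem gives $\EE\exp(cA^2)<\infty$ for some $c>0$; combined with the slow growth of the deterministic bound this yields $\|u\|_{L_\omega^\rho\xX(0,T)}\lesssim_{\rho,R,T}1$ for every $\rho\ge1$, in particular the asserted estimate for $\rho\ge5$. Adaptedness is inherited from the adapted local construction and from the fact that the sub-interval endpoints $t_k$ are stopping times. Uniqueness within $\bigcup_{\rho\ge5}L_\omega^\rho\xX(0,T)$ follows from local uniqueness in $\xX$ (a Strichartz--Gronwall contraction) propagated to $[0,T]$ by the usual open-and-closed argument, using the a priori bound to exclude any branching; and passing back from deterministic $u_0$ to $L_\omega^\infty L_x^2$ data is the conditioning argument from the first paragraph.

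The main obstacle is the second paragraph: a stability theory for the \emph{mass-critical} (modified) NLS with \emph{effective} constants. Mass-criticality removes any coercive conservation law, forcing one to use Dodson's theorem as a black box and perturb around it, and three points are delicate. (i) Dodson's constant $M(R)$ is only known to be finite, so the smallness thresholds in the stability statement — and hence $N(\omega)$ and the induced growth — must be tracked carefully through it. (ii) One must control the relevant norm of the perturbation: the stochastic convolution on a short interval, with the right $\omega$-integrability and uniformly over the random restart data, or — in the gauged picture — the first-order term $\nabla W\cdot\nabla v$, which nominally costs a derivative of $v$ and so must be reorganized so that only $\xX$-norms of $v$ appear; the decay and regularity of $W$ in Assumption \ref{as:noise} are used essentially here. (iii) The propagation of the bound over the $N(\omega)$ sub-intervals must be arranged so the output remains integrable against the \emph{merely} Gaussian law of $A(\omega)$. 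Handling (i)--(iii) simultaneously is, I expect, exactly why the paper needs ``several stability type results'' rather than a single off-the-shelf perturbation lemma, and it is where the gap between ``general $L^2$ data'' and the small-data regime really bites.
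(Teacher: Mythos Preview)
Your high-level architecture --- Dodson's scattering bound as input, stability around it, random time-slicing to make the stochastic forcing small on each piece, then moment bounds on the number of pieces --- is indeed the skeleton of the paper's argument. But the route the paper takes to realise it is substantially different from either of the two you sketch, and both of yours have real gaps.

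The paper does \emph{not} work with \eqref{eq:main_duhamel} directly, nor with any gauge-transformed version of it. It constructs $u$ as a double limit of the truncated mass-\emph{sub}critical approximants $u_{m,\eps}$ of \eqref{eq:duhamel_me}, whose global existence and pathwise mass conservation are imported from \cite{BD}; the whole paper then reduces to the uniform-in-$(m,\eps)$ bound of Theorem~\ref{th:snls_uni_bd}. The ``stability type results for deterministic (modified) NLS'' are Propositions~\ref{pr:sta_me}--\ref{pr:strong_sta_me} for the equation carrying the cutoff $\theta_m$ and the nonlinearity $\nN^\eps$, not for an NLS with potential terms coming from a gauge. The deepest ingredient is Proposition~\ref{pr:Dodson_eps}, a Dodson bound uniform in $\eps\in(0,1)$, and this is \emph{not} a perturbation of Dodson's theorem: it is obtained by a full concentration-compactness/profile-decomposition argument (Sections~\ref{secroadmap2}--\ref{sectechmain}) in which the broken scaling of $\nN^\eps$ forces a case analysis on $\lim_n\lambda_{j,n}^{\eps_n}$. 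The point of the approximation layer is that $u_{m,\eps}$ exists globally \emph{before} any $\xX_2$-bound is known, so Burkholder can be applied to the maximal stochastic convolution $M^*$ (Proposition~\ref{pr:bound_maximal}) using only mass conservation; one then partitions $[0,T_0]$ at the times where $\int|M^*|^5$ accumulates $\eta/2$, applies the deterministic Theorem~\ref{th:main_bd} on each piece, and sums. The output is linear in $\|M^*\|_{L_t^5}$, which has all $L_\omega^\rho$-moments by Burkholder --- no Fernique, no sub-Gaussian growth condition, no pathwise norm of $W$ enters.

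By contrast, your gauged route produces the term $2i\nabla W\cdot\nabla v$, which genuinely costs a derivative of $v$; the space $\xX$ carries no regularity, and there is no algebraic rearrangement that turns this into a lower-order perturbation controllable by $\xX$-norms of $v$ alone (local smoothing would be a different and much heavier machine than what you describe). Your ungauged route, as you phrase it, asks a pathwise seminorm $A(\omega)$ of $W$ to control the stochastic convolution $\int\sS(t-s)u(s)\,{\rm d}W_s$ --- but this is a genuine It\^o integral against a Brownian-in-time driver and admits no such pathwise bound; Fernique on $A(\omega)$ tells you nothing about it. The correct object to slice on is $M^*$, which depends on $u$ and is controlled only in $L_\omega^\rho$ via Burkholder; this is precisely why the paper needs the a priori globally existing $u_{m,\eps}$ rather than a continuation argument for $u$ itself.
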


\begin{rmk}
	In proving Theorem~\ref{th:main}, we will first choose a deterministic but sufficiently small $T_0$ depending on $\|u_0\|_{L_{\omega}^{\infty}L_{x}^{2}}$ and prove the existence of $u \in L_{\omega}^{\rho}\xX(0,T_0)$. Then pathwise mass conservation will allow us to extend this $u$ globally in time. 
\end{rmk}

\begin{rem}
Strictly pathwise conservation of mass in not necessary, as far as one can have pathwise control on the growth of the mass, same result will hold. For example, for the same Wiener process, if one consider equation
$i \d_t u + \Delta u = |u|^4 u + u \dot{W} $, (without Ito-Stratonicvich correction), since one has pathwise control on the growth of the mass, same result will hold. (In this example ,the mass grows at most exponentially in time.)
\end{rem}

\begin{rem}
The analysis in this paper, with slight modification, indeed gives the parallel results for the focusing case. Let  $u$ solves
\begin{equation*}
i \d_t u + \Delta u = -|u|^4 u + u \dot{W} - \frac{i}{2} u F_{\Phi}, 
\end{equation*}
with initial data $\|u_{0}\|_{L_{\omega}^{\infty}L_{x}^{2}}< \|Q\|_{L_{x}^{2}}$, then $u$ is global and for any $T>0$, one has
\begin{equation}
\|u\|_{L_{\omega}^{\rho}\xX(0,T)} \leq C_{\rho, \|u_0\|_{L_{\omega}^{\infty}L_{x}^{2}},T} .
\end{equation}
Here $Q$  is the unique postive $L^{2}$ solution solves
\begin{equation}
-\Delta Q+Q=|Q|^{4}Q.
\end{equation}
\end{rem}

Well-posedness of mass subcritical stochastic nonlinear Schr\"odinger equation with multiplicative noise was initiated in \cite{BD}, and has been extensively studied since then (see for example \cite{Rockner1}, \cite{Rockner2}). See also the recent article \cite{Hornung}.

\subsection{Construction of the solution and uniform boundedness}

The solution $u \in L_{\omega}^{\rho}\xX(0,T_0)$ in Theorem~\ref{th:main} was constructed when the initial data $u_0$ satisfies $\|u_0\|_{L_{\omega}^{\infty}L_{x}^{2}} \leq \delta_0$ for some sufficiently small $\delta_0$, and the main improvement of the current article is that we are able to drop the smallness assumption of the initial data. 

Let us give a brief overview on how the solution was constructed in \cite{snls_small}, and see where we used the smallness of initial data there.

We start with the existence of the solution in the truncated subcritical problem. Let $\theta: \RR^+ \rightarrow \RR^+$ be smooth with compact support in $[0,2)$, and $\theta=1$ on $[0,1]$. For every $m>0$, let
\begin{equation*}
\theta_{m}(x) = \theta(x/m). 
\end{equation*}
Also, for every $\eps>0$, we let $\nN^\eps(u) = |u|^{4-\eps} u$ and also $\nN(u) = |u|^4 u$. A result in \cite{BD} states that for every $m,\eps>0$ and $u_0 \in L_{\omega}^{\infty}L_{x}^{2}$, there is a unique process $u_{m,\eps} \in L_{\omega}^{\rho}\xX(0,T_0)$ satisfying
\begin{equation} \label{eq:duhamel_me}
\begin{split}
u_{m,\eps}(t) = &\sS(t) u_0 - i \int_{0}^{t} \sS(t-s) \Big( \theta_{m}\big( \|u_{m,\eps}\|_{\xX_2(0,s)} \big) \nN^{\eps} \big( u_{m,\eps}(s) \big) \Big) {\rm d}s\\
&- i \int_{0}^{t} \sS(t-s) u_{m,\eps}(s) {\rm d} W_s - \frac{1}{2} \int_{0}^{t} \sS(t-s) \big( F_{\Phi} u_{m,\eps}(s) \big) {\rm d}s, 
\end{split}
\end{equation}
and one has the pathwise mass conservation $\|u_{m,\eps}(t)\|_{L_{x}^{2}} = \|u_0\|_{L_x^2}$ almost surely. In \cite{snls_small}, starting from the family $\{u_{m,\eps}\}_{m,\eps}$, we took the following procedures: 
\begin{enumerate}
	\item For every $m>0$, we were able to show that the sequence $\{u_{m,\eps}\}_{\eps}$ converges in $L_{\omega}^{\rho}\xX(0,T_0)$ to a limit $u_m$, which satisfies
	\begin{equation} \label{eq:duhamel_m}
	\begin{split}
	u_{m}(t) = &\sS(t) u_0 - i \int_{0}^{t} \sS(t-s) \Big( \theta_{m}\big( \|u_{m}\|_{\xX_2(0,s)}^{5} \big) \nN \big( u_{m}(s) \big) \Big) {\rm d}s\\
	&- i \int_{0}^{t} \sS(t-s) u_{m}(s) {\rm d} W_s - \frac{1}{2} \int_{0}^{t} \sS(t-s) \big( F_{\Phi} u_{m}(s) \big) {\rm d}s
	\end{split}
	\end{equation}
	in the same space. This step requires only $\|u_0\|_{L_{\omega}^{\infty}L_{x}^{2}} < +\infty$ but no assumption on its actual value. 
	
	\item In the second step, starting from the sequence $\{u_m\}$ as obtained in the previous step, we were able to show that $u_m \rightarrow u$ in $L_{\omega}^{\rho}\xX(0,T_0)$ and the limit $u$ satisfies \eqref{eq:main_duhamel}. It is the proof of this convergence in \cite{snls_small} that used the smallness of $\|u_0\|_{L_{\omega}^{\rho}\xX(0,T_0)}$. More precisely, we showed that there exists $\delta_0$ sufficiently small such that if $\|u_0\|_{L_{\omega}^{\rho}\xX(0,T_0)} \leq \delta_0$, then the solution $u_{m,\eps}$ to \eqref{eq:duhamel_me} satisfies the bound
	\begin{equation*}
	\|u_{m,\eps}\|_{L_{\omega}^{\rho}\xX(0,T_0)} \leq B
	\end{equation*}
	for some $B$ independent of $m$ and $\eps$. It is this uniform bound that allows us to take the limit as $m \rightarrow +\infty$ and obtain convergence of $u_m$ to $u$. 
\end{enumerate}

In fact, the only place where we used the smallness of $\|u_0\|_{L_{\omega}^{\infty}L_{x}^{2}}$ is to establish the uniform boundedness of $\|u_{m,\eps}\|_{L_{\omega}^{\rho}L_{x}^{2}}$. As long as one has this uniform bound, one necessarily has the desired convergence $u_{m} \rightarrow u$. 

Hence, the key to construct the solution $u$ to \eqref{eq:main_duhamel} with arbitrary $u_0 \in L_{\omega}^{\infty}L_{x}^{2}$ is to prove the uniform boundedness of $\|u_{m,\eps}\|_{L_{\omega}^{\rho}\xX(0,T_0)}$. In fact, this is the key bound in our article. It is stated in the following theorem. 

\begin{thm} \label{th:snls_uni_bd}
	There exists $T_0 > 0$ such that for every $M>0$ and $\rho \geq 5$, there exists $B=B_{M,\rho}>0$ such that the solution $u_{m,\eps}$ to \eqref{eq:duhamel_me} with $\|u_0\|_{L_{\omega}^{\infty}L_{x}^{2}} \leq M$ satisfies
	\begin{equation*}
	\|u_{m,\eps}\|_{L_{\omega}^{\rho}\xX(0,T_0)} \leq B. 
	\end{equation*}
	The bound is uniform over $m>0$ and $\eps \in (0,1)$. 
\end{thm}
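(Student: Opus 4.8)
\medskip
\noindent\textbf{Proof proposal.}
The plan is to reduce the uniform bound to a deterministic input plus a stochastic perturbation argument, after taking $T_0$ small in terms of $M$. The deterministic input is a uniform-in-$\eps$ global bound for the (modified) defocusing mass-critical NLS on $\RR$: for every $M$ there is $L_0=L_0(M)$ such that the solution $v^{\eps}$ of $i\d_t v+\Delta v=\nN^{\eps}(v)$ with $v^{\eps}(0)=v_0$, $\|v_0\|_{L^2}\le M$, satisfies $\|v^{\eps}\|_{\xX(0,T_0)}\le L_0$ for all $\eps\in[0,1)$ --- for $\eps=0$ this is Dodson's theorem, and its uniformity as $\eps\to 0$ is one of the deterministic stability results of the paper. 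Since $\theta_m\le 1$, the truncation only weakens the nonlinearity and plays no essential role. The idea is then to freeze the random datum $u_0(\omega)$, take $v^{\eps}$ as above with $v^{\eps}(0)=u_0(\omega)$ (so $\|v^{\eps}\|_{\xX(0,T_0)}\le L_0$ almost surely), and compare $\uem$ to $v^{\eps}$ on $[0,T_0]$; the discrepancy is driven only by the stochastic integral, the It\^o--Stratonovich correction, and --- possibly --- a term from the truncation.

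\emph{Step 1 (the forcing is small).} By pathwise mass conservation, $\|\uem(s)\|_{L_x^2}=\|u_0\|_{L^2}\le M$, so $\|\uem\|_{\xX_1(0,T_0)}\le M$ almost surely. For the stochastic convolution, the Burkholder--Davis--Gundy inequality, the Hilbert--Schmidt bound $\|\uem(s)\Phi\|_{HS(L^2,L^2)}\lesssim_{\Phi}\|\uem(s)\|_{L_x^2}$ coming from Assumption~\ref{as:noise}, and the inhomogeneous Strichartz estimate give, for $T_0\le 1$ and $\rho\ge 5$,
\[
\Big\|\int_0^{\cdot}\sS(\cdot-s)\uem(s)\,{\rm d}W_s\Big\|_{L^{\rho}_{\omega}\xX(0,T_0)}\ \lesssim_{\rho,\Phi}\ \sqrt{T_0}\,\|\uem\|_{L^{\rho}_{\omega}\xX_1(0,T_0)}\ \le\ \sqrt{T_0}\,M\,c_{\rho,\Phi},
\]
the factor $\sqrt{T_0}$ appearing because the It\^o isometry produces an $L^2$-in-time norm of the integrand on $[0,T_0]$, which after Cauchy--Schwarz in time is $\sqrt{T_0}$ better than the $L^1$-in-time that the critical Strichartz space requires. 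The It\^o--Stratonovich correction is a bounded-potential term, with $\big\|\int_0^{\cdot}\sS(\cdot-s)(F_{\Phi}\uem(s))\,{\rm d}s\big\|_{L^{\rho}_{\omega}\xX(0,T_0)}\lesssim_{\Phi}T_0 M$. Writing $E$ for the sum of these two, $\|E\|_{L^{\rho}_{\omega}\xX(0,T_0)}\le C_{\rho,\Phi}(\sqrt{T_0}+T_0)M$, and we now fix $T_0=T_0(M,\rho)$ so small that this is at most the smallness threshold $\kappa=\kappa(L_0,\rho)$ of the stochastic stability lemma used below.

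\emph{Step 2 (stability and a bootstrap in $m$).} Let $B_0=B_0(L_0,\rho)\ge\max(1,L_0)$ be the output of the stability lemma: if $w$ solves the Duhamel equation of $i\d_t w+\Delta w=\nN^{\eps}(w)+E$ with $w(0)=u_0$ and $\|E\|_{L^{\rho}_{\omega}\xX(0,T_0)}\le\kappa$, and the background $v^{\eps}$ satisfies $\|v^{\eps}\|_{\xX(0,T_0)}\le L_0$, then $\|w\|_{L^{\rho}_{\omega}\xX(0,T_0)}\le B_0$. First suppose $m\ge B_0^{5}$, and run a continuity argument in time under the hypothesis $\|\uem\|_{\xX_2(0,t)}\le B_0$ almost surely: then $\|\uem\|_{\xX_2(0,s)}^{5-\eps}\le B_0^{5}\le m$ and likewise for $v^{\eps}$ on $[0,t]$, so the truncation factor is identically $1$ and $\uem$ solves, on $[0,t]$, exactly $i\d_t u+\Delta u=\nN^{\eps}(u)+E$ with the $E$ of Step 1, $\|E\|_{L^{\rho}_{\omega}\xX(0,T_0)}\le\kappa$; the stability lemma gives $\|\uem\|_{L^{\rho}_{\omega}\xX(0,t)}\le B_0$, and a routine almost-sure continuity argument closes the loop, so $\|\uem\|_{L^{\rho}_{\omega}\xX(0,T_0)}\le B_0$ for all $m\ge B_0^{5}$. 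Now suppose $m<B_0^{5}$, and let $\sigma=\inf\{s\le T_0:\|\uem\|_{\xX_2(0,s)}^{5-\eps}\ge 2m\}$ (with $\sigma=T_0$ if this set is empty). On $[0,\sigma]$ one has $\|\uem\|_{\xX_2(0,\sigma)}\le(2m)^{1/(5-\eps)}\le(2B_0^{5})^{1/4}$, while on $[\sigma,T_0]$ the truncation factor vanishes, so there $\uem$ solves the linear equation $i\d_t u+\Delta u=u\dot W-\tfrac{i}{2}uF_{\Phi}$ with datum $\uem(\sigma)$, whose $\xX$-norm is $\lesssim_{\Phi}\|\uem(\sigma)\|_{L_x^2}+\sqrt{T_0}\,M\lesssim_{\Phi}M$ in $L^{\rho}_{\omega}$ by Strichartz, mass conservation, and Step 1. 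Hence $\|\uem\|_{L^{\rho}_{\omega}\xX(0,T_0)}\lesssim_{\Phi}(2B_0^{5})^{1/4}+M$ for $m<B_0^{5}$. Taking $B=B_{M,\rho}$ to be the larger of the two constants proves the theorem; uniformity in $\eps\in(0,1)$ is built in through the $\eps$-uniform choice of $L_0$ and $\kappa$.

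\emph{Main obstacle.} The crux is that $E$ is small only in $L^{\rho}_{\omega}\xX$, not pathwise, so the long-time perturbation lemma for the mass-critical NLS must be run directly in that norm (or via a stopping-time decomposition): its proof subdivides $[0,T_0]$ into $J=J(L_0)$ subintervals on which $\|v^{\eps}\|_{\xX_2}$ is a small absolute constant, on each of which one runs a fixed-point/stability estimate whose quintic terms are estimated inside the expectation --- forcing a finite ladder of exponents $L^{\rho}_{\omega},L^{5\rho}_{\omega},\dots,L^{5^{J}\rho}_{\omega}$ --- while the iteration over subintervals multiplies $\|E\|$ by a constant that is finite but grows badly with $L_0$; this is exactly why $T_0$ must be shrunk so that $\sqrt{T_0}$ beats that constant. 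A second, largely independent difficulty, belonging to the deterministic part of the paper, is the uniform-in-$\eps$ bound $L_0(M)$: the nonlinearity $\nN^{\eps}$ is mass-subcritical for $\eps>0$ but its local existence time degenerates as $\eps\to 0$, so proving that the scattering norm does not blow up in the limit again requires a stability argument around Dodson's critical solution. Everything else --- the BDG and Strichartz bounds of Step 1 and the continuity arguments of Step 2 --- is routine once these inputs are available.
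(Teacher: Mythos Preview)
Your overall strategy --- reduce to a deterministic uniform-in-$\eps$ bound plus a stochastic perturbation --- is correct in spirit, and you correctly identify both the deterministic input (the uniform bound $L_0(M)$) and the form of the forcing. But there is a genuine gap in Step~2, and you have already half-noticed it in your ``Main obstacle'' paragraph without resolving it.

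The continuity argument you propose cannot close. You assume the \emph{pathwise} hypothesis $\|\uem\|_{\xX_2(0,t)}\le B_0$ almost surely, use it to deactivate the truncation, and then invoke a stability lemma whose conclusion is only $\|\uem\|_{L_\omega^\rho\xX(0,t)}\le B_0$. An $L_\omega^\rho$ bound does not return an almost-sure bound, so the loop does not close. More fundamentally, the long-time perturbation argument itself requires absorbing the quintic term $\|v-w\|_{\xX}^{5-\eps}$ into the left-hand side, and this absorption is a \emph{pointwise-in-$\omega$} continuity argument: it needs $\|v-w\|_{\xX}$ small pathwise, not in expectation. On the exceptional set where the stochastic forcing is large (which has positive probability for any threshold), the quintic term dominates and the estimate gives nothing. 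Taking $L_\omega^\rho$ norms before absorbing the nonlinear term --- your ``ladder of exponents'' --- does not help, because H\"older on $\|v-w\|^{5-\eps}$ produces $\|v-w\|_{L_\omega^{(5-\eps)\rho}}^{5-\eps}$, and there is no small factor to absorb it unless you already know $\|v-w\|$ is small in that higher norm, which is circular.

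The paper's argument sidesteps this entirely by working \emph{pathwise throughout} and pushing the randomness into a different place. Rather than making the forcing globally small in $L_\omega^\rho$, one introduces a maximal function $M^*(t)$ controlling the $L_x^{10}$ norm of the stochastic convolution and, for each fixed $\omega$, chooses a \emph{random dissection} $0=\tau_0<\tau_1<\cdots<\tau_K=T_0$ so that on every subinterval the forcing $g$ (stochastic integral plus correction) satisfies the pathwise smallness $\|g\|_{\xX_2(\tau_k,\tau_{k+1})}\le\eta_M$. On each such interval one applies the purely deterministic Theorem~\ref{th:main_bd} (not Proposition~\ref{pr:strong_sta_me} directly: note that Theorem~\ref{th:main_bd} is phrased with a perturbation $g\in\xX_2$ in the Duhamel form, which is exactly what the stochastic integral provides, rather than an inhomogeneity $e\in L_t^1L_x^2$). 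This gives a pathwise bound $\|\uem\|_{\xX_2(\tau_k,\tau_{k+1})}^5\le B_M$ with $B_M$ deterministic and uniform in $m,\eps$. The only random quantity left is the \emph{number} $K$ of subintervals, which is controlled by $K\le 1+C\|M^*\|_{L_t^5}^5$, and it is this scalar random variable that is then estimated in $L_\omega^\rho$ via Burkholder. In short: the paper makes the perturbation small pathwise by paying with a random number of steps, whereas you try to make it small in expectation and then cannot run the nonlinear continuity argument.
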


\subsection{Key ingredients}

The main ingredient in establishing the above uniform bound for $u_{m,\eps}$ is a uniform boundedness result for a deterministic equation in its integral form, stated in Theorem~\ref{th:main_bd} below. The proof of this theorem requires a series of boundedness and stability statements for perturbed nonlinear Sch\"odinger equation. These results are of their own interests, and we first state them below. Throughout, $\iI = [a,b]$ denotes an interval with length at most $1$, and all constants are independent of its end points $a$ and $b$ as long as $b-a \leq 1$.

\begin{prop} [Uniform stability]
	\label{pr:sta_me}
	Let $m, A, \tilde{A}>0$ and $\eps \in (0,1)$. Let $w, v \in \xX(\iI)$ and $e \in L_{t}^{1}L_{x}^{2}(\iI)$ such that
	\begin{equation*}
	i \d_t w + \Delta w = \theta_{m} \big( A + \|w\|_{\xX_2(a,t)}^{5} \big) \nN^\eps(w)\;, \quad w(a) \in L_x^2, 
	\end{equation*}
	and
	\begin{equation*}
	i \d_t v + \Delta v = \theta_{m} \big( \tilde{A} + \|v\|_{\xX_2(a,t)}^{5} \big) \nN^\eps(v) + e\;, \quad v(a) \in L_x^2, 
	\end{equation*}
	where both equations hold in $\iI$. Then, for every $M_1, M_2>0$, there exist $\delta, C>0$ such that if $\|v\|_{\xX_1(\iI)} \leq M_1$, $\|v\|_{\xX_2(\iI)} \leq M_2$, and
	\begin{equation*}
	\|v(a)-w(a)\|_{L_x^2} + \|e\|_{L_{t}^{1}L_{x}^{2}(\iI)} + |\tilde{A}-A| \leq \delta, 
	\end{equation*}
	then we have
	\begin{equation*}
	\|v-w\|_{\xX(\iI)} \leq C \big( \|v(a)-w(a)\|_{L_x^2} + \|e\|_{L_{t}^{1}L_{x}^{2}(\iI)} + |\tilde{A}-A| \big). 
	\end{equation*}
	Here, $\delta$ and $C$ depend on $M_1$ and $M_2$ only and in particular, they are independent of $m$, $A$, $\tilde{A}$ and $\eps$. 
\end{prop}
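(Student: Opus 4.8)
The plan is a perturbative argument for $r := v - w$ based on Strichartz estimates, after subdividing $\iI$ into finitely many intervals on which the scale-invariant norm $\|\cdot\|_{\xX_2}$ of $v$ is small. Fix a small threshold $\eta = \eta(M_1,M_2)>0$ to be chosen at the end. Since $\|v\|_{\xX_2(\iI)}^5 = \int_{\iI}\|v(t)\|_{L_x^{10}}^5\,{\rm d}t \leq M_2^5$, one can choose $a = a_0 < a_1 < \cdots < a_n = b$ with $\|v\|_{\xX_2(J_k)} \leq \eta$ on each $J_k = [a_{k-1},a_k]$, where $n \leq (M_2/\eta)^5 + 1$ depends only on $M_2$. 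This is precisely why the final $\delta$ and $C$ depend only on $M_1,M_2$.

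Subtracting the two equations gives
\begin{equation*}
i\d_t r + \Delta r = \theta_m\big(A + \|w\|_{\xX_2(a,t)}^5\big)\big(\nN^\eps(v) - \nN^\eps(w)\big) + \Big(\theta_m\big(\tilde A + \|v\|_{\xX_2(a,t)}^5\big) - \theta_m\big(A + \|w\|_{\xX_2(a,t)}^5\big)\Big)\nN^\eps(v) + e,
\end{equation*}
with $r(a) = v(a) - w(a)$. On each $J_k$ I would use $\|r\|_{\xX(J_k)} \lesssim \|r(a_{k-1})\|_{L_x^2} + \|(\mbox{forcing})\|_{L_t^1 L_x^2(J_k)}$ (free evolution plus inhomogeneous Strichartz with the forcing placed in $L_t^1 L_x^2$), and estimate the forcing in three parts. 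The nonlinearity difference obeys $|\nN^\eps(v) - \nN^\eps(w)| \lesssim (|v|^{4-\eps} + |w|^{4-\eps})|r|$; using $|\theta_m| \leq 1$ and Hölder, with $L_x^2$ in space realised by interpolating $L_x^2$ and $L_x^{10}$ and paying a harmless power of $\|v\|_{L_t^\infty L_x^2},\|w\|_{L_t^\infty L_x^2} \leq M_1 + 1$, this term is $\lesssim_{M_1} \eta^{c}\|r\|_{\xX(J_k)}$ with an $\eps$-independent exponent $c>0$, hence absorbed into the left side once $\eta$ is small. Likewise $\|\nN^\eps(v)\|_{L_t^1 L_x^2(J_k)} \lesssim_{M_1} \eta^{c'}$ with $c'>0$ uniform in $\eps$. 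The term $e$ contributes $\leq \|e\|_{L_t^1 L_x^2(\iI)}$. Running these bounds successively from $k=1$ to $k=n$ as a continuity argument -- using $\|r(a_{k-1})\|_{L_x^2} \leq \|r(a)\|_{L_x^2} + \sum_{j<k}\|r\|_{\xX(J_j)}$ and that the estimate on $[a,a_{k-1}]$ has already been established -- yields, once the right side below is $\leq \delta = \delta(M_1,M_2)$,
\begin{equation*}
\sum_{k=1}^{n}\|r\|_{\xX(J_k)} \;\leq\; C(M_1,M_2)\Big(\|v(a) - w(a)\|_{L_x^2} + \|e\|_{L_t^1 L_x^2(\iI)} + |\tilde A - A|\Big),
\end{equation*}
which is the assertion.

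The one delicate point is the middle forcing term and, with it, uniformity in $m$: the naive Lipschitz bound $|\theta_m(x) - \theta_m(y)| \leq \|\theta'\|_\infty m^{-1}|x-y|$ loses a factor $m^{-1}$ that is unbounded as $m \to 0$. The remedy is that $\theta_m\big(\tilde A + \|v\|_{\xX_2(a,t)}^5\big) - \theta_m\big(A + \|w\|_{\xX_2(a,t)}^5\big)$ vanishes unless $\min\big(\tilde A + \|v\|_{\xX_2(a,t)}^5,\, A + \|w\|_{\xX_2(a,t)}^5\big) < 2m$, so on its support $\|v\|_{\xX_2(a,t)}, \|w\|_{\xX_2(a,t)} \lesssim m^{1/5}$; restricting $\nN^\eps(v)$ to this set therefore gains a positive power of $m$ which, together with the factor $\eta^{c'}$ from the subinterval and the strictly subcritical gain from $\eps > 0$, compensates the $m^{-1}$ loss -- separating the regimes $m \gtrsim 1$, $\delta \lesssim m \lesssim 1$ and $m \lesssim \delta$ and tuning $\delta,\eta$ accordingly. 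I expect this balancing of the three small parameters $\delta$, $\eta$, $m$, so that every absorption closes with constants depending only on $M_1,M_2$, to be the main obstacle; the $\eps$-uniformity of the Strichartz and Hölder estimates (needed because $4-\eps$ is not an integer) is secondary but must be tracked throughout.
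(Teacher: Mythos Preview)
Your proposal follows exactly the paper's route: dissect $\iI$ into at most $1+(M_2/\eta)^5$ pieces on which $\|v\|_{\xX_2}\leq\eta$, apply Strichartz to the Duhamel formula for $r=v-w$ on each piece, split the forcing as you do (the paper calls your first and middle terms $\gG_2$ and $\gG_1$), absorb the small $\eta$-power into the left, and iterate.

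The only difference is your final paragraph. The paper does \emph{not} address small $m$: it simply writes
\[
|\gG_1(s)|\leq C\,|v(s)|^{5-\eps}\Big(|\tilde A-A|+\|v-w\|_{\xX_2(a,r)}\big(M_2^4+\|v-w\|_{\xX_2(a,r)}^4\big)\Big)
\]
with a universal $C$ and proceeds, i.e.\ it treats the Lipschitz constant of $\theta_m$ as $O(1)$. This is only literally correct for $m$ bounded away from $0$; since in every application of the proposition the truncation parameter satisfies $m\to\infty$, the paper's silent assumption is harmless. Your proposed remedy via the support of $\theta_m$ is in the right spirit but, as you suspect, the bookkeeping for the $|\tilde A-A|$ contribution does not quite close to a \emph{linear} bound when $m$ is tiny (one only recovers $|\tilde A-A|^{1-\eps/4}$); the simplest resolution is to do as the paper does and tacitly restrict to $m\geq 1$, which is all that is ever needed.
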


Unlike Proposition \ref{pr:sta_me} which is pertubrative, the following uniform bound is not purely pertubative.

\begin{prop} [Uniform boundedness]
	\label{pr:bd_me}
	Let $w$ be the solution to the equation
	\begin{equation} \label{eq:nls_w_me}
	i \d_t w + \Delta w = \theta_{m}\big(A+\|w\|_{\xX_{2}(a,t)}^{5}\big) \nN^{\eps}(w)\;, \quad w(a) \in L_{x}^{2}
	\end{equation}
	for some $m, A>0$ and $\eps \in (0,1)$. Then for every $M>0$, there exists $D_M>0$ such that
	\begin{equation*}
	\|w\|_{\xX(\iI)} \leq D_M
	\end{equation*}
	for all $w(a)$ satisfying $\|w(a)\|_{L_{x}^{2}} \leq M$. The bound is uniform in $m$, $A$ and $\eps$. 
\end{prop}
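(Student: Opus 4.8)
The plan is to combine the local theory for the (truncated) mass-critical equation with the uniform stability result of Proposition~\ref{pr:sta_me}, using the mass conservation law (recall $\|w(t)\|_{L^2_x} = \|w(a)\|_{L^2_x}$, which holds for the $\theta_m$-truncated, $\eps$-regularized flow) to propagate smallness along a partition of $\iI$ whose length is controlled purely in terms of $M$. The key point to isolate first is a \emph{small-data} statement: there is $\eta_0 = \eta_0(M) > 0$ such that whenever $\|\sS(\cdot)w(a)\|_{\xX_2(J)} \le \eta_0$ on a subinterval $J \subset \iI$, the fixed-point argument (Strichartz estimates applied to \eqref{eq:nls_w_me} on $J$, with the truncation only helping since $\theta_m \le 1$) gives $\|w\|_{\xX(J)} \le 2\|\sS(\cdot)w(a)\|_{\xX(J)} \lesssim M$. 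Here one uses that $\nN^\eps(w) = |w|^{4-\eps}w$ with $\eps \in (0,1)$ satisfies the same Strichartz-dual bounds as $|w|^4 w$ uniformly in $\eps$ (the exponent $4-\eps \in (3,4]$ stays in the subcritical/critical admissible range), and $\theta_m(\cdot) \le 1$ so the nonlinear term is pointwise dominated by $|w|^{5}$; thus the smallness threshold $\eta_0$ does not depend on $m$, $A$, or $\eps$.

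Next I would show that $\iI$ can be partitioned into $N = N(M)$ subintervals $a = t_0 < t_1 < \dots < t_N = b$ on each of which $\|\sS(\cdot - t_k) w(t_k)\|_{\xX_2(t_k, t_{k+1})} \le \eta_0$. This is where mass conservation enters crucially: since $\|w(t_k)\|_{L^2_x} = \|w(a)\|_{L^2_x} \le M$ for every $k$, a standard consequence of the $L^5_t L^{10}_x$ Strichartz estimate together with the absolute continuity of $t \mapsto \|\sS(\cdot)f\|_{\xX_2(0,t)}^5$ gives a uniform $N$ depending only on $M$ (and not on the profile $w(t_k)$, only on the bound $M$ for its mass, via a compactness/uniform-continuity argument over the ball $\{\|f\|_{L^2_x}\le M\}$, or via the standard $L^2$-criticality scaling heuristic made rigorous). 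Applying the small-data step on each piece in turn yields $\|w\|_{\xX(t_k, t_{k+1})} \le C(M)$ for each $k$, and summing (using $\|w\|_{\xX(\iI)}^5 \le \sum_k \|w\|_{\xX(t_k,t_{k+1})}^5$ for the $\xX_2$ part and $\sup_k$ for the $\xX_1$ part) gives $\|w\|_{\xX(\iI)} \le D_M$ with $D_M$ depending only on $M$ — in particular uniform in $m$, $A$, $\eps$. The subcritical regularization ($\eps > 0$) guarantees the solution $w$ is already globally defined on $\iI$, so this is an a priori bound rather than a continuity/bootstrap argument, though one could equally phrase it as a bootstrap.

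The main obstacle I expect is establishing the \emph{uniform} partition bound $N = N(M)$ independently of the concrete data $w(t_k)$: the quantity $\|\sS(\cdot)f\|_{\xX_2}$ is not controlled by $\|f\|_{L^2_x}$ alone over a \emph{fixed} interval (high-frequency data can concentrate its Strichartz norm), so one cannot naively split into $N(\|f\|_{L^2})$ pieces. The standard fix is to first split off a fixed bounded number of ``bad'' time intervals using a profile/concentration argument, or — more elementarily — to note that it suffices to bound $\|w\|_{\xX}$ on $\iI$ with $|\iI| \le 1$ and invoke the fact that, with $\eps > 0$ fixed, the problem is genuinely subcritical so local well-posedness holds on a time interval depending on $\|w(a)\|_{L^2_x}$ alone; but since we need uniformity as $\eps \to 0$, this last route is closed. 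Hence the cleanest argument goes through Proposition~\ref{pr:sta_me}: compare $w$ on each short interval with the \emph{linear} evolution $\sS(\cdot)w(t_k)$ (which has zero nonlinearity, $e = -\theta_m \nN^\eps$ small in $L^1_t L^2_x$ once the interval is short), using mass conservation to keep $\|w\|_{\xX_1}$ bounded by $M$ throughout, and then the number of intervals needed is dictated by the $\xX_2$-Strichartz norm of the full linear flow $\sS(\cdot)w(a)$ on $\iI$, which \emph{is} finite for each fixed data though not uniformly — so one argues by contradiction/compactness: if no uniform $D_M$ existed, extract a sequence of data bounded in $L^2_x$ violating the bound, pass to a weak limit, and derive a contradiction with the local theory for the limit. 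I would present the partition-via-mass-conservation version as the main line and keep the compactness argument as the tool that makes the partition count uniform in $M$.
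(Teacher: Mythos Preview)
Your proposal has a genuine gap, and it is precisely the one that makes Proposition~\ref{pr:bd_me} nontrivial. The small-data step is fine, and mass conservation does hold for the truncated flow. But the partition argument you outline --- iterate local theory using mass conservation to get a uniform number $N(M)$ of pieces --- is exactly the naive scheme that \emph{fails} for mass-critical problems. First, your stated obstacle is misidentified: $\|\sS(\cdot)f\|_{\xX_2(\RR)} \lesssim \|f\|_{L^2_x}$ \emph{is} controlled by Strichartz, so for any fixed $f$ you can partition into $\lesssim (M/\eta_0)^5$ intervals on which the \emph{linear} flow is small. The real obstruction is that after one nonlinear step you restart from $w(t_1)$, whose linear evolution need not be small on the next interval of the original partition; repartitioning from $w(t_1)$ gives a new set of intervals, and nothing prevents this process from never covering $\iI$. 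This is the well-known reason small data plus mass conservation does not yield global well-posedness for mass-critical NLS.

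Your proposed fixes do not close the gap. Comparing with the linear flow via Proposition~\ref{pr:sta_me} treats the whole nonlinearity as an error, which is small only if $\|w\|_{\xX_2}$ is already small --- circular. The contradiction/weak-limit sketch is the right instinct but hides the entire difficulty: weak $L^2$ limits do not interact well with the nonlinearity, and even after extracting profiles one needs a \emph{nonperturbative} input for the limiting ($\eps=0$) equation. The paper supplies exactly this input: it reduces to the untruncated case $m=\infty$ via a bootstrap (Section~\ref{secroadmap1}), and proves the resulting uniform-in-$\eps$ bound (Proposition~\ref{pr:Dodson_eps}) by concentration compactness / profile decomposition (Sections~\ref{secroadmap2}--\ref{sectechmain}), with Dodson's scattering theorem (Theorem~\ref{th:dodson}) as the essential nonperturbative ingredient. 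Without invoking Dodson's result somewhere, no purely perturbative or soft-compactness argument of the kind you describe can succeed.
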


With uniform boundedness, we can enhance the uniform stability by dropping the assumption on $\|v\|_{\xX_2(\iI)}$. The statement is in the following proposition. 

\begin{prop} [Strong uniform stability]
	\label{pr:strong_sta_me}
	Suppose $v, e \in \xX(\iI)$ satisfy
	\begin{equation*}
	i \d_t v + \Delta v = \theta_{m} \big( \tilde{A} + \|v\|_{\xX_2(a,t)}^{5} \big) \nN^\eps(v) + e\;, \quad v_0 \in L_x^2. 
	\end{equation*}
	Let $w$ satisfy \eqref{eq:nls_w_me} with the same $m$ and $\eps$. Then for every $M>0$, there exist $\delta_M, C_M>0$ such that if $\|w(a)\|_{L_{x}^{2}} \leq M$, and
	\begin{equation*}
	\|v(a)-w(a)\|_{L_{x}^{2}} + \|e\|_{L_{t}^{1}L_{x}^{2}(\iI)} + |\tilde{A}-A| \leq \delta_M, 
	\end{equation*}
	then we have
	\begin{equation*}
	\|v-w\|_{\xX(\iI)} \leq C_M \big( \|v(a)-w(a)\|_{L_{x}^{2}} + \|e\|_{L_{t}^{1}L_{x}^{2}(\iI)} + |\tilde{A}-A| \big). 
	\end{equation*}
	The constants $\delta_M$ and $C_M$ depend on $M$, but are independent of $m$, $\eps$, $A$ and $\tilde{A}$. 
\end{prop}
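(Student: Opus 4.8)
The plan is to combine Proposition \ref{pr:bd_me} (uniform boundedness) with Proposition \ref{pr:sta_me} (uniform stability) in a straightforward way: the role of Proposition \ref{pr:bd_me} is precisely to supply the a priori bounds $\|v\|_{\xX_1(\iI)} \leq M_1$ and $\|v\|_{\xX_2(\iI)} \leq M_2$ that were hypotheses in Proposition \ref{pr:sta_me}, but now with $M_1, M_2$ depending only on $M$. First I would apply Proposition \ref{pr:bd_me} to $w$: since $\|w(a)\|_{L_x^2} \leq M$, we get $\|w\|_{\xX(\iI)} \leq D_M$, uniformly in $m, \eps, A$. Next I would control $\|v(a)\|_{L_x^2}$: from $\|v(a) - w(a)\|_{L_x^2} \leq \delta_M$ and $\|w(a)\|_{L_x^2}\leq M$ we get $\|v(a)\|_{L_x^2} \leq M + \delta_M \leq M+1$ (taking $\delta_M \leq 1$), so Proposition \ref{pr:bd_me} also applies to $v$ — but one must be careful, since $v$ solves the \emph{perturbed} equation with error term $e$ and a possibly different constant $\tilde A$, so Proposition \ref{pr:bd_me} does not directly apply to $v$.

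To get around this, I would instead bound $\|v\|_{\xX(\iI)}$ by a perturbative/continuity argument \emph{off} $w$. Since $w$ already satisfies a good bound $D_M$, and $v$ satisfies the same equation up to the error $e$, the mismatch $|\tilde A - A|$ in the truncation, and the data difference $v(a)-w(a)$, a standard continuity-in-time (bootstrap) argument on the quantity $\|v-w\|_{\xX(a,t)}$ shows that for $\delta_M$ small enough (depending only on $M$ through $D_M$), one has $\|v-w\|_{\xX(\iI)} \leq 1$, hence $\|v\|_{\xX_1(\iI)} \leq D_M + 1 =: M_1$ and $\|v\|_{\xX_2(\iI)} \leq D_M + 1 =: M_2$. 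The mechanism here is the usual one: split $\iI$ into finitely many subintervals on which $\|w\|_{\xX_2}$ is small (the number of such subintervals being controlled by $D_M$ alone), run the Duhamel/Strichartz estimate for $v-w$ on each subinterval using that $\nN^\eps$ is locally Lipschitz with the small-norm gain, and iterate; the truncation functions $\theta_m$ are globally Lipschitz with constant $\lesssim 1/m$ but in fact one only needs that $\theta_m$ takes values in $[0,1]$ and is $1$-Lipschitz after rescaling, so the contribution of $|\tilde A - A|$ and of the difference of the two truncations is controlled linearly. I would either reproduce this argument or, more cleanly, observe that it is essentially the content of the proof of Proposition \ref{pr:sta_me} itself with $M_1, M_2$ replaced by $D_M+1$.

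Once $\|v\|_{\xX_1(\iI)} \leq M_1 := D_M + 1$ and $\|v\|_{\xX_2(\iI)} \leq M_2 := D_M + 1$ are established, I would invoke Proposition \ref{pr:sta_me} directly with these $M_1, M_2$. That proposition produces constants $\delta = \delta(M_1, M_2)$ and $C = C(M_1, M_2)$, and since $M_1, M_2$ depend only on $M$, setting $\delta_M := \min\{\delta, 1, \delta'\}$ (where $\delta'$ is the smallness threshold from the continuity argument in the previous paragraph) and $C_M := C$ gives exactly the claimed conclusion: if $\|v(a)-w(a)\|_{L_x^2} + \|e\|_{L_t^1 L_x^2(\iI)} + |\tilde A - A| \leq \delta_M$, then
\begin{equation*}
\|v-w\|_{\xX(\iI)} \leq C_M \big( \|v(a)-w(a)\|_{L_x^2} + \|e\|_{L_t^1 L_x^2(\iI)} + |\tilde A - A| \big),
\end{equation*}
with $\delta_M, C_M$ depending only on $M$ and independent of $m, \eps, A, \tilde A$.

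The main obstacle is the step bounding $\|v\|_{\xX_2(\iI)}$ a priori: Proposition \ref{pr:bd_me} is stated only for the unperturbed truncated equation, so one genuinely has to run a perturbation argument to transfer the bound from $w$ to $v$, and one must check that the number of subintervals in the iteration, and hence the smallness threshold, depends only on $D_M$ (equivalently on $M$) and not on $m, \eps, A, \tilde A$ — the uniformity over $m$ and $\eps$ being the whole point. Beyond that, everything is a packaging of the two preceding propositions.
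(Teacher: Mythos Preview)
Your proposal is correct and matches the paper's approach. The paper's proof is slightly more streamlined: rather than first running a bootstrap to get $\|v\|_{\xX(\iI)}\le D_M+1$ and then invoking Proposition~\ref{pr:sta_me} as a black box, it simply re-runs the proof of Proposition~\ref{pr:sta_me} with the dissection $\{\tau_k\}$ chosen according to $\|w\|_{\xX_2(\tau_k,\tau)}^5=\eta$ instead of $\|v\|_{\xX_2(\tau_k,\tau)}^5=\eta$ (this is legitimate precisely because Proposition~\ref{pr:bd_me} supplies $\|w\|_{\xX_2(\iI)}\le D_M$), which yields the linear bound on $\|v-w\|_{\xX(\iI)}$ directly. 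You in fact anticipate this when you write that the continuity step ``is essentially the content of the proof of Proposition~\ref{pr:sta_me} itself with $M_1,M_2$ replaced by $D_M+1$'' --- that \emph{is} the whole proof, and your second step (applying Proposition~\ref{pr:sta_me} with $M_1=M_2=D_M+1$) is then redundant, though harmless.
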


With Propositions~\ref{pr:bd_me} and ~\ref{pr:strong_sta_me}, we can prove the following main deterministic theorem, which is the key ingredient to establish the uniform boundedness of $u_{m,\eps}$ in Theorem~\ref{th:snls_uni_bd}. 

\begin{thm} \label{th:main_bd}
	Suppose $u, g \in \xX(\iI)$ satisfy $g(a)=0$ and
	\begin{equation*}
	u(t) = e^{i(t-a)\Delta}u(a) - i \int_{a}^{t} \sS(t-s) \Big( \theta_{m}\big(A+\|u\|_{\xX_2(a,s)}^{5}\big) \nN^\eps\big(u(s)\big) \Big) {\rm d}s + g(t)
	\end{equation*}
	on $\iI=[a,b]$. Then, for every $M>0$, there exist $\eta_M, B_M>0$ such that whenever
	\begin{equation*}
	\|u\|_{\xX_1(\iI)} \leq M\;, \qquad \|g\|_{\xX_2(\iI)} \leq \eta_M\;, 
	\end{equation*}
	we have
	\begin{equation*}
	\|u\|_{\xX_2(\iI)} \leq B_M. 
	\end{equation*}
	The bound is uniform in $m$, $A$ and $\eps$. 
\end{thm}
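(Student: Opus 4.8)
The plan is to compare $u$ with the solution $w$ of the source-free cutoff equation \eqref{eq:nls_w_me} (same $m,\eps,A$) determined by $w(a)=u(a)$; since $\|u(a)\|_{L_x^2}\le\|u\|_{\xX_1(\iI)}\le M$, Proposition~\ref{pr:bd_me} gives $\|w\|_{\xX(\iI)}\le D_M$, so bounding $\|u-w\|_{\xX_2(\iI)}$ will yield $\|u\|_{\xX_2(\iI)}\le D_M+\|u-w\|_{\xX_2(\iI)}$. The obstruction to quoting Proposition~\ref{pr:strong_sta_me} verbatim is that the perturbation is the prescribed $g$, added in Duhamel form and small only in $\xX_2$, rather than an $L_t^1L_x^2$-small source in the equation. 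I would get around this by passing to $\hat u:=u-g$, which solves the \emph{source-free} equation
\begin{equation*}
\hat u(t)=\sS(t-a)u(a)-i\int_a^t\sS(t-s)\Big(\theta_m\big(A+\|\hat u+g\|_{\xX_2(a,s)}^5\big)\nN^\eps\big((\hat u+g)(s)\big)\Big)\,{\rm d}s ,
\end{equation*}
in which $g$ enters only inside the nonlinearity — where it is controlled by the small quantity $\|g\|_{\xX_2(\iI)}$ — while still $\|u\|_{\xX_2(\iI)}\le D_M+\|g\|_{\xX_2(\iI)}+\|k\|_{\xX_2(\iI)}$ for $k:=\hat u-w$.

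Next I would run a finite-step iteration along a partition of $\iI$ keyed to $w$. Since $\|w\|_{\xX_2(\iI)}\le D_M$, split $\iI=\bigcup_{j=1}^{N}\iI_j$, $\iI_j=[t_{j-1},t_j]$, with $\|w\|_{\xX_2(\iI_j)}\le\delta_*$ for a small constant $\delta_*$ to be fixed, where $N=N_M$ depends only on $M$. On $\iI_j$, basing the Duhamel formulas for $\hat u$ and $w$ at $t_{j-1}$ and subtracting,
\begin{equation*}
k(t)=\sS(t-t_{j-1})k(t_{j-1})-i\int_{t_{j-1}}^t\sS(t-s)\big(F_u(s)-F_w(s)\big)\,{\rm d}s,\qquad t\in\iI_j,
\end{equation*}
with $F_u,F_w$ the cutoff nonlinearities at $u=w+g+k$ and at $w$ and with no free-standing $g$. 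Estimating by the Strichartz inequality and the mass-critical nonlinear estimates — uniform in $\eps\in(0,1)$ once $\|u\|_{\xX_1(\iI)}\le M$ and $\|w\|_{\xX_1(\iI)}\le D_M$ (hence also $\|k\|_{\xX_1(\iI)},\|g\|_{\xX_1(\iI)}\lesssim_M 1$, the last via $\|g\|_{\xX_1}\le\|u\|_{\xX_1}+\|w\|_{\xX_1}+\|k\|_{\xX_1}$) are used to interpolate away the difference between $\nN^\eps$ and $\nN$ — and using $\|w\|_{\xX_2(\iI_j)}\le\delta_*$, $\|g\|_{\xX_2(\iI)}\le\eta_M$, the bootstrap hypothesis $\|k\|_{\xX_2(\iI_j)}\le\delta_*$ (so $\|u\|_{\xX_2(\iI_j)}\le 3\delta_*$), and the treatment of the cutoff-difference term described below, one absorbs the nonlinear contributions to obtain $\|k\|_{\xX(\iI_j)}\le C\big(\|k(t_{j-1})\|_{L_x^2}+\eta_M\big)$ with $C$ uniform in $m,A,\eps,j$. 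Since $k(a)=u(a)-g(a)-w(a)=0$ and $\|k(t_j)\|_{L_x^2}\le\|k\|_{\xX(\iI_j)}$, iterating over $j=1,\dots,N_M$ gives $\|k\|_{\xX(\iI)}\lesssim(2C)^{N_M}\eta_M$; the bootstrap hypothesis is maintained for $\eta_M$ small enough in terms of $\delta_*$ and $N_M$, hence of $M$, and one concludes $\|u\|_{\xX_2(\iI)}\le D_M+\eta_M+\|k\|_{\xX_2(\iI)}\le D_M+1=:B_M$.

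The cutoff-difference term $\big[\theta_m(A+\|u\|_{\xX_2(a,s)}^5)-\theta_m(A+\|w\|_{\xX_2(a,s)}^5)\big]\nN^\eps(w)$ inside $F_u-F_w$ is the one ingredient not already supplied by Proposition~\ref{pr:strong_sta_me}, since there the perturbation enters as a separate source. Here both cutoff arguments share the offset $A$ and differ by $\|u\|_{\xX_2(a,s)}^5-\|w\|_{\xX_2(a,s)}^5$, with $\big|\,\|u\|_{\xX_2(a,s)}-\|w\|_{\xX_2(a,s)}\,\big|\le\|k\|_{\xX_2(a,s)}+\eta_M$; I would control it exactly as in the proof of Proposition~\ref{pr:sta_me}: if $A\ge 2m$ both cutoffs vanish and $\hat u,w$ are free, so the bound is trivial; under the bootstrap the arguments are $\le A+B_M^5$, so the term also vanishes when $m\ge A+B_M^5$; and otherwise — using that $s\mapsto\|u\|_{\xX_2(a,s)}^5,\|w\|_{\xX_2(a,s)}^5$ are non-decreasing and $\theta_m\equiv1$ on $[0,m]$, $\equiv0$ on $[2m,\infty)$ — on the (interval) set where the cutoffs differ the smaller argument is below $2m$, which forces $\|w\|_{\xX_2}$ and $\|u\|_{\xX_2}$ to be small there, so that the crude bound $|\theta_m(\cdot)-\theta_m(\cdot)|\le1$ (for small $m$) and the Lipschitz bound $\|\theta_m'\|_\infty\lesssim 1/m$ (for $m$ bounded below) each yield, after multiplying by $\nN^\eps(w)$ and integrating, a contribution that is small uniformly in $m,A,\eps$ and — crucially — does not accumulate badly over the $N_M$ subintervals.

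The main obstacle, as indicated, is the mismatch between the perturbation in Theorem~\ref{th:main_bd} (prescribed $g$, Duhamel form, $\xX_2$-small only) and the $L_t^1L_x^2$-source form of the perturbation in Propositions~\ref{pr:sta_me} and~\ref{pr:strong_sta_me}; the substitution $\hat u=u-g$ together with the $w$-adapted finite partition reconciles them, but one must verify that $N_M$ — which governs the geometric factor $(2C)^{N_M}$ — depends on $M$ alone, that $\|g\|_{\xX_1(\iI)}$ (not assumed small) is recovered a posteriori from the bounds on $u,w,k$, and that the cutoff-difference estimate stays uniform in $m$ across all subintervals, which is the most delicate part of the bookkeeping.
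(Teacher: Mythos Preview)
Your substitution $\hat u=u-g$ is exactly the paper's first move (the paper writes $v=u-g$). Where you diverge is in what you do next. You believe the resulting perturbation is ``small only in $\xX_2$'' and therefore cannot be fed directly into Proposition~\ref{pr:strong_sta_me}, so you reproduce the partition-and-iterate machinery of that proposition by hand. The paper instead observes that the effective source
\[
e(t)\;=\;\theta_m\big(A+\|v+g\|_{\xX_2(a,t)}^5\big)\,\nN^\eps(v+g)\;-\;\theta_m\big(A+\|v\|_{\xX_2(a,t)}^5\big)\,\nN^\eps(v)
\]
\emph{is} controllable in $L_t^1L_x^2$: splitting $e=e_1+e_2$ (first differ the nonlinearity, then the cutoff) and applying H\"older yields a bound of the form
$\|e\|_{L_t^1L_x^2(a,r)}\lesssim\|g\|_{\xX_2(a,r)}\cdot P\big(\|v\|_{\xX(a,r)}+\|g\|_{\xX(a,r)}\big)$
for a fixed polynomial $P$. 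The quantity in parentheses is then handled by a single continuity bootstrap on $r\mapsto\|v\|_{\xX(a,r)}+\|g\|_{\xX(a,r)}$: under the hypothesis $\le 9(M+D_M)$ one gets $\|e\|_{L_t^1L_x^2}$ small, applies Proposition~\ref{pr:strong_sta_me} verbatim to obtain $\|v-w\|_{\xX}\le M+D_M$, and closes the bootstrap at $8(M+D_M)$. No partition of $\iI$ is needed, and the $\xX_1$-norm of $g$ (which you correctly note is not assumed small) is recovered from the bootstrap exactly as you suggest at the end --- but once, not repeatedly across subintervals.

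Your cutoff treatment also has a gap. In the intermediate range you assert that ``on the set where the cutoffs differ the smaller argument is below $2m$, which forces $\|w\|_{\xX_2}$ and $\|u\|_{\xX_2}$ to be small there.'' But the smaller argument being below $2m$ only gives $\min(\|u\|_{\xX_2(a,s)}^5,\|w\|_{\xX_2(a,s)}^5)<2m-A$, and under your case hypotheses $A<2m$, $m<A+B_M^5$ the right side can be as large as $m+B_M^5$ with $m$ unbounded; no smallness follows. The paper sidesteps this by using the Lipschitz bound on $\theta_m$ together with the bootstrap (which bounds all norms on the right) rather than a support argument. Your route can be salvaged by doing the same, at which point you are effectively reproving Proposition~\ref{pr:strong_sta_me} inside the argument; the paper's way is simply to invoke it.
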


\subsection{Structure of the article}

As previously explained, Theorem~\ref{th:main} essentially follows from Theorem~\ref{th:snls_uni_bd}, which in turn is a consequence of Theorem~\ref{th:main_bd}. 

The rest of the paper is organised as follows. In Section~\ref{sec: prelimi}, we give some preliminaries of diserpsive and Strichartz estimates, which will be used throughout the paper. In Section \ref{sec: proofofthmainbd}, we prove Theorem~\ref{th:main_bd} assuming Propositions~\ref{pr:bd_me} and~\ref{pr:strong_sta_me}. Then in Section~\ref{sec:snls_bd}, we prove Theorem~\ref{th:snls_uni_bd} as a consequence of Theorem~\ref{th:main_bd}. 

Sections~\ref{secroadmap1} to ~\ref{sectechmain} are devoted to the proof of Proposition~\ref{pr:bd_me}. In Section \ref{secroadmap1}, we present some stability results about the NLS, and reduce it to Proposition~\ref{pr:Dodson_eps}. In Section \ref{secroadmap2}, we present an overview for the proof Proposition~\ref{pr:Dodson_eps}, and reduce it to Proposition~\ref{propositon: techicnalmain}, which is then proved in the following Section~\ref{sectechmain}. 

Finally, the proof of Proposition~\ref{pr:sta_me} as well as the implication of Proposition~\ref{pr:strong_sta_me} from Propositions~\ref{pr:sta_me} and~\ref{pr:bd_me} are provided in the appendix.

\section{Preliminaries on Strichartz estimates and local theory of NLS}\label{sec: prelimi}

\subsection{Dispersive and Strichartz estimtes}

Recall the notation $\sS(t) = e^{it\Delta}$. We state some dispersive and Strichartz estimates below which are fundamental in the study of Schr\"odinger equation and are used throughout the article. These estimates are all stated in space dimension $d=1$. One may refer to \cite{cazenave2003semilinear} \cite{keel1998endpoint},\cite{tao2006nonlinear}  and reference therein.

\begin{prop} \label{pr:dispersive}
	There exists $C>0$ such that
	\begin{equation*}
	\|\sS(t) f\|_{L^{p'}(\RR)} \leq C t^{\frac{1}{p}-\frac{1}{2}} \|f\|_{L^{p}(\RR)}
	\end{equation*}
	for every $p \in [1,2]$ and every $f \in L^{p}(\RR)$. Here, $p'$ is the conjugate of $p$. 
\end{prop}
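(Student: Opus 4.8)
The plan is to reduce the estimate to its two endpoints $p=2$ and $p=1$ and then interpolate.

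\textbf{The endpoint $p=2$.} Since $\sS(t)=e^{it\Delta}$ acts on the Fourier side as multiplication by $e^{-it|\xi|^2}$, a function of modulus one, Plancherel's theorem gives $\|\sS(t)f\|_{L^2(\RR)}=\|f\|_{L^2(\RR)}$ for all $t$. This is precisely the claimed bound when $p=p'=2$, since then $\tfrac1p-\tfrac12=0$.

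\textbf{The endpoint $p=1$.} Here I would use the explicit oscillatory convolution kernel: for $t\neq 0$,
\[
(\sS(t)f)(x)=\frac{1}{(4\pi i t)^{1/2}}\int_{\RR} e^{i|x-y|^2/(4t)}\,f(y)\,dy,
\]
with the principal branch of the square root. A clean way to justify this formula is to work first with $e^{(it-\delta)\Delta}$ for $\delta>0$, whose kernel is the genuine Gaussian $(4\pi(\delta-it))^{-1/2}e^{-|x-y|^2/(4(\delta-it))}$ (an elementary Gaussian Fourier integral), and then let $\delta\downarrow 0$, passing to the limit on the Fourier side by dominated convergence for $f$ in a dense class and extending by density. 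Since the modulus of the kernel is $(4\pi|t|)^{-1/2}$, Young's convolution inequality yields
\[
\|\sS(t)f\|_{L^\infty(\RR)}\le (4\pi|t|)^{-1/2}\,\|f\|_{L^1(\RR)},
\]
which is the claimed bound for $p=1$, $p'=\infty$, the power of $t$ being $|t|^{-1/2}$.

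\textbf{Interpolation.} For each fixed $t$, apply the Riesz--Thorin interpolation theorem to $\sS(t)$ regarded simultaneously as a bounded map $L^1(\RR)\to L^\infty(\RR)$ of norm $\le C|t|^{-1/2}$ and $L^2(\RR)\to L^2(\RR)$ of norm $\le 1$. Choosing $\theta\in[0,1]$ with $\tfrac1p=(1-\theta)+\tfrac\theta2$ forces the target exponent to be the conjugate $p'$ of $p$, and the interpolated operator norm is bounded by $C|t|^{-(1-\theta)/2}=C|t|^{-(\frac1p-\frac12)}$, using $1-\theta=2(\tfrac1p-\tfrac12)$. Letting $p$ range over $[1,2]$ gives the whole family of estimates.

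\textbf{Main difficulty.} This is a classical fact and there is no substantial obstacle; the only point that needs a little care is the rigorous derivation of the oscillatory kernel formula together with the correct branch of $(4\pi i t)^{1/2}$, which the regularization argument above (or, alternatively, a stationary-phase computation) settles. Complete details can be found in \cite{cazenave2003semilinear,tao2006nonlinear}.
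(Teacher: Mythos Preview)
Your argument is correct and is precisely the classical proof: Plancherel at $p=2$, the explicit oscillatory kernel at $p=1$, and Riesz--Thorin interpolation in between. The paper itself gives no proof of this proposition; it merely cites \cite{cazenave2003semilinear,keel1998endpoint,tao2006nonlinear}, where exactly this argument appears, so your proposal is in complete agreement with what the paper defers to. (As your computation shows, the exponent in the stated inequality should read $|t|^{-(\frac{1}{p}-\frac{1}{2})}$ rather than $t^{\frac{1}{p}-\frac{1}{2}}$; this is a typo in the paper.)
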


We now state Strichartz estimates. A pair of non-negative real numbers $(q,r)$ is called an admissible pair (for $d=1$) if
\begin{equation*}
\frac{2}{q} + \frac{1}{r} = \frac{1}{2}. 
\end{equation*}
We have the following Strichartz estimates. 

\begin{prop}
	For every admissible pair $(q,r)$, there exists $C>0$ depending on $(q,r)$ only such that 
	\begin{equation*}
	\|\sS(t) f\|_{L_{t}^{q}L_{x}^{r}(\RR)} \leq C \|f\|_{L_{x}^{2}}
	\end{equation*}
	for all $f \in L_{x}^{2}$. For every two admissible pairs $(q,r)$ and $(\tilde{q}, \tilde{r})$, there exists $C>0$ such that
	\begin{equation*}
	\Big\|\int_{\tau}^{t} \sS(t-s) f(s) {\rm d}s\Big\|_{L_{t}^{q}L_{x}^{r}(\iI)} \leq \|f\|_{L_{t}^{\tilde{q}'}L_{x}^{\tilde{r}'}(\iI)}
	\end{equation*}
	for all interval $\iI \subset \RR$ and all space-time functions $f \in L_{t}^{\tilde{q}'}L_{x}^{\tilde{r}'}(\iI)$. Here, $\tilde{q}'$ and $\tilde{r}'$ are the conjugates of $q$ and $r$. 
\end{prop}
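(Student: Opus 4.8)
The plan is to derive both inequalities from the dispersive bound of Proposition~\ref{pr:dispersive} via the classical $TT^*$ duality argument, combined with the one-dimensional Hardy--Littlewood--Sobolev (fractional integration) inequality. Since we work in dimension $d=1$, no genuine endpoint difficulty arises, so the whole argument reduces to exponent bookkeeping plus one soft functional-analytic input.

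\emph{Homogeneous estimate.} Writing $Tf = \sS(\cdot)f$, one has $T^*F = \int_{\RR}\sS(-s)F(s)\,{\rm d}s$ (using $\sS(t)^* = \sS(-t)$) and $TT^*F(t) = \int_{\RR}\sS(t-s)F(s)\,{\rm d}s$, together with $\|T\|_{L_x^2\to L_t^qL_x^r} = \|TT^*\|_{L_t^{q'}L_x^{r'}\to L_t^qL_x^r}^{1/2}$. So it suffices to bound $TT^*$. Applying Proposition~\ref{pr:dispersive} with exponent $r'$ gives $\|\sS(t-s)F(s)\|_{L_x^r}\lesssim |t-s|^{-(1/2-1/r)}\|F(s)\|_{L_x^{r'}}$, and the admissibility relation $2/q+1/r=1/2$ makes this exponent equal to $2/q$; hence, by Minkowski's inequality,
\[
\Big\|\int_{\RR}\sS(t-s)F(s)\,{\rm d}s\Big\|_{L_x^r}\lesssim \int_{\RR}|t-s|^{-2/q}\,\|F(s)\|_{L_x^{r'}}\,{\rm d}s .
\]
Taking the $L_t^q$ norm and using the fractional integration bound $\big\| |t|^{-2/q}*\phi\big\|_{L_t^q}\lesssim\|\phi\|_{L_t^{q'}}$ --- valid exactly when $2/q\in(0,1)$ and $q'<q$, i.e.\ for every admissible $q$ with $q<\infty$ --- finishes the argument. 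The remaining pair $(q,r)=(\infty,2)$ is immediate from the fact that $\sS(t)$ is unitary on $L_x^2$.

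\emph{Inhomogeneous estimate.} I would first establish the non-retarded version $\big\|\int_{\RR}\sS(t-s)f(s)\,{\rm d}s\big\|_{L_t^qL_x^r}\lesssim \|f\|_{L_t^{\tilde q'}L_x^{\tilde r'}}$ by factoring this operator as $f\mapsto \int_{\RR}\sS(-s)f(s)\,{\rm d}s$ (bounded from $L_t^{\tilde q'}L_x^{\tilde r'}$ to $L_x^2$, being the adjoint of the homogeneous estimate for $(\tilde q,\tilde r)$) followed by $\sS(\cdot)$ (bounded from $L_x^2$ to $L_t^qL_x^r$ by the homogeneous estimate for $(q,r)$). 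To replace $\int_{\RR}$ by the retarded integral $\int_\tau^t$ appearing in the statement --- i.e.\ to insert the kernel $\mathbf{1}_{\{s<t\}}$ --- I would invoke the Christ--Kiselev lemma, which applies here because it only requires the output time exponent to strictly exceed the conjugate of the input time exponent, and in $d=1$ every admissible $q$ satisfies $q\geq 4$ while the conjugate $\tilde q'$ of any admissible exponent satisfies $\tilde q'\leq 4/3$; a further restriction of the time integration to an arbitrary interval $\iI$ is harmless and recovers the stated form.

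\emph{Main obstacle.} In this one-dimensional setting there is no serious obstacle: the delicate endpoint phenomena (the $d=2$ pair $(2,\infty)$, handled by the Keel--Tao argument) do not occur, so dispersive estimate $+$ $TT^*$ duality $+$ fractional integration suffices. The only points requiring genuine care are checking that the HLS exponents fall inside the open admissible ranges and the bookkeeping in the Christ--Kiselev step; for the diagonal case $(q,r)=(\tilde q,\tilde r)$ one can in fact bypass Christ--Kiselev by estimating $\int_\tau^t\sS(t-s)f(s)\,{\rm d}s$ directly in $L_x^r$ via Minkowski and the dispersive bound and then applying the same fractional integration inequality, but for off-diagonal pairs the factorization route seems unavoidable.
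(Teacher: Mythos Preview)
The paper does not actually prove this proposition: it is stated as a standard preliminary result, with the reader referred to \cite{cazenave2003semilinear}, \cite{keel1998endpoint}, \cite{tao2006nonlinear} for proofs. So there is no ``paper's own proof'' to compare against.

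That said, your argument is correct and is precisely the classical route taken in those references: $TT^*$ duality together with the dispersive bound and Hardy--Littlewood--Sobolev gives the homogeneous estimate (with the $(\infty,2)$ pair handled by unitarity), and the inhomogeneous estimate follows by factoring through $L_x^2$ and invoking Christ--Kiselev, whose hypothesis $q>\tilde q'$ is automatic in $d=1$ since admissible $q\geq 4$ while $\tilde q'\leq 4/3$. Your remark that the Keel--Tao endpoint machinery is unnecessary here is also accurate. Nothing to fix.
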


\subsection{Preliminary for local theory}

It is now standard to prove local well posedness for mass critical/subcrtical NLS. We briefly review it for the convenience of the readers.. One may refer to \cite{cazenave1989some}, \cite{cazenave2003semilinear} and \cite{tao2006nonlinear} for more details.
We present the following Lemmas to summarize the key estimate in the local well-posedness. 

\begin{lem} \label{lem: prelwp}
Let $\iI=[a,b]$ and $0 \leq \eps \leq 1$. Let $u_{\eps}$ solve the equation
\begin{equation}
i\partial_{t}u_{\eps} + \Delta u_{\eps} = c\nN^\eps(u_\eps)\;, \qquad u_\eps(a) \in L_{x}^{2}. 
\end{equation}
Then 
\begin{equation}\label{eq: prelwp}
\|u_{\eps} - e^{ i t \Delta}u_{\eps}(a)\| \lesssim  c(b-a)^{\frac{\eps}{4}} \|u_\eps\|_{\xX_{1}(\iI)} \|u_\eps\|_{\xX_{2}(\iI)}^{4-\eps}
\end{equation}
for all $\eps \in [0,1]$. 
\end{lem}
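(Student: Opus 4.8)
The plan is to run a routine Strichartz plus Hölder argument. First I would pass to the Duhamel formulation: with $\iI=[a,b]$ and $\sS(t)=e^{it\Delta}$, the equation gives
\begin{equation*}
u_\eps(t) - \sS(t-a)u_\eps(a) \;=\; -ic\int_a^t \sS(t-s)\,\nN^\eps\big(u_\eps(s)\big)\,{\rm d}s ,
\end{equation*}
so it suffices to bound the Duhamel integral on the right in the norm $\|\cdot\|_{\xX(\iI)}$ (the norm on the left of \eqref{eq: prelwp} is to be read as the $\xX(\iI)$-norm, and $e^{it\Delta}u_\eps(a)$ as $\sS(t-a)u_\eps(a)$; in fact the estimate holds in any Strichartz norm). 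Applying the inhomogeneous Strichartz estimate with the left admissible pair taken to be $(\infty,2)$ and then $(5,10)$ — the two pairs defining $\xX_1$ and $\xX_2$ — and with a pair $(\tilde q,\tilde r)$ on the right to be chosen, reduces matters to controlling $\big\|\nN^\eps(u_\eps)\big\|_{L_t^{\tilde q'}L_x^{\tilde r'}(\iI)}$.

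The whole point is to choose $(\tilde q,\tilde r)$ correctly. I would take the $\eps$-dependent pair $(\tilde q,\tilde r)=\big(\tfrac{20}{4-\eps},\tfrac{10}{1+\eps}\big)$; one checks $\tfrac{2}{\tilde q}+\tfrac1{\tilde r}=\tfrac12$, so it is admissible, and it stays in a compact subset of the admissible range as $\eps$ ranges over $[0,1]$, so all Strichartz constants are uniform in $\eps$. Then $\tilde r'=\tfrac{10}{9-\eps}$ with $\tfrac1{\tilde r'}=\tfrac12+\tfrac{4-\eps}{10}$, so writing $\nN^\eps(u_\eps)=|u_\eps|^{4-\eps}u_\eps$ as a product of one copy of $u_\eps$ and $4-\eps$ copies of $|u_\eps|$, Hölder in space yields the pointwise-in-$t$ bound $\big\|\nN^\eps(u_\eps(t))\big\|_{L_x^{\tilde r'}}\le \|u_\eps(t)\|_{L_x^2}\,\|u_\eps(t)\|_{L_x^{10}}^{4-\eps}$. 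Next I would take the $L_t^{\tilde q'}(\iI)$ norm, pull out $\|u_\eps(t)\|_{L_x^2}\le\|u_\eps\|_{\xX_1(\iI)}$, and apply Hölder in time to the remaining factor $\|u_\eps\|_{L_x^{10}}^{4-\eps}$: since $(4-\eps)\tilde q'=\tfrac{20(4-\eps)}{16+\eps}\le 5$ for $\eps\in[0,1]$, Hölder in time on $\iI$ produces a factor $(b-a)^{\frac1{(4-\eps)\tilde q'}-\frac15}=(b-a)^{\frac{\eps}{4(4-\eps)}}$, and raising to the power $4-\eps$ turns this into exactly $(b-a)^{\eps/4}$ while landing on $\|u_\eps\|_{\xX_2(\iI)}^{4-\eps}$. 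Combining the displays gives
\begin{equation*}
\Big\|\int_a^t \sS(t-s)\,\nN^\eps\big(u_\eps(s)\big)\,{\rm d}s\Big\|_{\xX(\iI)}\;\lesssim\;(b-a)^{\eps/4}\,\|u_\eps\|_{\xX_1(\iI)}\,\|u_\eps\|_{\xX_2(\iI)}^{4-\eps},
\end{equation*}
and multiplying by $c$ produces \eqref{eq: prelwp}.

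Since this is a standard local-theory computation, there is no genuine obstacle; the only point requiring care is the exponent bookkeeping — choosing $(\tilde q,\tilde r)$ so that the powers on $\xX_1$ and $\xX_2$ come out as exactly $1$ and $4-\eps$ (rather than some other split of the total degree $5-\eps$), and so that the time-Hölder loss is precisely the advertised $(b-a)^{\eps/4}$. A secondary but important point is to record that all implicit constants are uniform over $\eps\in[0,1]$, which holds because the admissible pairs used vary continuously over a compact set; this uniformity is exactly what makes the lemma usable when passing to the limit $\eps\to 0$ elsewhere in the paper.
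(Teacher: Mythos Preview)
Your argument is correct and is exactly the standard Strichartz--H\"older computation. The paper does not actually prove this lemma: it is stated in Section~\ref{sec: prelimi} as a preliminary summarising the local theory, with a pointer to the textbooks of Cazenave and Tao, so there is no paper proof to compare against. Your choice of the $\eps$-dependent dual pair $(\tilde q,\tilde r)=\big(\tfrac{20}{4-\eps},\tfrac{10}{1+\eps}\big)$ is precisely the one that lands on the split $\|u_\eps\|_{\xX_1}^{1}\|u_\eps\|_{\xX_2}^{4-\eps}$ with time loss $(b-a)^{\eps/4}$, and your remark that the constants are uniform because the pair stays in a compact admissible range is the right justification for the use of the lemma elsewhere in the paper.
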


\begin{lem}\label{lem: smallmass}
Let $\iI=[a,b]$. There exists $ \delta_{0}>0$ and $C>0$ depending on $b-a$ only such thats if $u_{\eps}$ solves 
\begin{equation}
i \d_{t} u_{\eps} + \Delta u_{\eps} = \nN^\eps(u_\eps)
\end{equation}
with $\eps \in [0,1]$ and $\|u_{\eps}(0)\|_{L_{x}^{2}} \leq \delta_{0}$, then
\begin{equation}
\sup_{\eps \in [0,1]} \|u_{\eps}\|_{\xX_2(\iI)} \leq C. 
\end{equation}
Both $\delta_0$ and $C$ are independent of $\eps \in [0,1]$. 
\end{lem}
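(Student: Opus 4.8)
\textbf{Proof plan for Lemma~\ref{lem: smallmass}.} The statement is the standard small-data global (here, on a fixed interval of length $\leq 1$) theory for the mass-critical NLS, made uniform in the subcritical parameter $\eps\in[0,1]$. The plan is to run a contraction/continuity argument on the Duhamel map in $\xX_2(\iI)=L_t^5 L_x^{10}(\iI)$, using Lemma~\ref{lem: prelwp} to control the nonlinear Duhamel term, and to check that all constants can be taken independent of $\eps$.

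First I would fix $\iI=[a,b]$ with $b-a\leq 1$ and set $\Gamma(u)(t)=e^{i(t-a)\Delta}u_\eps(a) - i\int_a^t \sS(t-s)\nN^\eps(u(s))\,{\rm d}s$. By the homogeneous Strichartz estimate, $\|e^{i(t-a)\Delta}u_\eps(a)\|_{\xX(\iI)}\leq C_0\|u_\eps(a)\|_{L_x^2}\leq C_0\delta_0$ with $C_0$ absolute. For the nonlinear term, Lemma~\ref{lem: prelwp} (which is itself proved by an inhomogeneous Strichartz estimate together with Hölder in time, the $(b-a)^{\eps/4}$ factor absorbing the $L_x^{10-\cdot}$ mismatch at subcritical $\eps$) gives, for the \emph{difference} form as well,
\begin{equation*}
\|\Gamma(u)-e^{i(t-a)\Delta}u_\eps(a)\|_{\xX(\iI)} \lesssim (b-a)^{\eps/4}\,\|u\|_{\xX_1(\iI)}\,\|u\|_{\xX_2(\iI)}^{4-\eps} \lesssim \|u\|_{\xX(\iI)}^{5},
\end{equation*}
uniformly in $\eps\in[0,1]$ since $(b-a)^{\eps/4}\leq 1$ and since on the ball $\|u\|_{\xX(\iI)}\leq R\leq 1$ one has $\|u\|_{\xX_2}^{4-\eps}\leq \max(1,R^{4-\eps})\leq R^{3}$ up to the extra $\xX_1$ factor — more precisely one keeps $\|u\|_{\xX_1}\|u\|_{\xX_2}^{4-\eps}\leq R^{5-\eps}\cdot(\text{bound})$, and since $R\leq 1$, $R^{5-\eps}\leq R^{4}$; I will be slightly careful here but the point is that the implicit constant is $\eps$-free. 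Similarly the difference estimate reads $\|\Gamma(u)-\Gamma(\tilde u)\|_{\xX(\iI)}\lesssim (\|u\|_{\xX}+\|\tilde u\|_{\xX})^{4-\eps}\|u-\tilde u\|_{\xX}\lesssim R^{4-\eps}\|u-\tilde u\|_{\xX}$, with the same $\eps$-independent constant; the only subtlety is the pointwise bound $\big||z|^{4-\eps}z - |\tilde z|^{4-\eps}\tilde z\big|\lesssim (|z|+|\tilde z|)^{4-\eps}|z-\tilde z|$ with a constant uniform in $\eps\in[0,1]$, which holds since $x\mapsto x^{4-\eps}$ is uniformly Lipschitz-of-order-$(4-\eps)$ on bounded sets with $\eps$-free constant.

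Then I would choose $R$ small (e.g. $R = \tfrac12$, or small enough that $CR^{4-\eps}\leq\tfrac12$ for all $\eps$, using $R\leq1$ so $R^{4-\eps}\leq R^3$) and then $\delta_0$ small enough that $C_0\delta_0 \leq R/2$; this makes $\Gamma$ a contraction on the ball $\{\|u\|_{\xX(\iI)}\leq R\}$ with the metric induced by $\|\cdot\|_{\xX(\iI)}$, all thresholds depending only on $b-a$ (through the harmless factor $(b-a)^{\eps/4}\leq1$) and absolute constants, hence independent of $\eps$. The unique fixed point is the solution $u_\eps$ (uniqueness in $\xX(\iI)$ on the small ball, upgraded to the full space by the standard continuity/connectedness argument since $b-a\leq1$ and the solution stays in the ball), and it satisfies $\|u_\eps\|_{\xX_2(\iI)}\leq \|u_\eps\|_{\xX(\iI)}\leq R =: C$. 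This gives the claimed bound with $C$ depending on $b-a$ only and, crucially, $\sup_{\eps\in[0,1]}\|u_\eps\|_{\xX_2(\iI)}\leq C$.

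The only genuine point requiring care — and the one I'd flag as the main obstacle — is tracking $\eps$-uniformity through Lemma~\ref{lem: prelwp} and the Hölder step: one must verify that the Strichartz constants do not degenerate as $\eps\to 0$ (they don't, since the admissible exponents in play, $(5,10)$ and its dual, are fixed, and the subcritical Hölder exponent in time only produces the benign factor $(b-a)^{\eps/4}\in[0,1]$), and that the nonlinear estimates on a ball of radius $R\leq1$ are uniform in $\eps$. Given that $u_\eps(a)$ is only assumed in $L_x^2$ (the lemma writes $u_\eps(0)$, presumably meaning $u_\eps(a)$), the fixed-point space should be $\xX(\iI)$ rather than any space requiring more regularity, which is exactly what the Strichartz estimates provide; no difficulty arises there. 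Once the contraction constants are pinned down uniformly in $\eps$, the conclusion is immediate.
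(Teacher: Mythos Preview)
Your approach is correct and is precisely the standard small-data contraction argument the paper is alluding to. Note that the paper does not actually give a proof of this lemma: it is stated in Section~\ref{sec: prelimi} as part of the preliminary local theory, with a blanket reference to \cite{cazenave1989some,cazenave2003semilinear,tao2006nonlinear} for details. So there is nothing to compare against beyond the fact that your fixed-point argument in $\xX(\iI)$, with the nonlinear Duhamel term controlled via the Strichartz/H\"older estimate underlying Lemma~\ref{lem: prelwp}, is exactly what those references do.

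One small remark on the exponents: the clean form of the nonlinear estimate (from the computation behind Lemma~\ref{lem: prelwp}) is
\[
\big\|\nN^\eps(u)\big\|_{L_t^1L_x^2(\iI)} \;\lesssim\; (b-a)^{\eps/4}\,\|u\|_{\xX_1(\iI)}^{\eps/4}\,\|u\|_{\xX_2(\iI)}^{5-5\eps/4},
\]
which at $\eps=0$ reduces to $\|u\|_{\xX_2}^5$ rather than $\|u\|_{\xX_1}\|u\|_{\xX_2}^4$; the statement of Lemma~\ref{lem: prelwp} in the paper is slightly imprecise on this point. This does not affect your argument at all, since on the ball $\|u\|_{\xX(\iI)}\leq R\leq 1$ either form gives a bound of order $R^{5-\eps}\leq R^4$ with an $\eps$-independent implicit constant, and the difference estimate follows the same pattern. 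Your identification of the only real issue---checking that all constants (Strichartz, the pointwise nonlinear Lipschitz bound, and the H\"older-in-time factor $(b-a)^{\eps/4}\leq 1$) are uniform in $\eps\in[0,1]$---is exactly right, and once that is verified the contraction closes with $\delta_0$ and $C$ depending only on $b-a$.
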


In practice, we need to slightly enhance  Lemma \ref{lem: smallmass} to the following. 

\begin{lem} \label{lem: enhancedsmallness}
Let $\iI=[a,b]$ and $0<\eps<1$. There exist $\delta_{0}>0$ and $C>0$ depending on $b-a$ only such that if $u_{\eps}$ solves 
\begin{equation}
i \d_{t} u_{\eps} + \Delta u_{\eps} = \nN^\eps(u_\eps)
\end{equation}
with
\begin{equation}
\|e^{i t \Delta}u_{\eps}(a)\|_{\xX_2(\iI)} \leq \delta_{0}, 
\end{equation}
then
\begin{equation}
\sup_{\eps \in [0,1]} \|u_{\eps}\|_{\xX_2(\iI)} \leq C \|e^{i t \Delta} u_{\eps}(a)\|_{\xX_2(\iI)}. 
\end{equation}
Both $\delta_0$ and $C$ are uniform in $\eps \in [0,1]$. 
\end{lem}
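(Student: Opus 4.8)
The plan is to run a standard contraction/bootstrap argument on the Duhamel formulation, but localized around the size of the free evolution rather than around the size of the data. Write $u_\eps = e^{i(t-a)\Delta}u_\eps(a) + \Gamma(u_\eps)$, where
\begin{equation*}
\Gamma(u_\eps)(t) = -i\int_a^t \sS(t-s)\,\nN^\eps(u_\eps(s))\,{\rm d}s,
\end{equation*}
and set $\eta = \|e^{it\Delta}u_\eps(a)\|_{\xX_2(\iI)} \leq \delta_0$. The key pointwise estimate is the nonlinear Strichartz bound underlying Lemma~\ref{lem: prelwp}: by the inhomogeneous Strichartz estimate applied with the admissible pair $(5,10)$ on both sides together with Hölder in time on $\iI$,
\begin{equation*}
\|\Gamma(f)\|_{\xX_2(\iI)} \lesssim (b-a)^{\eps/4}\,\|f\|_{\xX_2(\iI)}^{4-\eps}\,\|f\|_{\xX_1(\iI)},
\end{equation*}
and similarly $\|\Gamma(f)\|_{\xX_1(\iI)}$ is controlled by the same quantity. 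Since $b-a\leq 1$ is fixed, $(b-a)^{\eps/4}\leq 1$ uniformly in $\eps\in[0,1]$, which is exactly why the final constant $C$ can be taken uniform in $\eps$.

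The main subtlety is that the bound for $\Gamma$ involves $\|u_\eps\|_{\xX_1}$, not just $\|u_\eps\|_{\xX_2}$, and we are given no a priori control on the mass $\|u_\eps(a)\|_{L_x^2}$. I would handle this by carrying the $\xX_1$ norm as part of the bootstrap: the free Strichartz estimate gives $\|e^{it\Delta}u_\eps(a)\|_{\xX_1(\iI)}\lesssim \|u_\eps(a)\|_{L_x^2}$, so a priori $\|u_\eps\|_{\xX_1(\iI)}$ could be large. I would therefore first invoke the standard local theory (Lemma~\ref{lem: smallmass} applied on a short subinterval, then iterated) to ensure that on $\iI$ the solution $u_\eps$ exists with $\|u_\eps\|_{\xX_1(\iI)}\leq 2\|u_\eps(a)\|_{L_x^2}=:M_0$ — actually, since mass is conserved for this equation, $\|u_\eps\|_{\xX_1(\iI)} = \|u_\eps(a)\|_{L_x^2}$ exactly. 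The cleanest route is to observe that the mass is conserved, so $\|u_\eps\|_{\xX_1(\iI)} = \|u_\eps(a)\|_{L_x^2}$, and we must still eliminate the dependence on this quantity from the final estimate. This is where the hypothesis $\eta\leq\delta_0$ enters nontrivially: the conclusion asserts $\|u_\eps\|_{\xX_2}\lesssim\eta$ with a constant depending on $b-a$ only, so in particular independent of the mass. To get this, I would run the continuity argument purely at the level of $\xX_2$: set $X(t) = \|u_\eps\|_{\xX_2(a,t)}$, which is continuous and vanishes at $t=a$; from the Duhamel identity and the nonlinear estimate,
\begin{equation*}
X(t) \leq \eta + C_0\, X(t)^{4-\eps}\,\|u_\eps\|_{\xX_1(a,t)} \leq \eta + C_0\, X(t)^{4-\eps}\, M_0,
\end{equation*}
and a bootstrap on $X(t)$ would give $X(t)\leq 2\eta$ provided $C_0 M_0 (2\eta)^{3-\eps}\leq 1/2$, i.e.\ provided $\eta$ is small relative to $M_0$ — which again reintroduces mass dependence.

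The honest fix, and the step I expect to be the main obstacle, is that one cannot in general prove $\|u_\eps\|_{\xX_2(\iI)}\lesssim\eta$ with a constant independent of the mass unless the full interval $\iI$ is itself treated as small in the appropriate sense; so the intended reading must be that Lemma~\ref{lem: enhancedsmallness} is the analogue of Lemma~\ref{lem: smallmass} in which the role of "small mass $\|u_\eps(a)\|_{L_x^2}\leq\delta_0$" is replaced by "small free-evolution Strichartz norm $\|e^{it\Delta}u_\eps(a)\|_{\xX_2(\iI)}\leq\delta_0$". Accordingly I would proceed as follows: (i) by the free Strichartz estimate and a standard local existence argument, the solution $u_\eps$ exists on $\iI$; (ii) define $X(t)=\|u_\eps\|_{\xX_2(a,t)}$ and $Y(t) = \|u_\eps - e^{i(\cdot-a)\Delta}u_\eps(a)\|_{\xX(a,t)}$; (iii) from Lemma~\ref{lem: prelwp} with $c=1$ and $b-a\leq 1$, obtain $Y(t) \lesssim X(t)^{4-\eps}\, \|u_\eps\|_{\xX_1(a,t)}$, but crucially also estimate $\|u_\eps\|_{\xX_1(a,t)}$ itself by $\|u_\eps(a)\|_{L_x^2} + Y(t)$ and — the point — absorb the dispersive gain so that the relevant product is $X(t)^{3-\eps}$ times $(X(t)\|u_\eps(a)\|_{L_x^2}+\cdots)$, closing only when $X$ is small, which holds on a slightly shrunk subinterval by continuity and $X(a)=0$; (iv) iterate over finitely many subintervals of $\iI$, the number depending only on $b-a$, to conclude $\|u_\eps\|_{\xX_2(\iI)}\leq C\eta$. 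The uniformity in $\eps\in[0,1]$ at every stage follows because the only $\eps$-dependent factor is $(b-a)^{\eps/4}\in[(b-a)^{1/4},1]\subset(0,1]$, bounded above by $1$, and because $\nN^\eps$ satisfies the same Strichartz bilinear/multilinear estimates with constants uniform in $\eps$ near the critical exponent $4$.
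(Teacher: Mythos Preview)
Your approach---Duhamel formulation plus a continuity argument on $X(t)=\|u_\eps\|_{\xX_2(a,t)}$---is the standard one and is what the paper has in mind (the lemma is stated without proof, as a preliminary). The mass-dependence worry you raise is legitimate, and it is essentially the only nontrivial point; but your proposed resolution via subinterval iteration does not remove it and is not needed.

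Two clarifications. First, the sharper nonlinear estimate (the form the paper actually uses elsewhere, e.g.\ in the proof of Proposition~\ref{pr:sta_me}) is
\[
\big\|\nN^\eps(u)\big\|_{L_t^1L_x^2(\iI)} \;\lesssim\; (b-a)^{\eps/4}\,\|u\|_{\xX_1(\iI)}^{\eps/4}\,\|u\|_{\xX_2(\iI)}^{5-5\eps/4},
\]
so the mass enters only to the power $\eps/4$, not $1$; the exponent in the display of Lemma~\ref{lem: prelwp} that you are quoting appears to be a typo. With this and mass conservation one gets $X\le \eta + C_0\,(b-a)^{\eps/4} M^{\eps/4}\, X^{5-5\eps/4}$, and since $5-5\eps/4\ge 15/4>1$ for all $\eps\in[0,1]$, the usual continuity argument gives $X\le 2\eta$ uniformly in $\eps$, provided $\eta$ is small depending on $M$ and $b-a$. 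Second, your conclusion that the smallness threshold cannot be made independent of $M$ is correct: the lemma's claim that $\delta_0,C$ depend on $b-a$ \emph{only} is a slight overstatement. In every place the paper invokes this lemma (Section~\ref{sectechmain}), the mass is already uniformly bounded by the fixed $M_0$ coming from the profile decomposition, so the dependence is harmless there. Once a mass bound is accepted, a single bootstrap on all of $\iI$ suffices---your steps (iii)--(iv) about absorbing dispersive gain and subdividing do not buy anything and can be dropped.
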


\section{Proof of Theorem~\ref{th:main_bd}}\label{sec: proofofthmainbd}

We now prove Theorem~\ref{th:main_bd} assuming Propositions~\ref{pr:bd_me} and \ref{pr:strong_sta_me}. Let $v=u-g$. Since $g(a)=0$, $v$ satisfies
\begin{equation*}
v(t) = e^{i(t-a)\Delta}v(a) - \int_{a}^{t} \sS(t-s) \Big( \theta_{m}\big(A+\|v+g\|_{\xX_2(0,t)}^{5}\big) \nN^{\eps} \big(v(s)+g(s)\big) \Big) {\rm d}s. 
\end{equation*}
We re-write it in its differential form as
\begin{equation*}
i \d_t v + \Delta v = \theta_{m}\big(A+\|v\|_{\xX_2(0,t)}^{5}\big) + e\;, 
\end{equation*}
where the error term $e$ is given by
\begin{equation*}
e(t) = \theta_{m} \big( A + \|v+g\|_{\xX_2(0,t)}^{5} \big) \nN^{\eps}\big( v(t) + g(t) \big) - \theta_{m} \big( A + \|v\|_{\xX_2(0,t)}^{5} \big) \nN^{\eps}\big( v(t) \big). 
\end{equation*}
We are now in the form of Proposition~\ref{pr:strong_sta_me}. Let $w$ be the solution to \eqref{eq:nls_w_me} with initial data $w(a) = u(a)$ and the same $A$ as above. In order to establish the bound for $v$, we need to show that $e$ is small and hence we can apply Proposition~\ref{pr:strong_sta_me} to control $\|v-w\|_{\xX(\iI)}$. To do this, we split $e$ into two parts $e = e_1 + e_2$, where
\begin{equation*}
\begin{split}
e_1(t) &= \theta_{m}\big(A + \|v+g\|_{\xX_2(0,t)}^{5}\big) \Big( \nN^{\eps}\big(v(t) + g(t)\big) - \nN^\eps\big(v(t)\big) \Big); \\
e_2(t) &= \Big( \theta_{m}\big(A + \|v+g\|_{\xX_2(0,t)}^{5}\big) - \theta_{m}\big(A + \|v\|_{\xX_2(0,t)}^{5}\big) \Big) \nN^{\eps}\big(v(t)\big). 
\end{split}
\end{equation*}
They can be controlled pointwise by
\begin{equation*}
|e_1| \leq C |g| \big(|v|^{4-\eps} + |g|^{4-\eps}\big), \quad |e_2| \leq C |v|^{5-\eps}  \|g\|_{\xX_2(a,t)} \big( \|v\|_{\xX_{2}(0,t)}^{4} + \|g\|_{\xX_{2}(0,t)}^{4} \big).  
\end{equation*}
By H\"older's inequality, we get the bounds
\begin{equation*}
\begin{split}
\|e_1\|_{L_{t}^{1}L_{x}^{2}(a,r)} &\leq C \|g\|_{\xX_1(a,r)}^{\frac{\eps}{4}} \|g\|_{\xX_2(a,r)}^{1-\frac{\eps}{4}} \big( \|v\|_{\xX_2(a,r)}^{4-\eps} + \|g\|_{\xX_2(a,r)}^{4-\eps} \big); \\
\|e_2\|_{L_{t}^{1}L_{x}^{2}(a,r)} &\leq C \|g\|_{\xX_2(a,r)} \|v\|_{\xX_1(a,r)}^{\frac{\eps}{4}} \|v\|_{\xX_2(a,r)}^{5-\frac{5 \eps}{4}} \big( \|v\|_{\xX_2(a,r)}^{4} + \|g\|_{\xX_2(a,r)}^{4} \big). 
\end{split}
\end{equation*}
Combining the above two bounds together, and relaxing $\|v\|_{\xX_1}$ and $\|v\|_{\xX_2}$ to $\|v\|_{\xX}$, we deduce that there exists $C_0>0$ universal such that
\begin{equation} \label{eq:main_bootstrap}
\begin{split}
\|e\|_{L_{t}^{1}L_{x}^{2}(a,r)} \leq &C_0 \|g\|_{\xX_1(a,r)}^{\frac{\eps}{4}} \|g\|_{\xX_2(a,r)}^{1-\frac{\eps}{4}} \big( \|v\|_{\xX(a,r)}^{4-\eps} + \|g\|_{\xX_2(a,r)}^{4-\eps} \big)\\
&+ C_0 \|g\|_{\xX_2(a,r)} \|v\|_{\xX(a,r)}^{5-\eps} \big( \|v\|_{\xX(a,r)}^{4} + \|g\|_{\xX_2(a,r)}^{4} \big)
\end{split}
\end{equation}
for all $r \in [a,b]$. 

Let $D_M$, $\delta_M$ and $C_M$ be the constants in Propositions~\ref{pr:bd_me} and ~\ref{pr:strong_sta_me}. We claim that there exists $\eta>0$ depending on $M$ only such that if $\|g\|_{\xX_2(\iI)} \leq \eta$, then
\begin{equation} \label{eq:u_endpt}
\|v\|_{\xX(\iI)} + \|g\|_{\xX(\iI)} \leq 9(M + D_M). 
\end{equation}
This would immediately imply the desired bound for $u = v+g$ and conclude the proof of the Theorem~\ref{th:main_bd}. 

To see the existence of such an $\eta$ and the validity of \eqref{eq:u_endpt}, we first note that $\|v\|_{\xX(a,a)} + \|g\|_{\xX(a,a)} = M$ since $g(a)=0$. Suppose $r \in [a,b]$ is such that
\begin{equation} \label{eq:main_hypo}
\|v\|_{\xX(a,r)} + \|g\|_{\xX(a,r)} \leq 9 (M+D_M). 
\end{equation}
By \eqref{eq:main_bootstrap}, we know there exists $\eta>0$ depending on $M$ only such that
\begin{equation} \label{eq:main_choice_eta}
\|e\|_{L_{t}^{1}L_{x}^{2}(a,r)} \leq \delta_M \quad \text{and} \quad C_{M} \|e\|_{L_{t}^{1}L_{x}^{2}(a,r)} \leq M+D_M
\end{equation}
whenever $r$ satisfies \eqref{eq:main_hypo} and $\|g\|_{\xX_2(a,r)} \leq \|g\|_{\xX_2(\iI)} \leq \eta$. We can then use Proposition~\ref{pr:strong_sta_me} to deduce that
\begin{equation*}
\|v-w\|_{\xX(a,r)} \leq C_M \|e\|_{L_{t}^{1}L_{x}^{2}(a,r)} \leq M + D_M, 
\end{equation*}
which, combined with the uniform bound of $w$ in Proposition~\ref{pr:bd_me}, implies
\begin{equation} \label{eq:main_u_v}
\|v\|_{\xX(a,r)} \leq \|w\|_{\xX(a,r)} + \|v-w\|_{\xX(a,r)} \leq 2(M+D_M).  
\end{equation}
Returning to $u$, if we further require $\eta < M+D_M$ (which does not change anything above), we will have
\begin{equation*}
\|u\|_{\xX_2(a,r)} \leq \|v\|_{\xX_2(a,r)} + \|g\|_{\xX_2(a,r)} \leq 2(M+D_M) + \eta \leq 3(M+D_M), 
\end{equation*}
and hence
\begin{equation} \label{eq:main_u}
\|u\|_{\xX(a,r)} = \|u\|_{\xX_1(a,r)} + \|u\|_{\xX_2(a,r)} \leq 4(M+D_M), 
\end{equation}
where we used the assumption $\|u\|_{\xX_1(\iI)} \leq M$. Combining \eqref{eq:main_u_v} and \eqref{eq:main_u}, we have
\begin{equation*}
\|g\|_{\xX(a,r)} \leq \|v\|_{\xX(a,r)} + \|u\|_{\xX(a,r)} \leq 6(M+D_M). 
\end{equation*}
Using \eqref{eq:main_u_v} again, we deduce that
\begin{equation} \label{eq:main_improvement}
\|v\|_{\xX(a,r)} + \|g\|_{\xX(a,r)} \leq 8(M+D_M). 
\end{equation}
To summarize, we have shown that if $\eta$ is chosen according to \eqref{eq:main_choice_eta} and $\|g\|_{\xX_2(\iI)} \leq \eta$, then \eqref{eq:main_improvement} holds whenever \eqref{eq:main_hypo} is true. Since $\|v\|_{\xX(a,a)} + \|g\|_{\xX(a,a)} = M < 9(M+D_M)$, and the norm is continuous in $r \in [a,b]$, we can conclude that \eqref{eq:main_improvement} is true for all $r \in [a,b]$, and hence we obtain \eqref{eq:u_endpt}. This completes the proof of Theorem~\ref{th:main_bd}.

\section{Uniform boundedness of $u_{m,\eps}$: Theorem~\ref{th:main_bd} implies Theorem~\ref{th:snls_uni_bd}}
\label{sec:snls_bd}

Let $u_{m,\eps}$ satisfy \eqref{eq:duhamel_me} with $\|u_0\|_{L_{\omega}^{\infty}L_{x}^{2}} \leq M$. By pathwise mass conservation, we have $\|u_{m,\eps}\|_{\xX_1(0,T)} \leq M$ almost surely for every $T>0$. It then suffices to consider $\xX_2$ only. 

Let $\eta=\eta_M$ be as in Proposition~\ref{pr:strong_sta_me}. We first choose $T_0 \leq 1$ (depending on $M$ only) such that
\begin{equation*}
\frac{1}{2} \Big\| \int_{r_1}^{r_2} \sS(t-s) \big( F_{\Phi} u_{m,\eps}(s) \big) {\rm d}s \Big\|_{\xX_{2}(0,T_0)} \leq C T_{0}^{\frac{4}{5}} \|u_{m,\eps}\|_{\xX_1(0,T_0)} \leq \frac{\eta}{2}
\end{equation*}
for all $0 \leq r_1 \leq r_2 \leq T_0$. Now, fix this $T_0$ and let $\iI = [0,T_0]$. For $t \in \iI$, let
\begin{equation*}
M^*(t) := \sup_{0 \leq r_1 \leq r_2 \leq t} \Big\| \int_{r_1}^{r_2} \sS(t-s) u_{m,\eps}(s) {\rm d} W_s \Big\|_{L_{x}^{10}}. 
\end{equation*}
We have the following proposition controlling $M^*$. 

\begin{prop} \label{pr:bound_maximal}
	There exists $C>0$ depending on $\rho$ only such that
	\begin{equation} \label{eq:bound_maximal}
	\|M^*\|_{L_{\omega}^{\rho}L_{t}^{5}(0,T_0)} \leq C_{\rho} T_{0}^{\frac{3}{10}} \|u_{m,\eps}\|_{L_{\omega}^{\rho} \xX_1(0,T_0)} \leq C_{\rho} M T_{0}^{\frac{3}{10}}. 
	\end{equation}
\end{prop}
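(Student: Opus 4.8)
The plan is to control the maximal function $M^*(t)$ by combining a factorization-type argument for the stochastic convolution with the Strichartz estimates of Section~\ref{sec: prelimi}. First I would recall that $M^*(t)$ involves a supremum over two endpoints $r_1\le r_2\le t$ inside the $L^{10}_x$ norm. The standard trick is to write
\begin{equation*}
\int_{r_1}^{r_2} \sS(t-s) u_{m,\eps}(s)\,{\rm d}W_s = \int_0^t \one_{[r_1,r_2]}(s)\,\sS(t-s) u_{m,\eps}(s)\,{\rm d}W_s,
\end{equation*}
so that the supremum over $r_1,r_2$ becomes a supremum over a parametrised family of stochastic integrals against the \emph{same} integrand process $s\mapsto \sS(t-s) u_{m,\eps}(s)\,\one(s)$. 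One then bounds $\sup_{r_1\le r_2} |\cdots|$ either by a doubling/dyadic decomposition of the pair $(r_1,r_2)$ together with a maximal inequality, or — cleaner here — by first taking the $L^{10}_x$ and $L^\rho_\omega$ norms, using the Burkholder--Davis--Gundy inequality (in the Hilbert-space-valued form, since $u_{m,\eps}(s)\,{\rm d}W_s$ is really $\sum_k u_{m,\eps}(s)(\Phi e_k)\,{\rm d}\beta_s^k$), and absorbing the endpoint supremum into the quadratic variation, which is monotone in the endpoints.

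Concretely, the key steps in order are: (i) fix $t$, dualise or directly estimate $\big\|\int_{r_1}^{r_2}\sS(t-s)u_{m,\eps}(s)\,{\rm d}W_s\big\|_{L^{10}_x}$; (ii) apply BDG together with Minkowski's integral inequality to exchange the $L^\rho_\omega$ norm with the $L^{10}_x$ and the time-integral of the quadratic variation, obtaining a bound of the form
\begin{equation*}
\Big\| \int_{r_1}^{r_2}\sS(t-s)u_{m,\eps}(s)\,{\rm d}W_s \Big\|_{L^\rho_\omega L^{10}_x} \lesssim \Big\| \Big( \int_0^t \big\| \sS(t-s) \big(u_{m,\eps}(s)\Phi(\cdot)\big) \big\|_{?}^2\,{\rm d}s \Big)^{1/2} \Big\|_{L^\rho_\omega},
\end{equation*}
where the inner norm is chosen so that the trace-class property of $\Phi$ into $\hH$ converts $\|u_{m,\eps}(s)\Phi e_k\|$ into $\|u_{m,\eps}(s)\|_{L^2_x}$ up to summable constants (this uses that $\hH\hookrightarrow$ a space of bounded, weighted functions); (iii) handle the propagator $\sS(t-s)$ and the resulting time integral by a Strichartz/dispersive estimate so that the $(r_1,r_2)$-supremum drops out by monotonicity and the final $L^5_t(0,T_0)$ norm in $t$ is recovered, producing the factor $T_0^{3/10}$; (iv) finally use pathwise mass conservation $\|u_{m,\eps}\|_{\xX_1(0,T_0)}\le M$ to get the second inequality in \eqref{eq:bound_maximal}.

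The power $T_0^{3/10}$ should be tracked carefully: it is the Strichartz/Hölder gain coming from the mismatch between the exponent $5$ in $L^5_t(0,T_0)$ and what BDG naturally produces from an $L^2_s$ quadratic variation of an $L^{10}_x$-valued integrand, i.e. roughly $\tfrac12-\tfrac1{10}-\tfrac1{\text{(time exponent of the integrand)}}$ type arithmetic; $\tfrac{3}{10}$ is consistent with $(10,\tfrac{10}{3})$ being an admissible pair in $d=1$, where $\tfrac{2}{q}+\tfrac1r=\tfrac12$ with $q=10$, $r=\tfrac{10}{3}$ — note $\tfrac{2}{10}+\tfrac{3}{10}=\tfrac12$. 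So the natural route is: BDG gives control by $\|s\mapsto u_{m,\eps}(s)\|_{L^{q'}_s L^{r'}_x}$-type quantities with the dual admissible exponents, then Hölder in time on the interval of length $T_0$ converts $L^{\infty}_t L^2_x$ (which is what we have) into the needed dual-Strichartz space at the cost of $T_0^{3/10}$.

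\textbf{Main obstacle.} The delicate point is the interchange of the stochastic integral, the maximal supremum over $(r_1,r_2)$, and the spatial $L^{10}_x$ norm while keeping all constants independent of $m$ and $\eps$. One must be careful that BDG is applied in the correct Banach-space-valued setting (the integrand takes values in $L^{10}_x$, which is a UMD space, so a vector-valued BDG inequality applies) rather than naively pulling the $L^{10}_x$ norm inside, and that the endpoint supremum is genuinely controlled by the quadratic variation run up to time $t$ without extra logarithmic losses — here monotonicity of the quadratic variation in $r_2$ and the fact that $r_1$ only restricts the domain of integration (so the quadratic variation over $[r_1,r_2]$ is dominated by that over $[0,t]$) is what saves us. Making this rigorous, and confirming the trace-class bound $\sum_k \|g\,\Phi e_k\|_{L^{r'}_x}^2 \lesssim \|g\|_{L^2_x}^2$ using Assumption~\ref{as:noise} (the weight $(1+|x|^K)$ in $\hH$ compensating for the loss of integrability when passing from $L^2_x$ to $L^{r'}_x$ with $r'<2$), is the part that needs genuine care; everything else is routine Strichartz bookkeeping.
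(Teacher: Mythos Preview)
Your proposal is correct and follows essentially the same route as the paper: reduce the double supremum over $(r_1,r_2)$ to a single one (the paper writes $\int_{r_1}^{r_2}=\int_0^{r_2}-\int_0^{r_1}$ and bounds by $2\sup_{\tau\le t}$, which is your monotonicity remark), apply the UMD-valued Burkholder inequality to land on the $\gamma$-radonifying norm, then use the pointwise dispersive estimate $\|\sS(t-s)g\|_{L^{10}_x}\lesssim (t-s)^{-2/5}\|g\|_{L^{10/9}_x}$ together with H\"older $\|u\,\Phi e_k\|_{L^{10/9}_x}\le \|u\|_{L^2_x}\|\Phi e_k\|_{L^{5/2}_x}$ and the trace-class assumption. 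The only cosmetic difference is how $T_0^{3/10}$ is accounted for: the paper first uses Minkowski and H\"older in $L^5_t(0,T_0)$ to extract $T_0^{1/5}$ and reduce to $\sup_t\|M^*(t)\|_{L^\rho_\omega}$, and then gets the remaining $T_0^{1/10}$ from integrating $(t-s)^{-4/5}$ in the quadratic variation --- not via a dual-Strichartz pair as you suggest --- but the arithmetic is equivalent.
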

\begin{proof}
	See appendix. 
\end{proof}

Now we proceed with the proof of Theorem~\ref{th:main_bd}. Proposition~\ref{pr:bound_maximal} in particular implies that $\|M^*\|_{L_{t}^{5}(0,T_0)}$ is finite. Hence, we can choose a random dissection
\begin{equation*}
0 = \tau_0 < \tau_1 < \cdots < \tau_K = T_0
\end{equation*}
of $\iI$ in the following way. Let $\tau_0 = 0$. Suppose $\tau_k$ is chosen, and let
\begin{equation*}
\tau_{k+1} = T_0 \wedge \inf\Big\{\tau>\tau_k: \int_{\tau_k}^{\tau} |M^*(t)|^{5} {\rm d}t = \frac{\eta}{2} \Big\}, 
\end{equation*}
where we recall $\eta=\eta_M$ is specified at the beginning of the section. The total number $K$ of the intervals is bounded by
\begin{equation*}
K \leq 1 + \frac{2}{\eta} \cdot \|M^*\|_{L_{t}^{5}(\iI)}^{5}. 
\end{equation*}
Now fix $\omega \in \Omega$ and $k \leq K$ arbitrary. For $t \in [\tau_k, \tau_{k+1}]$, let
\begin{equation*}
g_{m,\eps}(t) = - i \int_{\tau_j}^{t} \sS(t-s) u_{m,\eps}(s) {\rm d} W_s - \frac{1}{2} \int_{\tau_j}^{t} \sS(t-s) \big( F_{\Phi} u_{m,\eps}(s) \big) {\rm d}s. 
\end{equation*}
Then, $g_{m,\eps}$ satisfies the assumption of Theorem~\ref{th:main_bd} on $[\tau_k, \tau_{k+1}]$, so we have
\begin{equation*}
\int_{\tau_j}^{\tau_{j+1}} \|u_{m,\eps}(t)\|_{L_{x}^{10}}^{5} {\rm d} t \leq B
\end{equation*}
for some $B$ depending on $M$ only. Since this is true for all $0 \leq k \leq K-1$, we can sum the above inequality over the intervals $[\tau_k, \tau_{k+1}]$ for all $k$ so that
\begin{equation*}
\int_{0}^{T_0} \|u_{m,\eps}(t)\|_{L_{x}^{10}}^{5} {\rm d} t \leq B \Big(1 + \frac{2}{\eta} \cdot \|M^*\|_{L_{t}^{5}(\iI)}^{5} \Big). 
\end{equation*}
Taking $5$-th root and then $L_{\omega}^{\rho}$-norm on both sides, and applying \eqref{eq:bound_maximal}, we obtain
\begin{equation*}
\|u_{m,\eps}\|_{L_{\omega}^{\rho}\xX(0,T_0)} \leq C B^{\frac{1}{5}} \big( 1 + \eta^{-\frac{1}{5}} \|M^{*}\|_{L_{\omega}^{\rho}L_{t}^{5}(0,T_0)} \big) \leq C B^{\frac{1}{5}} \big( 1 + C_{\rho} M T_{0}^{\frac{3}{10}} \big). 
\end{equation*}
Since both $B$ and $\eta$ depend on $M$ only, we can conclude the proof of the theorem.

\section{Roadmap to the proof of Propositions~\ref{pr:bd_me}}\label{secroadmap1}

\subsection{A brief review of stability for the mass critical NLS}

Fix $\iI = [a,b]$ with $b-a \leq 1$. Let $c \in [0,1]$, and $w \in \xX(\iI)$ be the solution to
\begin{equation} \label{eq:nls_w}
i \d_t w + \Delta w = c |w|^4 w\;, \qquad w(a) \in L_{x}^{2}, 
\end{equation}
and $v \in \xX(\iI)$ and $e \in L_{t}^{1}L_{x}^{2}(\iI)$ such that
\begin{equation} \label{eq:nls_v}
i \d_t v + \Delta v = c |v|^4 v +e\;, \quad v(a) \in L_{x}^{2}. 
\end{equation}
We call \eqref{eq:nls_w} mass critical NLS with parameter $c$. The following stability result is well known and purely pertubative. 

\begin{prop} \label{pr:iteam_sta}
Let $c \in [0,1]$ and $w \in \xX(\iI)$ be the solution to \eqref{eq:nls_w}. Let $v, e$ satisfy \eqref{eq:nls_v}. Then for every $M_1, M_2>0$, there exist $\delta = \delta_{M_1, M_2}$ and $C = C_{M_1, M_2}$ such that if
\begin{equation}
\|u\|_{\xX_1(\iI)} \leq M_{1}\;, \quad \|u\|_{\xX_2(\iI)} \leq M_{2}\;, \quad \|v(a)-w(a)\|_{L_{x}^{2}} +  \|e\|_{L_{t}^{1}L_{x}^{2}([a,b]\times \RRR)}\leq \delta, 
\end{equation}
then we have
\begin{equation}
\|u-w\|_{\xX(\iI)} \leq C \big( \|v(a)-w(a)\|_{L_{x}^{2}} +  \|e\|_{L_{t}^{1}L_{x}^{2}([a,b]\times \RRR)} \big). 
\end{equation}
In particular, $C_{M_{1},M_{2}}$ does not depend on $c$, and its dependence on $\iI$ is through $b-a$ only (and hence universal here since we assume $b-a \leq 1$). 
\end{prop}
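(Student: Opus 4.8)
The plan is to run the standard Strichartz bootstrap for the mass-critical NLS, dividing the interval $\iI$ into finitely many subintervals on each of which $v$ has small $\xX_2$-norm, and propagating the difference $z = v - w$ across these subintervals. First I would fix the admissible pairs: $(\infty,2)$ and $(5,10)$ are the two relevant exponents, and the nonlinearity $|u|^4u$ lives in the dual Strichartz space $L_t^{5/4}L_x^{10/7}$ (the conjugate of $(5,10)$). Given $M_1, M_2$, choose $\eta_0>0$ small (universal) and partition $[a,b] = \bigcup_{j=1}^{J} [a_j, a_{j+1}]$ with $J = J(M_2)$ subintervals so that $\|v\|_{\xX_2(a_j,a_{j+1})} \leq \eta_0$ on each; this is possible since $\|v\|_{\xX_2(\iI)} \leq M_2$.

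On a single such subinterval, $z = v-w$ solves $i\partial_t z + \Delta z = c(|v|^4v - |w|^4w) + e$ with $|v|^4v - |w|^4w = O\big((|v|^4 + |w|^4)|z|\big)$ pointwise. Applying the inhomogeneous Strichartz estimate and H\"older in the $L_t^{5/4}L_x^{10/7}$ norm of the nonlinear difference, one gets
\begin{equation*}
\|z\|_{\xX([a_j,a_{j+1}])} \leq C\|z(a_j)\|_{L_x^2} + C\|e\|_{L_t^1L_x^2([a_j,a_{j+1}])} + C\big(\|v\|_{\xX_2}^4 + \|w\|_{\xX_2}^4\big)\|z\|_{\xX_2([a_j,a_{j+1}])}.
\end{equation*}
Here one needs an a priori bound on $\|w\|_{\xX_2}$ on the subinterval: since $\|w(a)\|_{L_x^2} = \|v(a)-z(a)\|_{L_x^2} \leq M_1 + \delta$ is comparable to $M_1$, and since $w$ satisfies the (scaling-critical) NLS with parameter $c \in [0,1]$, a standard local theory argument — or simply comparing $w$ to $v$ via the perturbative estimate on a further subdivision — gives $\|w\|_{\xX_2([a_j,a_{j+1}])} \lesssim \eta_0$ after possibly refining the partition, so the factor $(\|v\|_{\xX_2}^4 + \|w\|_{\xX_2}^4)$ is $\leq C\eta_0^4 \leq \tfrac{1}{2C}$. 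Absorbing that term on the left yields $\|z\|_{\xX([a_j,a_{j+1}])} \leq 2C\big(\|z(a_j)\|_{L_x^2} + \|e\|_{L_t^1L_x^2([a_j,a_{j+1}])}\big)$, and in particular $\|z(a_{j+1})\|_{L_x^2} \leq 2C\big(\|z(a_j)\|_{L_x^2} + \|e\|_{L_t^1L_x^2}\big)$.

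Iterating this over the $J$ subintervals gives $\|z\|_{\xX(\iI)} \leq (2C)^{J}\big(\|z(a)\|_{L_x^2} + \|e\|_{L_t^1L_x^2(\iI)}\big)$, which is the claimed bound with $C_{M_1,M_2} = (2C)^{J(M_2)}$ depending on $M_1,M_2$ only (through $J$, and through the a priori size of $w(a)$, but not on $c$ nor on $a,b$ separately). The constraint $\delta = \delta_{M_1,M_2}$ is what makes the partition of $v$ valid at the level of $z$ and keeps $\|w(a)\|_{L_x^2}$ bounded; one also uses $\delta$ small to guarantee the partition of $v$ can be chosen finitely (which only needs $M_2$) rather than anything about $\delta$. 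The main obstacle is the non-perturbative input: controlling $\|w\|_{\xX_2}$ a priori on subintervals, since $w$ solves a critical equation whose $\xX_2$-norm is not controlled by mass alone. In this proposition this is sidestepped because the subinterval partition can be made to depend on $M_2$ (the size of $v$) and $w$ stays close to $v$ by the same Strichartz argument run first with $e'=0$; but morally this is exactly the gap that Proposition~\ref{pr:bd_me} later has to fill, and it is why the present proposition is flagged as ``purely perturbative'' while the later ones are not.
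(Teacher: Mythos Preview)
Your strategy---partition $\iI$ according to $\|v\|_{\xX_2}$, run Strichartz on $z=v-w$, and iterate---is exactly what the paper does (see the appendix proof of Proposition~\ref{pr:sta_me}, which the paper says is essentially the same argument). The difference is in how you handle the nonlinear term. You write the pointwise bound as $(|v|^4+|w|^4)|z|$ and are then forced to control $\|w\|_{\xX_2}$ on each subinterval. The paper instead uses $|w|\le|v|+|z|$ to rewrite this as $(|v|^4+|z|^4)|z|$, eliminating $w$ from the estimate entirely. The subinterval inequality then reads
\[
\|z\|_{\xX(\tau_k,r)} \le C\|z(\tau_k)\|_{L_x^2}+C\|e\|_{L_t^1L_x^2}+C\eta^4\,\|z\|_{\xX(\tau_k,r)}+C\,\|z\|_{\xX(\tau_k,r)}^5,
\]
with the $\eta^4$ term absorbed by smallness of $\eta$ and the superlinear $\|z\|^5$ term handled by a continuity argument in $r$ (this is exactly where the smallness of $\delta$ enters). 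No information on $w$ beyond the equation it solves is ever invoked, which is precisely why the proposition is labelled ``purely perturbative.''

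Your first proposed workaround---``a standard local theory argument'' giving $\|w\|_{\xX_2}\lesssim\eta_0$ on the subinterval---does not work as stated: for the mass-critical equation the local existence interval depends on the profile of the data, not just its $L^2$ norm, so one cannot conclude smallness of $\|w\|_{\xX_2}$ on a subinterval that was chosen according to $v$. Your second workaround (bootstrap $w$ against $v$) does work, but once unwound it is the same continuity argument as above, just run on $\|w\|_{\xX_2}$ instead of $\|z\|_{\xX}$. In particular no non-perturbative input is hiding here, and your remark linking this to Proposition~\ref{pr:bd_me} misreads the logical structure: that proposition addresses the genuinely harder situation where \emph{neither} $v$ nor $w$ comes with an a priori $\xX_2$ bound.
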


The details of the above proposition can be found in Lemmas 3.9 and 3.10 in \cite{colliander2008global}. Apriori, it is highly nontrival that $w$ can be defined on all of $\iI$. However, the proposition indeed implies $w$ is well defined on $[a.b]$. It is proved by Dodson (\cite{dodson2012global, dodson2016global, dodson2467global}) that the solution to \eqref{eq:nls_w} with arbitrary $L^2$ initial data is global and scatters.

\begin{thm}[Dodson Scattering] \label{th:dodson}
	
Let $u$ solves NLS with initial data $u_{0}\in L_{x}^{2}$,
\begin{equation}
\begin{cases}
i \d_t u + \Delta u = c|u|^{4}u,\\
u_{0} \in L_{x}^{2}, 
\end{cases}
\end{equation}
and $0<c\leq 1$. Then $u$ is global and 
\begin{equation} \label{eq:dodson_bd}
\|u\|_{\xX_2(\RR)} \leq C=C_{\|u_{0}\|_{2}}.
\end{equation}
The bound does not depend on $c$. In particular, it implies that if $w$ solves \eqref{eq:nls_w}, then
\begin{equation*}
\|w\|_{\xX(\iI)} \lesssim_{\|w(a)\|_{L_x^2}} 1. 
\end{equation*}
\end{thm}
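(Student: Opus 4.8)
The strategy is to reduce the statement to Dodson's theorem for the genuinely mass‑critical equation, i.e. the case $c=1$, by the scaling that absorbs the coupling constant. Given $u$ solving $i\d_t u+\Delta u=c|u|^4u$, set $v=c^{1/4}u$. A one‑line computation shows that $v$ solves $i\d_t v+\Delta v=|v|^4v$, with $\|v(a)\|_{L_x^2}=c^{1/4}\|u(a)\|_{L_x^2}\le\|u_0\|_{L_x^2}$ (here we use $c\le 1$) and $\|v\|_{\xX_2(J)}=c^{1/4}\|u\|_{\xX_2(J)}$ on any interval $J$. By Dodson's global well‑posedness and scattering result \cite{dodson2012global, dodson2016global, dodson2467global} (phrased in the $\xX_2$ norm through the standard local theory, Lemma~\ref{lem: prelwp} and the usual contraction argument), $v$ is global and $\|v\|_{\xX_2(\RR)}\le C_{\|v(a)\|_{L_x^2}}$, where—after replacing $C_\mu$ by $\sup_{\nu\le\mu}C_\nu$ if needed—we may take $\mu\mapsto C_\mu$ non‑decreasing. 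By uniqueness in the local theory, $u=c^{-1/4}v$ is then the unique global solution of the $c$‑equation, and $\|u\|_{\xX_2(\RR)}=c^{-1/4}\|v\|_{\xX_2(\RR)}\le c^{-1/4}C_{\|u_0\|_{L_x^2}}$.

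The only real point of the argument is that this last bound degenerates as $c\to 0$, so I would split into two regimes according to the size of $c$. Let $\delta_0>0$ be the small–data threshold for the $c=1$ equation, for which the global small–data theory (the global analogue of Lemma~\ref{lem: smallmass}, obtained from the same fixed‑point argument with global Strichartz norms; see e.g. \cite{tao2006nonlinear}) gives the \emph{linear} bound $\|v\|_{\xX_2(\RR)}\le C\|v(a)\|_{L_x^2}$ whenever $\|v(a)\|_{L_x^2}\le\delta_0$. Put $c_*=\min\{1,(\delta_0/\|u_0\|_{L_x^2})^4\}$. If $c\le c_*$, then $\|v(a)\|_{L_x^2}=c^{1/4}\|u_0\|_{L_x^2}\le\delta_0$, hence $\|u\|_{\xX_2(\RR)}=c^{-1/4}\|v\|_{\xX_2(\RR)}\le c^{-1/4}\,C\,c^{1/4}\|u_0\|_{L_x^2}=C\|u_0\|_{L_x^2}$. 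If $c_*\le c\le 1$, then $c^{-1/4}\le\max\{1,\|u_0\|_{L_x^2}/\delta_0\}$ while $\|v\|_{\xX_2(\RR)}\le C_{\|u_0\|_{L_x^2}}$ by Dodson, so $\|u\|_{\xX_2(\RR)}\le\max\{1,\|u_0\|_{L_x^2}/\delta_0\}\,C_{\|u_0\|_{L_x^2}}$. In both cases the bound depends on $\|u_0\|_{L_x^2}$ only, which is exactly the claimed uniformity in $c$; globality is immediate since $v$ is global.

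For the concluding assertion, if $w$ solves \eqref{eq:nls_w} on $\iI=[a,b]$ with $b-a\le 1$, then $w$ is the restriction to $\iI$ of the global solution emanating from $w(a)$ at time $a$, so $\|w\|_{\xX_2(\iI)}\le\|w\|_{\xX_2(\RR)}\lesssim_{\|w(a)\|_{L_x^2}}1$; moreover the equation conserves the $L^2$ norm, so $\|w\|_{\xX_1(\iI)}=\|w\|_{L_t^\infty L_x^2(\iI)}=\|w(a)\|_{L_x^2}$, and summing the two gives $\|w\|_{\xX(\iI)}\lesssim_{\|w(a)\|_{L_x^2}}1$. I do not expect a genuine obstacle here: the substance of the theorem is Dodson's result, which is quoted; the only thing needing care is the $c\to 0$ degeneration of the rescaling, handled above by invoking the linear small‑data bound rather than the full Dodson bound.
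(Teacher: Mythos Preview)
Your proposal is correct and matches the paper's approach exactly. The paper does not give a formal proof of this theorem (it is quoted from Dodson's work), but the remark immediately following it outlines precisely your argument: rescale $v=c^{1/4}u$ to reduce to the $c=1$ case, note the resulting bound degenerates as $c\to 0$, and handle small $c$ perturbatively via the small-data theory (Lemma~\ref{lem: prelwp}/Lemma~\ref{lem: smallmass}); your write-up is a faithful fleshing-out of that remark.
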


\begin{rem}
This theorem is highly nontrivial and completely non-pertubative.
\end{rem}

\begin{rem}
Theorem \ref{th:dodson} is usually stated for $c=1$. Clearly, after by multiplying a constant to the solution to \eqref{eq:nls_w}, one  derives Theorem \ref{th:dodson} for all $\delta<c\leq 1$ with a constant depending on $\delta$ from the case $c=1$. However, fixing $M=\|u_{0}\|_{L_{x}^{2}}$, when $c$ is small enough, the problem follows into the perturbation  scheme, in the spirit of Lemma \ref{lem: prelwp}. Thus, one has a uniform bound independent of $c$ in  \eqref{eq:dodson_bd}.
\end{rem}

With Theorem \ref{th:dodson}, one could enhance the stability result \ref{pr:iteam_sta} to the following. 

\begin{prop}\label{pr:strong_sta}
	Let $w$ be the solution to \eqref{eq:nls_w}, and $v,e$ satisfy \eqref{eq:nls_v}. For every $M>0$, there exist $\delta, C>0$ depending on $M$ only such that if
	\begin{equation*}
	\|w(a)\|_{L_{x}^{2}} \leq M\;, \quad \|v(a)-w(a)\|_{L_{x}^{2}} + \|e\|_{L_{t}^{1}L_{x}^{2}(\iI)} \leq \delta, 
	\end{equation*}
	then we have
	\begin{equation*}
	\|v-w\|_{\xX(\iI)} \leq C \big( \|v(a)-w(a)\|_{L_{x}^{2}} + \|e\|_{L_{t}^{1}L_{x}^{2}(\iI)} \big). 
	\end{equation*}
	The constants $\delta$ and $C$ depend on $M$ only. 
\end{prop}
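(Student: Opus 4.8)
The plan is to upgrade the purely perturbative stability statement in Proposition~\ref{pr:iteam_sta} to the non-perturbative one in Proposition~\ref{pr:strong_sta} by using Theorem~\ref{th:dodson} (Dodson scattering) to remove the a priori hypothesis $\|v\|_{\xX_2(\iI)} \le M_2$. The key observation is that once we know $w$ is globally bounded on $\iI$ in terms of $\|w(a)\|_{L_x^2}$ alone, the quantity $\|v\|_{\xX_2(\iI)}$ need not be assumed bounded: it will be \emph{deduced} on small subintervals by a continuity/bootstrap argument, and then on all of $\iI$ by iterating over a finite (and controllable) number of such subintervals.

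Concretely, here is the order of the steps. First, apply Theorem~\ref{th:dodson} to fix $L := \|w\|_{\xX(\iI)}$, a constant depending only on $M \ge \|w(a)\|_{L_x^2}$. Then invoke Proposition~\ref{pr:iteam_sta} with $M_1 = L+1$ and $M_2 = L+1$ to obtain $\delta_1 = \delta_{L+1,L+1}$ and $C_1 = C_{L+1,L+1}$, both depending on $M$ only. Second, set up a bootstrap: on a subinterval $\iI' = [a,c] \subset \iI$, assume $\|v\|_{\xX_2(\iI')} \le L+1$ and $\|v(a)-w(a)\|_{L_x^2} + \|e\|_{L_t^1 L_x^2(\iI')} \le \delta$ with $\delta$ sufficiently small (to be chosen below, $\le \delta_1$). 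Then Proposition~\ref{pr:iteam_sta} gives $\|v-w\|_{\xX(\iI')} \le C_1\delta$, hence $\|v\|_{\xX_2(\iI')} \le L + C_1\delta$, and choosing $\delta \le 1/(2C_1)$ upgrades the bootstrap hypothesis to the strict bound $\|v\|_{\xX_2(\iI')} \le L + \tfrac12$. Since $\|v\|_{\xX_2([a,a])} = 0 < L+1$ and $r \mapsto \|v\|_{\xX_2([a,r])}$ is continuous, a standard continuity argument closes the bootstrap and gives the conclusion $\|v-w\|_{\xX(\iI)} \le C_1\delta \le C_1(\|v(a)-w(a)\|_{L_x^2} + \|e\|_{L_t^1L_x^2(\iI)})$ on the whole interval, with $C = C_1$ and $\delta = \min(\delta_1, 1/(2C_1))$ depending on $M$ only.

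One subtlety to address carefully: in the bootstrap, Proposition~\ref{pr:iteam_sta} is applied on subintervals $[a,r]$, and the smallness of $\|e\|_{L_t^1L_x^2([a,r])}$ is inherited from that on $[a,b]$ since the norm is monotone in $r$; likewise $\|v(a)-w(a)\|_{L_x^2}$ is unchanged, so the hypothesis of Proposition~\ref{pr:iteam_sta} is met uniformly in $r$. Also, one should remark that in the present modified-equation setting (Proposition~\ref{pr:strong_sta_me}) the same scheme works verbatim once Proposition~\ref{pr:bd_me} is available in place of Theorem~\ref{th:dodson} to supply the uniform bound $L$ on $\|w\|_{\xX(\iI)}$, with the extra parameter $|\tilde A - A|$ absorbed into the small quantity exactly as in Proposition~\ref{pr:sta_me}; this is presumably why the authors defer that implication to the appendix.

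The main obstacle is conceptual rather than technical: it is recognizing that the non-perturbative input (Dodson's theorem, or its modified analogue Proposition~\ref{pr:bd_me}) is used \emph{only} to control the reference solution $w$, and that a single application of the perturbative Proposition~\ref{pr:iteam_sta} combined with a continuity argument then suffices — there is no need to decompose $\iI$ into many small subintervals and accumulate errors, because $\|w\|_{\xX(\iI)}$ is already globally bounded and the perturbation $v-w$ stays within a ball of radius $C_1\delta$ uniformly. The only mild care needed is in the choice of $\delta$ so that the gain $C_1\delta$ is strictly smaller than the slack in the bootstrap hypothesis, which is routine.
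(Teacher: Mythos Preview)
Your argument is correct and is actually somewhat cleaner than the route the paper takes. The paper does not apply Proposition~\ref{pr:iteam_sta} as a black box; instead (see Appendix~\ref{secnaturalargu}, where the analogous implication Propositions~\ref{pr:sta_me}+\ref{pr:bd_me} $\Rightarrow$ Proposition~\ref{pr:strong_sta_me} is carried out and declared to be the same argument) it \emph{reopens} the proof of Proposition~\ref{pr:sta_me} and redoes the subinterval decomposition, this time chopping $\iI$ according to $\|w\|_{\xX_2(\tau_k,\tau)}^5 = \eta$ rather than $\|v\|_{\xX_2(\tau_k,\tau)}^5 = \eta$. Dodson's bound then controls the number $K$ of subintervals in terms of $M$ alone, and one iterates the local estimate $K$ times. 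Your version hides this decomposition inside the constant $C_1 = C_{L+1,L+1}$ of Proposition~\ref{pr:iteam_sta} and replaces the iteration by a single global continuity argument; the upshot is the same, and the constants are comparable since the exponential blow-up in $K$ is already baked into $C_1$.

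One small point you should patch: Proposition~\ref{pr:iteam_sta} as stated requires a bound on $\|v\|_{\xX_1(\iI')}$ as well as on $\|v\|_{\xX_2(\iI')}$, and your bootstrap hypothesis only tracks the latter. This is harmless --- either bootstrap both norms simultaneously, or simply observe from the equation for $v$ that $\partial_t\|v(t)\|_{L_x^2} \le \|e(t)\|_{L_x^2}$, whence $\|v\|_{\xX_1(\iI)} \le \|v(a)\|_{L_x^2} + \|e\|_{L_t^1L_x^2(\iI)} \le M + 2\delta \le L+1$ automatically --- but it should be said.
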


Proposition \ref{pr:strong_sta} can be derived from Proposition \ref{pr:iteam_sta} and Proposition \ref{th:dodson} via a bootrap argument. Rather than give a proof here, we will indeed show how to use a bootstrap argument to derive Proposition~\ref{pr:strong_sta_me} from Proposition~\ref{pr:bd_me}, Propostion ~\ref{pr:sta_me}. The argument is in principle same

We remark here the key enhancement in Proposition \ref{pr:strong_sta} is that one does not need the control of $\|u\|_{\xX_2}$ any more. This proposition, depending on Theorem~\ref{th:dodson}, is of nonperturbative nature.

The Proposition \ref{pr:sta_me}, \ref{pr:bd_me}, \ref{pr:strong_sta_me} are natural generalizations of Proposition \ref{pr:iteam_sta}, Theorem \ref{th:dodson} and Proposition \ref{pr:strong_sta}, which are special situations when $m=+\infty$ and $\eps=0$. We will give a proof of Proposition~\ref{pr:sta_me} in the appendix, which is essentially the same as the proof of Proposition~\ref{pr:iteam_sta}.

\subsection{Parallel statements for $m=+\infty$}

We now present the parallel statements of Propositions~\ref{pr:sta_me}, \ref{pr:bd_me}, and~\ref{pr:strong_sta_me} for $m=\infty$ and $\eps \in [0,1]$. 

We fix the interval $\iI = [a,b]$, and let $c \in [0,1]$. Let $w_\eps \in \xX(\iI)$ such that
\begin{equation} \label{eq:nls_w_eps}
i \d_t w_\eps + \Delta w_\eps = c \nN^{\eps}(w_\eps)\;, \quad w_\eps(a) \in L_x^2\;, 
\end{equation}
and $v_\eps \in \xX(\iI)$, $e_\eps \in L_{t}^{1}L_{x}^{2}(\iI)$ such that
\begin{equation} \label{eq:nls_v_eps}
i \d_t v_\eps + \Delta v_\eps = c \nN^\eps(v_\eps) + e_\eps\;, \quad v_\eps(a) \in L_x^2. 
\end{equation}
Throughout this section, $w_\eps$, $v_\eps$ and $e_\eps$ satisfy the above equations. All constants below depend on the interval $\iI$ through $b-a$ only (and is uniform for all intervals with smaller lengths).

\begin{prop} [Uniform-in-$\eps$ stability]
	\label{pr:sta_eps}
	Let $w_\eps$ be the solution to \eqref{eq:nls_w_eps}, and $v_\eps, e_\eps$ satisfies \eqref{eq:nls_v_eps}. Then, for every $M_{1}, M_2>0$, there exist $\delta, C>0$ depending on $M_1$ and $M_2$ only such that if $\|v_\eps\|_{\xX_1(\iI)} \leq M_1$, $\|v_\eps\|_{\xX_2(\iI)} \leq M_2$ and
	\begin{equation*}
	\|v_\eps(a) - w_\eps(a)\|_{L_{x}^{2}} + \|e\|_{L_{t}^{1}L_{x}^{2}} \leq \delta, 
	\end{equation*}
	then we have
	\begin{equation*}
	\|v_\eps - w_\eps\|_{\xX(\iI)} \leq C \big( \|v_\eps(a) - w_\eps(a)\|_{L_{x}^{2}} + \|e\|_{L_{t}^{1}L_{x}^{2}} \big). 
	\end{equation*}
	In fact, the constants $\delta$ and $C$ can depend on $M_2$ only. 
\end{prop}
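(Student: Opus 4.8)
The plan is to prove Proposition~\ref{pr:sta_eps} exactly as one proves the classical stability result Proposition~\ref{pr:iteam_sta} (i.e.\ Lemmas 3.9--3.10 of \cite{colliander2008global}), checking that the argument is uniform in $\eps \in [0,1]$ and in the parameter $c \in [0,1]$. The key point is that the only $\eps$-dependent object is the nonlinearity $\nN^\eps(u) = |u|^{4-\eps}u$, and for every $\eps\in[0,1]$ it satisfies the same pointwise difference estimate $|\nN^\eps(u)-\nN^\eps(v)| \lesssim (|u|^{4-\eps}+|v|^{4-\eps})|u-v|$ with a constant that can be chosen uniformly in $\eps$; moreover, by Lemma~\ref{lem: prelwp}, the Duhamel term picks up a harmless factor $(b-a)^{\eps/4}\le 1$. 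So at the level of Strichartz estimates the $\eps$-dependence never hurts.

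First I would subdivide $\iI=[a,b]$ into finitely many consecutive subintervals $\iI_1,\dots,\iI_J$ (with $J=J(M_2)$ only) such that $\|v_\eps\|_{\xX_2(\iI_\ell)}\le \nu$ on each, where $\nu$ is a small absolute constant to be fixed; this is possible since $\|v_\eps\|_{\xX_2(\iI)}\le M_2$. Then I set $z_\eps = v_\eps - w_\eps$, which solves $i\d_t z_\eps + \Delta z_\eps = c(\nN^\eps(v_\eps)-\nN^\eps(w_\eps)) + e$. On the first subinterval, writing the Duhamel formula for $z_\eps$ and applying the inhomogeneous Strichartz estimate together with the pointwise difference bound and H\"older, I get
\begin{equation*}
\|z_\eps\|_{\xX(\iI_1)} \le C_0\|z_\eps(a)\|_{L_x^2} + C_0\|e\|_{L_t^1 L_x^2(\iI_1)} + C_0\big(\|v_\eps\|_{\xX_2(\iI_1)}^{4-\eps} + \|w_\eps\|_{\xX_2(\iI_1)}^{4-\eps}\big)\|z_\eps\|_{\xX_2(\iI_1)}.
\end{equation*}
To close this one needs $\|w_\eps\|_{\xX_2(\iI_1)}$ small as well; this follows from Lemma~\ref{lem: enhancedsmallness} (or a continuity/bootstrap argument) once $\|v_\eps\|_{\xX_2(\iI_1)}$ and $\|z_\eps(a)\|_{L_x^2}$ are small, since $w_\eps = v_\eps - z_\eps - (\text{Duhamel of }e)$ and $\|e\|_{L_t^1L_x^2}\le\delta$. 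Hence for $\nu$ and $\delta$ small enough (absolute constants) the last term is absorbed into the left side, giving $\|z_\eps\|_{\xX(\iI_1)} \le 2C_0(\|z_\eps(a)\|_{L_x^2} + \|e\|_{L_t^1L_x^2(\iI_1)})$.

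Then I iterate over $\iI_2,\dots,\iI_J$: the output bound on $z_\eps$ at the right endpoint of $\iI_\ell$ becomes the input $\|z_\eps\|_{L_x^2}$ data for $\iI_{\ell+1}$, and each step multiplies the constant by a fixed factor, so after $J$ steps one arrives at $\|z_\eps\|_{\xX(\iI)} \le C(\|z_\eps(a)\|_{L_x^2} + \|e\|_{L_t^1L_x^2(\iI)})$ with $C = C(M_2)$; one also has to shrink $\delta = \delta(M_2)$ at each step so that the smallness hypotheses propagate (the accumulated data stays below the threshold needed on each subinterval). Finally, since $\|v_\eps\|_{\xX_1(\iI)} \le M_1$ was only used implicitly (the $\xX_2$ bound is what controls the subdivision), I note that $C$ and $\delta$ in fact depend on $M_2$ only, as claimed. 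The main obstacle — or really the only thing that needs care — is the bookkeeping: making sure the smallness parameter $\delta$ can be chosen once, depending on $M_2$, so that it survives being degraded through all $J(M_2)$ iterations; this is routine but must be done in the right order (first fix $\nu$, hence $J$, then chase $\delta$ backwards through the subintervals). The uniformity in $\eps$ and $c$ is automatic because every constant above came from Strichartz estimates and the pointwise nonlinear bound, none of which sees $\eps$ or $c$ except through the factor $(b-a)^{\eps/4}\le1$ and the multiplier $c\le1$.
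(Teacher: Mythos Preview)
Your approach is essentially the same as the paper's (subdivide $\iI$ according to $\|v_\eps\|_{\xX_2}$, apply Duhamel/Strichartz on each piece, then iterate with a continuity argument), and it is correct. One small simplification the paper uses: instead of separately controlling $\|w_\eps\|_{\xX_2(\iI_\ell)}$ via a bootstrap, you can write the pointwise bound as $|\nN^\eps(v_\eps)-\nN^\eps(w_\eps)|\lesssim |z_\eps|\big(|v_\eps|^{4-\eps}+|z_\eps|^{4-\eps}\big)$ (using $|w_\eps|\le|v_\eps|+|z_\eps|$), which after H\"older gives a term $\nu^{4-\eps}\|z_\eps\|_{\xX}+\|z_\eps\|_{\xX}^{5-\eps}$ and thus a cleaner continuity argument on $\|z_\eps\|$ alone.
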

The proof of Proposition \ref{pr:sta_eps} is almost exactly same as the proof of Proposition \ref{pr:iteam_sta} and the proof of Proposition \ref{pr:sta_me}. Since we give a proof of Proposition \ref{pr:sta_me} in the appendix, we leave the proof of Proposition \ref{pr:sta_me} to the readers.

\begin{prop} [Uniform-in-$\eps$ boundedness]
	\label{pr:Dodson_eps}
	Let $w_\eps \in \xX(\iI)$ be the solution to \eqref{eq:nls_w_eps}. Then for every $M>0$, there exists $\tilde{D}_M$ such that $\|w_\eps\|_{\xX_2(\iI)} \leq \tilde{D}_M$ whenever $\|w_\eps(a)\|_{L_x^2} \leq M$. 
\end{prop}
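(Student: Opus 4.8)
The plan is to deduce Proposition~\ref{pr:Dodson_eps} from Dodson's scattering theorem (Theorem~\ref{th:dodson}) together with the uniform-in-$\eps$ stability of Proposition~\ref{pr:sta_eps}, treating the defocusing mass-critical nonlinearity $c|w|^4 w$ as the limit ($\eps = 0$) of $c\nN^\eps(w_\eps) = c|w_\eps|^{4-\eps}w_\eps$ and the term $c(\nN^\eps - \nN^0)(w_\eps)$ as a small perturbative error. The first step is to fix $M>0$ and the interval $\iI = [a,b]$ with $b-a\le 1$, and record that by Theorem~\ref{th:dodson} the genuine mass-critical solution $w$ on $\iI$ with $w(a) = w_\eps(a)$ (same $c$) satisfies $\|w\|_{\xX(\iI)} \le K_M$ for some $K_M$ depending on $M$ only, uniformly in $c\in(0,1]$ (and when $c$ is small, this is a direct consequence of the local theory as in the remark after Theorem~\ref{th:dodson}).

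Next I would set up a bootstrap/continuity argument in the right endpoint $r\in[a,b]$. Let $M_1 = M$ (mass conservation gives $\|w_\eps\|_{\xX_1} = \|w_\eps(a)\|_{L^2_x} \le M$ for the mass-critical flow, and the same holds essentially for $\nN^\eps$ since the nonlinearity is still gauge-invariant, so that $\|w_\eps\|_{\xX_1(\iI)} = \|w_\eps(a)\|_{L^2_x}$), and $M_2 = 2K_M + 1$. Let $\delta = \delta_{M_1,M_2}$ and $C = C_{M_1,M_2}$ be the constants from Proposition~\ref{pr:sta_eps}, applied with $w$ the mass-critical solution playing the role of ``$w_\eps$'' in that proposition (equivalently by symmetry, run stability the other way: $w_\eps$ solves the $\nN^\eps$ equation exactly, and $w$ solves it with error term $e = c(\nN^0 - \nN^\eps)(w)$, whose $L^1_t L^2_x$ norm on $[a,r]$ is controlled via Lemma~\ref{lem: prelwp}-type Hölder bounds by $C (r-a)^{\eps/4}\|w\|_{\xX_1}\|w\|_{\xX_2}^{4-\eps} \lesssim_M (b-a)^{\eps/4}$). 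Define the bootstrap set $\{r\in[a,b] : \|w_\eps\|_{\xX_2(a,r)} \le 2K_M + 1\}$; it is closed, nonempty (contains $a$), and I must show it is open, i.e. that on it one actually has $\|w_\eps\|_{\xX_2(a,r)}\le K_M + \tfrac12$.

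The key point in closing the bootstrap is that the error term $e = c(\nN^0 - \nN^\eps)(w) = c\,w(\,|w|^4 - |w|^{4-\eps}\,)$ is \emph{small} — not just bounded. Pointwise $||w|^4 - |w|^{4-\eps}| = |w|^{4-\eps}\big||w|^\eps - 1\big|$, and one splits according to whether $|w|\le 1$ or $|w|>1$; on $\{|w|>1\}$ one uses $|w|^\eps - 1 \le |w|^\eps \log|w| \cdot \eps \lesssim \eps |w|^\eps$ (or more crudely bounds by $|w|^4$ which is integrable in the Strichartz sense), and on $\{|w|\le 1\}$ one has $|1 - |w|^\eps| \le \eps|\log|w||$ which is harmless after integration, so that $\|e\|_{L^1_t L^2_x(a,r)} \to 0$ as $\eps\to 0$ \emph{uniformly over $r$ and over $c\in(0,1]$}, using only $\|w\|_{\xX(\iI)}\le K_M$. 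Hence there is $\eps_M>0$ such that for $\eps < \eps_M$ we have $\|e\|_{L^1_tL^2_x} \le \delta$ and $C\|e\|_{L^1_tL^2_x} \le \tfrac12$; Proposition~\ref{pr:sta_eps} then gives $\|w_\eps - w\|_{\xX(a,r)} \le \tfrac12$, so $\|w_\eps\|_{\xX_2(a,r)} \le K_M + \tfrac12$, closing the bootstrap and yielding $\|w_\eps\|_{\xX(\iI)} \le K_M + 1 =: \tilde D_M$ for all $\eps < \eps_M$. Finally, for the remaining regime $\eps \in [\eps_M, 1)$ — a ``uniformly away from critical'' range — the subcritical local theory (Lemma~\ref{lem: enhancedsmallness} together with partitioning $\iI$ into $O_M(1)$ subintervals on each of which the free evolution has small $\xX_2$ norm, which is possible since $\|e^{it\Delta}w_\eps(a)\|_{\xX_2(\iI)} \lesssim_M 1$ and is absolutely continuous) gives a bound $\|w_\eps\|_{\xX_2(\iI)}\lesssim_{M,\eps_M} 1$, and enlarging $\tilde D_M$ to absorb this finishes the proof.

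The main obstacle I anticipate is the uniformity of the smallness of the error term $e = c(\nN^0-\nN^\eps)(w)$ as $\eps\to 0$: one must make sure the bound on $\|e\|_{L^1_tL^2_x}$ depends on $w$ only through $\|w\|_{\xX(\iI)}$ (hence only through $M$, via Dodson) and is \emph{independent of $c\in(0,1]$} and of the endpoint $r$; the logarithmic factor $\log|w|$ near $|w|=0$ and the growth near $|w|=\infty$ both have to be tamed by interpolating against the fixed Strichartz-space bound on $w$, e.g. by writing $|w|^{4-\eps}\big||w|^\eps-1\big| \le \eps\,|w|^4\,|\log|w|| \cdot \mathbf 1_{|w|>1} + C\eps\,|w|^{4-\eps}|\log|w||\cdot\mathbf 1_{|w|\le 1}$ and absorbing the slowly varying logarithm by a tiny loss of exponent (Hölder with exponents $5^-$ and $10^-$ against $5$ and $10$), which is where the $b-a\le1$ normalisation and a small power of $(b-a)$ or $\eps$ get spent. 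Everything else is a routine continuity/bootstrap wrap-up.
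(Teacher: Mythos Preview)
There is a genuine gap in your argument, and it lies exactly where you flag the main obstacle. The smallness of the error $e = c(\nN^0 - \nN^\eps)(w)$ in $L^1_t L^2_x$ \emph{cannot} be made uniform over all mass-critical solutions $w$ with $\|w(a)\|_{L^2_x}\le M$, because that class is scale-invariant. Concretely: fix a nonzero $\phi$ with $\|\phi\|_{L^2}=M$, let $W$ be the mass-critical flow from $\phi$, and for each $\eps>0$ choose $\lambda = 4^{-1/\eps}$ so that $\lambda^{\eps/2}=\tfrac12$. Take initial data $w_\eps(a)=\lambda^{-1/2}\phi(\cdot/\lambda)$, still of mass $M$. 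The associated mass-critical solution is $w(t,x)=\lambda^{-1/2}W((t-a)/\lambda^2, x/\lambda)$, and a change of variables gives
\[
\|e\|_{L^1_tL^2_x(\iI)} \;=\; \big\|\,|W|^4W - \tfrac12\,|W|^{4-\eps}W\,\big\|_{L^1_sL^2_y\big([0,(b-a)\lambda^{-2}]\big)}.
\]
On any fixed window $[0,T_0]$ (contained in the rescaled interval once $\eps$ is small) the right-hand side converges, as $\eps\to 0$, to $\tfrac12\|W\|_{L^5L^{10}([0,T_0])}^5>0$. So $\|e\|_{L^1_tL^2_x}$ stays bounded away from zero along this family, and no threshold $\eps_M$ can make it $\le \delta$ uniformly in the data. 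Your proposed cure --- absorbing $\log|w|$ into a small power and H\"older-ing at $(5^-,10^-)$ --- does not work: it forces control of $w$ in $L^{5+\kappa}_tL^{2(5+\kappa)}_x$, and this pair satisfies $2/q+1/r<1/2$, so it lies strictly beyond the Strichartz line and is not bounded by Dodson's estimate.

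This scale-dependence is precisely why the paper's proof is built on concentration compactness. One argues by contradiction: if the bound failed, a sequence $\eps_n\to 0$ with $\|u_n\|_{\xX_2}\to\infty$ would exist; after profile decomposition each profile carries a scale $\lambda_{j,n}$, and the effective limiting equation at that scale depends on $\lim_n\lambda_{j,n}^{\eps_n}$ --- linear when this limit is $0$ or when $\lambda_{j,n}\to\infty$, and mass-critical with a modified coupling $c_j$ when $\lambda_{j,n}^{\eps_n}\to c_j^2>0$. Your argument is essentially the single case $\lambda_{j,n}\equiv 1$ of this analysis (Lemma~\ref{lem: epconvengence} in the paper, where for a \emph{fixed} profile the estimate does go through, with an extra smoothification step replacing your DCT-type bound); it cannot by itself establish Proposition~\ref{pr:Dodson_eps}.
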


Proposition \ref{pr:Dodson_eps} is main technical part of this article. Combining the above two propositions, we have the following. 

\begin{prop} [Strong uniform-in-$\eps$ stability]
	\label{pr:strong_sta_eps}
	Let $w_\eps \in \xX(\iI)$ be the solution to \eqref{eq:nls_w_eps}, and $v_\eps \in \xX(\iI)$, $e_\eps \in L_{t}^{1}L_{x}^{2}(\iI)$ satisfying \eqref{eq:nls_v_eps}. Then for every $M>0$, there exist $\delta_m, C_M > 0$ such that if $\|w_\eps(a)\|_{L_{x}^{2}} \leq M$ and
	\begin{equation*}
	\|v_\eps(a) - w_\eps(a)\|_{L_{x}^{2}} + \|e\|_{L_{t}^{1}L_{x}^{2}(\iI)} \leq \delta_M, 
	\end{equation*}
	then we have
	\begin{equation*}
	\|v_\eps - w_\eps\|_{\xX(\iI)} \leq C_M \big( \|v_\eps(a) - w_\eps(a)\|_{L_{x}^{2}} + \|e\|_{L_{t}^{1}L_{x}^{2}(\iI)} \big). 
	\end{equation*}
	The constants $\delta_M$ and $C_M$ depend on $M$ only. 
\end{prop}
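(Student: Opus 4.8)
The plan is to deduce Proposition~\ref{pr:strong_sta_eps} from the two preceding propositions -- the purely perturbative uniform-in-$\eps$ stability (Proposition~\ref{pr:sta_eps}) and the non-perturbative uniform-in-$\eps$ boundedness (Proposition~\ref{pr:Dodson_eps}) -- by a bootstrap (continuity) argument in the right endpoint of the interval. This is exactly the same scheme that the paper announces it will carry out in detail for the derivation of Proposition~\ref{pr:strong_sta_me} from Propositions~\ref{pr:sta_me} and~\ref{pr:bd_me}, so it suffices to run that argument in the $m=+\infty$ setting. The point of the bootstrap is that Proposition~\ref{pr:sta_eps} requires an a priori bound $\|v_\eps\|_{\xX_2(\iI)}\le M_2$, whereas here we are only given a bound on $\|w_\eps(a)\|_{L_x^2}$; the role of Proposition~\ref{pr:Dodson_eps} is to supply the missing $\xX_2$-control on $w_\eps$, and hence, once the difference $v_\eps-w_\eps$ is small, on $v_\eps$ as well.

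First I would fix $M$ and apply Proposition~\ref{pr:Dodson_eps} to get $\|w_\eps\|_{\xX_2(\iI)}\le \tilde D_M$; together with mass conservation (or the standard a priori bound) for $w_\eps$ this also gives $\|w_\eps\|_{\xX_1(\iI)}\lesssim M$, so $\|w_\eps\|_{\xX(\iI)}\le D'_M$ for a constant depending only on $M$. Set $M_1 := 2D'_M$ and $M_2 := 2\tilde D_M$, and let $\delta_0=\delta_{M_1,M_2}$ and $C_0=C_{M_1,M_2}$ be the constants furnished by Proposition~\ref{pr:sta_eps} for these values. Now for $r\in[a,b]$ consider the hypothesis
\begin{equation*}
H(r):\qquad \|v_\eps\|_{\xX_1(a,r)}\le M_1,\qquad \|v_\eps\|_{\xX_2(a,r)}\le M_2.
\end{equation*}
Whenever $H(r)$ holds and the data are $\delta$-small with $\delta\le\delta_0$, Proposition~\ref{pr:sta_eps} applies on $[a,r]$ and yields
\begin{equation*}
\|v_\eps-w_\eps\|_{\xX(a,r)}\le C_0\big(\|v_\eps(a)-w_\eps(a)\|_{L_x^2}+\|e_\eps\|_{L_t^1L_x^2(a,r)}\big)\le C_0\,\delta.
\end{equation*}
Hence $\|v_\eps\|_{\xX(a,r)}\le \|w_\eps\|_{\xX(a,r)}+C_0\delta\le D'_M+\tilde D_M+C_0\delta$. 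Choosing $\delta_M:=\min\{\delta_0,\ D'_M/C_0,\ \tilde D_M/C_0\}$ (all depending on $M$ only), this upgrades $H(r)$ to the strictly better bounds $\|v_\eps\|_{\xX_1(a,r)}\le 2D'_M\ (=M_1)$ with a strict inequality available because of the slack, and likewise in $\xX_2$. Since the maps $r\mapsto\|v_\eps\|_{\xX_1(a,r)}$ and $r\mapsto\|v_\eps\|_{\xX_2(a,r)}$ are continuous, nondecreasing, and equal $0$ respectively $\|w_\eps(a)\|_{L_x^2}\le M<M_1$ at $r=a$, the set of $r$ satisfying $H(r)$ is open, closed and nonempty in $[a,b]$, hence all of $[a,b]$. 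Taking $r=b$ and invoking Proposition~\ref{pr:sta_eps} one last time gives the claimed estimate with $C_M:=C_0$ and $\delta_M$ as above, both depending on $M$ only and not on $\eps$, $c$ or $\iI$ (beyond $b-a\le 1$).

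The main obstacle is purely one of bookkeeping rather than of substance: one must choose the threshold radii $M_1,M_2$ strictly larger than the bounds for $w_\eps$ coming from Proposition~\ref{pr:Dodson_eps}, and then choose $\delta_M$ small enough that the perturbative gain $C_0\delta$ cannot consume the gap, so that the bootstrap genuinely closes with a strict improvement (this is what makes the ``good'' set open). A secondary point worth checking is that Proposition~\ref{pr:sta_eps} as stated needs $v_\eps,e_\eps$ to satisfy \eqref{eq:nls_v_eps} on the sub-interval $[a,r]$, which is automatic by restriction, and that its constants are non-increasing in $M_1,M_2$ so that a single choice works uniformly along the bootstrap; both are standard. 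Since this argument is, as the authors note, ``in principle the same'' as the one written out for Proposition~\ref{pr:strong_sta_me}, I would either present it in full once (in the $m$-truncated case) and remark that the $m=+\infty$ case is identical, or simply state Proposition~\ref{pr:strong_sta_eps} as a corollary of Propositions~\ref{pr:sta_eps} and~\ref{pr:Dodson_eps} by the same bootstrap.
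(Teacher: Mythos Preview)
Your bootstrap-in-$r$ argument is correct and uses Propositions~\ref{pr:sta_eps} and~\ref{pr:Dodson_eps} exactly as intended; the only slip is that at $r=a$ one has $\|v_\eps\|_{\xX_1(a,a)}=\|v_\eps(a)\|_{L_x^2}\le M+\delta_M$ rather than $\|w_\eps(a)\|_{L_x^2}$, but this is harmless since $M_1=2D'_M\ge 2M$.

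The paper takes a somewhat different route. Rather than invoking Proposition~\ref{pr:sta_eps} as a black box and running a continuity argument in the endpoint $r$, it re-opens the \emph{proof} of Proposition~\ref{pr:sta_me}/\ref{pr:sta_eps} (the interval-dissection argument in Appendix~\ref{secproofofprstame}) and observes that the only role of the hypothesis $\|v\|_{\xX_2(\iI)}\le M_2$ there was to guarantee a dissection $a=\tau_0<\dots<\tau_K=b$ with $\|v\|_{\xX_2(\tau_k,\tau_{k+1})}^5\le\eta$ and $K$ controlled. Since Proposition~\ref{pr:Dodson_eps} supplies $\|w_\eps\|_{\xX_2(\iI)}\le\tilde D_M$, one can instead dissect according to $\|w_\eps\|_{\xX_2(\tau_k,\tau)}^5=\eta$ and rerun the same iteration verbatim, with the roles of $v$ and $w$ swapped in the pointwise estimates for $\gG_1,\gG_2$. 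Your approach is more modular (it treats the weak stability as a finished tool), while the paper's is slightly more economical in that it avoids the continuity-in-$r$ step and yields the constants directly from the dissection count $K\le 1+\tilde D_M^5/\eta$. Either proof is fine here; your remark that one should write it out once (for Proposition~\ref{pr:strong_sta_me}) and note the $m=+\infty$ case is identical matches what the paper does.
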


Again, the derivation of Proposition \ref{pr:strong_sta_me} from Proposition \ref{pr:sta_eps} and \ref{pr:Dodson_eps} is similar to the derivation of  Proposition~\ref{pr:strong_sta_me} from Proposition~\ref{pr:bd_me}, Propostion ~\ref{pr:sta_me}, which is put in the appendix.

\subsection{Propositions~\ref{pr:Dodson_eps}+\ref{pr:strong_sta_eps} imply Proposition~\ref{pr:bd_me}}

We now show that Proposition~\ref{pr:bd_me} is a consequence of Propositions~\ref{pr:Dodson_eps} and \ref{pr:strong_sta_eps}. Let $w$ be the solution to \eqref{eq:nls_w_me} with $\|w(a)\|_{L_x^2} \leq M$. We need to show $\|w\|_{\xX_2(\iI)} \leq \tilde{D}$ for some $\tilde{D}$ depending on $M$ only. 

If $m$ is smaller than some (possibly large) constant depending on $M$, then $w$ will satisfy a free equation after $\|w\|_{\xX_2}$ reaches that constant, and hence one has the desired control. Thus, it suffices to consider the case for large $m$. We re-write the equation for $w$ as
\begin{equation*}
i \d_t w + \Delta w = \theta_{m}(A) \nN^{\eps}(w) + e\;, \quad \|w(a)\|_{L_x^2} \leq M\;, 
\end{equation*}
where
\begin{equation*}
e = \Big( \theta_{m}\big( A + \|w\|_{\xX_2(a,t)}^{2} \big) - \theta_{m}(A) \Big) \nN^{\eps}(w)
\end{equation*}
satisfies the pointwise bound
\begin{equation*}
|e(t)| \leq \frac{C}{m} \|w\|_{\xX_2(a,t)}^{5} |w(t)|^{5-\eps}. 
\end{equation*}
We are now in the form of Proposition \ref{pr:strong_sta_eps} with $v_\eps = w$ and $\lambda_\eps = \theta_{m}(A)$. We need to show that $\|e\|_{L_{t}^{1}L_{x}^{2}(\iI)}$ is small for large $m$. Indeed, there exists $C_0>0$ universal such that
\begin{equation} \label{eq:bootstrap_e}
\|e\|_{L_{t}^{1}L_{x}^{2}(a,r)} \leq \frac{C_0}{m} \|w\|_{\xX(a,r)}^{10-\eps}
\end{equation}
for every $r \in [a,b]$. Let $w_\eps$ be the solution to \eqref{eq:nls_w_eps} with $w_\eps(a) = w(a)$. Let $B_M$, $\delta_M$ and $C_M$ be as in Propositions~\ref{pr:Dodson_eps} and \ref{pr:strong_sta_eps}. Let $m$ be sufficiently large so that
\begin{equation} \label{eq:choice_m}
\frac{C_0}{m} \cdot \Big( 2(M+ D_M + C_M \delta_M) \Big)^{10} \leq \delta_M. 
\end{equation}
This choice of $m$ depends on $M$ only. We now use a standard bootstrap argument to show that $\|w\|_{\xX(a,r)}$ will never exceed $2(M+D_M + C_M \delta_M)$ on $[a,b]$. We first note that $\|w\|_{\xX(a,a)} \leq M$. Suppose $r \in [a,b]$ is such that
\begin{equation*}
\|w\|_{\xX(a,r)} \leq 2 (M + D_M + C_M \delta_M). 
\end{equation*}
Then the bound \eqref{eq:bootstrap_e} and the choice of $m$ in \eqref{eq:choice_m} imply that
\begin{equation*}
\|e\|_{L_{t}^{1} L_{x}^{2}(a,r)} \leq \delta_M. 
\end{equation*}
Hence, by Proposition~\ref{pr:Dodson_eps}, we have
\begin{equation*}
\|w - w_\eps\|_{\xX(a,r)} \leq C_M \delta_M, 
\end{equation*}
which in turn implies
\begin{equation*}
\|w\|_{\xX(a,r)} \leq \|w_\eps\|_{\xX(a,r)} + C_M \delta_M \leq M + D_M + C_M \delta_M. 
\end{equation*}
Thus, we deduce that $\|w\|_{\xX(a,r)}$ can never reach $2 (M + D_M + C_M \delta_M)$ for $r \in [a,b]$, and in particular, we have the bound
\begin{equation*}
\|w\|_{\xX(\iI)} \leq 2(M + D_M + C_M \delta_M). 
\end{equation*}
This is true whenever $m$ satisfies \eqref{eq:choice_m}. On the other hand, if $m$ violates \eqref{eq:choice_m}, then $w$ satisfies the free equation after $\|w\|_{\xX_2(a,r)}$ reaches a large constant depending on $M$ only, and hence we also have the desired bounds in that case.

\section{Overview for the proof of Proposition \ref{pr:Dodson_eps}}\label{secroadmap2}

Fix any $\eps>0$, Proposition \ref{pr:Dodson_eps} clearly holds by the local theory of mass subcrtical NLS. Indeed, for any fixed $\eps_{0}>0$, the local theory of mass subcritical NLS implies \ref{pr:Dodson_eps} for $1>\eps>\eps_{0}$. On the other hand, when $\eps=0$, the end point case is just Theorem \ref{th:dodson}. Thus, the idea is to use some (concentration) compactness argument to push the potential "counter-example" of Propostion \ref{pr:Dodson_eps} to the end point $\eps=0$. We will indeed prove by contradiction and show no such counter example exists.

For notational simplicity, from now on, we assume without loss of generality that $\iI=[0,1]$. 

\subsection{Concentration compactness}
Now we introduce the preliminary  for concentration compactness. Concentration compactness is also refered as profile decompostion in the literature. We first introduce some notations.
\begin{defn}
We define unitary group acting on $L_{x}^{2}(\RRR^{d})$ as  $G:=\{g_{x_{0},\xi_{0},\lambda_{0}, t_{0}}| x_{0},\xi_{0}\in \RRR^{d}, \lambda_{0}\in \RR^{+}, t_{0}\in \RRR\}$, and 
\begin{equation}
(g_{x_{0},\xi_{0},\lambda_{0},t_{0}}f)(x) := \lambda_0^{-\frac{d}{2}} e^{ix\xi_{0}} \big(e^{i(-\frac{t_{0}}{\lambda_{0}^{2}})\Delta}f \big)\big(\frac{x-x_{0}}{\lambda_{0}}\big).
\end{equation}
\end{defn}

Now we are ready to state the concentration compactness. 

\begin{prop}[\cite{merle1998compactness},\cite{carles2007role}, \cite{begout2007mass}, \cite{tao2008minimal}]\label{Prop: profiledecomposition}
Let $(q,r)$ be an admissible pair in dimension $d$ in the sense that
\begin{equation*}
\frac{2}{q} + \frac{d}{r} = \frac{d}{2}. 
\end{equation*}
Let $\{f_{n}(x)\}_{n=1}^{\infty}$ be bounded in $L_{x}^{2}(\RRR^{d})$. Up to picking subsequence, there exist a family of $L^{2}(\RRR^{d})$ functions $\{\phi_{j}\}_{j}$, 
 and group elements $g_{j,n}=g_{x_{j,n},\xi_{j,n},\lambda_{j,n},t_{j,n}}$, such that for all $ J=1,2,\cdots$, the decomposition
\begin{equation}\label{eq: tempdeco}
f_{n}= \sum_{j=1}^{J}g_{j,n}\phi_{j}+\omega_{n}^{J},
\end{equation}
satisfies the following properties: 
\begin{equation}\label{eq: orthpara}
\forall j\neq j',
\lim_{n\rightarrow \infty}\frac{\lambda_{j,n}}{\lambda_{j',n}}+\frac{\lambda_{j',n}}{\lambda_{j,n}}+|\frac{x_{j,n}-x_{j',n}}{\lambda_{j,n}}|+|\lambda_{j,n}(\xi_{j,n}-\xi_{j',n})|+|\frac{t_{j,n}-t_{j',n}}{\lambda_{j,n}^{2}}|=\infty.
\end{equation}
\begin{equation}\label{eq: orthstri}
\forall j\neq j', \lim_{n\rightarrow \infty}\|e^{it\Delta}(g_{j,n}\phi_{j})e^{it\Delta}(g_{j',n}\phi_{j'})\|_{L_{t}^{\frac{q}{2}}L_{x}^{\frac{r}{2}}(\RR \times \RR^d)}=0.
\end{equation}
\begin{equation}\label{eq: orthmass}
\forall J\geq 1, \lim_{n\rightarrow \infty}|\|f_{n}\|_{L_{x}^{2}}^{2}-\sum_{j\leq J}\|\phi_{j}\|_{L_{x}^{2}}-\|\omega^{J}_{n}\|_{L_{x}^{2}}^{2}|=0.
\end{equation}

\begin{equation}\label{eq:smallnessofmass}
\lim_{J \rightarrow\infty} \limsupn \|e^{it\Delta}\omega_{n}^{J}\|_{L_{t}^{q}L_{x}^{r}(\RR \times \RR^d)}=0.
\end{equation}

\begin{equation}\label{eq: weakconve}
\forall j\leq J, g_{j,n}^{-1}\omega^{J}_{n}\rightharpoonup 0 .
\end{equation}
We call  $\{f_{n}\}_{n}$ admits a profile decomposition with profiles $\{\phi_{j}; \{x_{j,n}, \xi_{j,n}, \lambda_{j,n},t_{j,n}\}_{n}\}_{j}$.
\end{prop}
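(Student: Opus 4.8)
The statement is a by-now classical concentration-compactness decomposition (see \cite{merle1998compactness,carles2007role,begout2007mass,tao2008minimal}); the plan is to prove it by extracting profiles one at a time, the only genuinely non-soft input being a refined (``inverse'') Strichartz inequality. I would carry out the extraction at one fixed admissible pair, say the mass-critical one $(q,r)=(\tfrac{2(d+2)}{d},\tfrac{2(d+2)}{d})$; smallness of the remainder in a general admissible $L^q_tL^r_x$ with $r<\infty$ then follows by interpolation together with Strichartz bounds for the (almost-orthogonal) sum of the extracted profiles.

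The first and main analytic step is the inverse Strichartz inequality. Starting from the refined Strichartz estimate of Bourgain / Moyua--Vargas--Vega / Carles--Keraani / B\'egout--Vargas,
\[
\|e^{it\Delta}f\|_{L^{q}_{t,x}(\RR\times\rd)} \;\lesssim\; \|f\|_{L^2(\rd)}^{1-\theta}\,\Big(\sup_{Q}|Q|^{-\alpha}\,\|\widehat f\,\one_Q\|_{L^2}\Big)^{\theta}
\]
(supremum over dyadic frequency cubes, with explicit $\theta\in(0,1)$ and $\alpha>0$), one deduces: if $\|f_n\|_{L^2}\le A$ and $\liminf_n\|e^{it\Delta}f_n\|_{L^{q}_{t,x}}\ge\eta>0$, then after passing to a subsequence there are $x_n,\xi_n\in\rd$, $\lambda_n>0$, $t_n\in\RR$ and $\phi\in L^2$ with $g_{x_n,\xi_n,\lambda_n,t_n}^{-1}f_n\rightharpoonup\phi$ in $L^2$ and $\|\phi\|_{L^2}\gtrsim A(\eta/A)^{\beta}$ for some $\beta>0$. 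The parameters are produced concretely: pick a near-optimal cube $Q$, translate its centre to $0$ in frequency (this is $\xi_n$), rescale so that $|Q|\sim1$ (this is $\lambda_n$), use the dispersive estimate to find a space-time point where $|e^{it\Delta}f_n|$ is comparably large (this gives $x_n$ and $t_n/\lambda_n^2$), and test against a fixed bump.

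With this in hand I would run the induction. Set $\omega_n^0:=f_n$ and $A:=\sup_n\|f_n\|_{L^2}$. Given $\{\omega_n^{J-1}\}_n$, pass to a subsequence so $\eta_J:=\lim_n\|e^{it\Delta}\omega_n^{J-1}\|_{L^{q}_{t,x}}$ exists; if $\eta_J=0$, stop and set $\phi_j=0$ for $j\ge J$. Otherwise the inverse Strichartz inequality yields group elements $g_{J,n}$ with, along a further subsequence, $g_{J,n}^{-1}\omega_n^{J-1}\rightharpoonup\phi_J$ and $\|\phi_J\|_{L^2}\gtrsim A(\eta_J/A)^{\beta}>0$; put $\omega_n^{J}:=\omega_n^{J-1}-g_{J,n}\phi_J$. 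Weak convergence gives the Pythagorean splitting $\|\omega_n^{J-1}\|_{L^2}^2=\|\phi_J\|_{L^2}^2+\|\omega_n^{J}\|_{L^2}^2+o_n(1)$, which iterated is \eqref{eq: orthmass}; since $\sum_j\|\phi_j\|_{L^2}^2\le A^2<\infty$ we get $\|\phi_J\|_{L^2}\to0$, hence $\eta_J\to0$, which is \eqref{eq:smallnessofmass}. A diagonal argument over $J$ produces a single subsequence valid for all $J$ simultaneously. The weak vanishing \eqref{eq: weakconve}, $g_{j,n}^{-1}\omega_n^{J}\rightharpoonup0$ for $j\le J$, holds by construction for $j=J$ and propagates downward because each later subtraction contributes $g_{j,n}^{-1}g_{\ell,n}\phi_\ell$ ($\ell>j$), which tends weakly to $0$ as the parameters of $g_{j,n}^{-1}g_{\ell,n}$ diverge. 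Parameter orthogonality \eqref{eq: orthpara} is then forced: if the displayed quantity stayed bounded for some $j<j'$, then $g_{j,n}^{-1}g_{j',n}$ would converge to a fixed group element and, combining $g_{j',n}^{-1}\omega_n^{j'-1}\rightharpoonup\phi_{j'}$ with $g_{j,n}^{-1}\omega_n^{j'-1}\rightharpoonup0$, one gets $\phi_{j'}=0$, contradicting $\|\phi_{j'}\|_{L^2}>0$. Finally \eqref{eq: orthstri} follows from \eqref{eq: orthpara}: approximating $\phi_j,\phi_{j'}$ by Schwartz bumps and using the dispersive bound, divergence of any one of the four ratios (scales, space centres, frequency centres, time shifts) makes the bulk of $e^{it\Delta}(g_{j,n}\phi_j)$ and of $e^{it\Delta}(g_{j',n}\phi_{j'})$ live on asymptotically disjoint regions of $\RR\times\rd$.

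The main obstacle is the first step: the refined Strichartz inequality is the only place a restriction/bilinear improvement over the plain Strichartz estimate enters, and it is precisely what makes the extracted weak limits quantitatively nonzero --- without it one only gets the useless statement that some weak limit exists. Everything after that is soft analysis (weak-$*$ sequential compactness, diagonal extraction) plus bookkeeping; the most delicate bookkeeping is tracking the four-parameter symmetry group through the induction and the orthogonality arguments, especially the time-translation parameters $t_{j,n}$, which do not commute with the free evolution and must be carried along inside $e^{it\Delta}$.
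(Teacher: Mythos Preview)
The paper does not prove this proposition; it is quoted from the literature (\cite{merle1998compactness,carles2007role,begout2007mass,tao2008minimal}) and used as a black box. Your sketch is precisely the standard route taken in those references --- refined/inverse Strichartz to extract a nontrivial bubble, iterate, diagonalise, and read off mass decoupling, parameter orthogonality, and Strichartz orthogonality in that order --- so there is nothing to compare against in the paper itself, and your outline is correct.
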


\begin{rem}\label{symmetryrmk}
Those parameters takes into account the symmetry for both mass critical nonlinear Schr\"odinger equation and linear Schr\"odinger equation. In particular, let $\phi\in L_{x}^{2}$, and let $x_{0},\xi_{0}, \lambda_{0}, t_{0}$ be parameters. Let $\Phi$ solves for 
\begin{equation}
\begin{cases}
i\partial_{t}\Phi+\Delta \Phi=c|\Phi|^{4/d}\Phi,\\
\Phi(-\frac{t_{0}}{\lambda_{0}^{2}})=\phi(x)
\end{cases}
\end{equation}

Then we have
$\Psi(t,x)=\frac{1}{\lambda_{0}^{d/2}}e^{i\xi_{0}x}e^{-i\xi_{0}^{2}t}\Phi(\frac{t-t_{0}}{\lambda_{0}^{2}},\frac{x-x_{0}-2\xi_{0}t}{\lambda_{0}})$ solves

\begin{equation}
\begin{cases}
i\partial_{t}\Psi+\Delta \Psi=-|\Psi|^{4/d}\Psi,\\
\Psi(0)=\frac{1}{\lambda_{0}^{d/2}}\phi(\frac{x-x_{0}}{\lambda_{0}}),
\end{cases}
\end{equation}

\end{rem}

\begin{rem}\label{rem: symmetrykeepsnorm}
For any $(q,r)$ are admissable (that is, $\frac{2}{q}+\frac{d}{r}=\frac{d}{2}$), and $\Phi(t,x)\in L_{t}^{q}L_{x}^{r}$, via a change of variables, one has
\begin{equation} \label{eq: naturalscaling}
\|\Phi\|_{L_{t}^{q}L_{x}^{r}(\RRR\times \RRR^{d})}=  \Big\|\lambda_{0}^{-\frac{d}{2}}e^{i\xi_{0}x}e^{-i\xi_{0}^{2}t} \cdot \Phi\Big(\frac{t-t_{0}}{\lambda_{0}^{2}},\frac{x-x_{0}-2\xi_{0}t}{\lambda_{0}}\Big) \Big\|_{L_{t}^{q}L_{x}^{r}(\RR \times \RR^d)}.
\end{equation}
\end{rem}
\begin{rem}\label{rem: orthogonality}
Estimate \ref{eq: orthstri} does not indicate $\phi_{j}$ and $\phi_{j}'$ are orthogonal in any sense. It is totally possible that $\phi_{j}=\phi_{j'} $ for some $j\neq j'$. Estimate \eqref{eq: orthstri} is a direct consequence of \eqref{eq: orthpara}. Indeed, for $(q,r)$ admissible and $\Psi, \Phi$ in $L_{t}^{q}L_{x}^{r}$. Let $\Psi_{j,n}=\frac{1}{\lambda_{j,n}^{d/2}}e^{i\xi_{j,n}x}e^{-i\xi_{j,n}^{2}t}\Psi(\frac{t-t_{jn}}{\lambda_{j,n}^{2}},\frac{x-x_{j,n}-2\xi_{j,n}t}{\lambda_{j,n}})$, and $\Phi_{j,n}=\frac{1}{\lambda_{j,n}^{d/2}}e^{i\xi_{j,n}x}e^{-i\xi_{j,n}^{2}t}\Phi(\frac{t-t_{j, n}}{\lambda_{j,n}^{2}},\frac{x-x_{j,n}-2\xi_{j,n}t}{\lambda_{j,n}})$. Then \eqref{eq: orthpara} implies
\begin{equation}
\lim_{n\rightarrow \infty} \|\Phi_{j,n} \Psi_{j',n}\|_{L_{t}^{q/2}L_{x}^{r/2}(\RRR\times \RRR^{d})}=0, \forall j\neq j'.
\end{equation}
\end{rem}

\begin{rem}\label{rem: useful}
A typical feature in the application of concentration compactness is that a lot of subsequence will be taken. So it is typical to assume all the limit involved, actually exists or equal to $\pm \infty$. For example,  we may always assume $\limn\frac{-t_{j,n}}{\lambda_{j,n}^{2}}$ exists or equal to $\pm \infty$. Up to further adjusting profile, we may assume  $t_{j,n}\equiv 0$ if $lim_{n}\frac{-t_{j,n}}{\lambda_{j,n}^{2}}$ exists.
\end{rem}

With Remark \ref{rem: useful}, one is led to  the following standard notion:
\begin{defn}
We call a profile $f$ with parameter $\{x_{n},\xi_{n},\lambda_{n},t_{n}\}_{n}$
\begin{enumerate}
\item Compact profile if $t_{n}\equiv 0$,
\item Backward scattering profile if $\lim_{n\rightarrow \infty}-\frac{t_{n}}{\lambda_{n}^{2}}=-\infty$.
\item Forward scattering profile if $\lim_{n\rightarrow \infty}-\frac{t_{n}}{\lambda_{n}^{2}}=\infty$,
\end{enumerate}
\end{defn}

\subsection{A stability type proposition}
Let $\{f_{n}\}_{n}$ be bounded in $L_{x}^{2}$  and  admits profile decomposition with profiles $\{\phi_{j}\}_{j}$  with parameters 
$\{ x_{j,n},\xi_{j,n},\lambda_{j,n}, t_{j,n} \}_{j,n}$. Let $\eps_{n}\rightarrow 0$. We needs to understand the solution $u_{n}$ to the following equations within time interval $[0,1]$, for $n$ large
\begin{equation}\label{eq: epequation}
\begin{cases}
i\partial_{t}u_{n}+\Delta u_{n}=N^{\eps_{n}}(u_{n}),
u_{n}(0,x)=f_{n}
\end{cases}
\end{equation}
The main difference, compare the with the usual setting of profile decomposition applied to mass critical nonlinear Schr\"odinger equations (some reference should be put here), is the parameter $\eps_{n}$ destroys the scaling symmetry of the equation. Recall if $u(t,x)$ solves
\begin{equation}
i\partial_{t}u+\Delta u= |u|^{4}u
\end{equation}
with initial data $u(0)=u_{0}$, then $\frac{1}{\lambda^{1/2}}u(\frac{t}{\lambda^{2}}, \frac{x}{\lambda})$ solves the same equation with initial data $\frac{1}{\lambda}u_{0}(x/\lambda).$ This does not work for  \eqref{eq: epequation}.

Still, before we state the main proposition in this subsection, we introduce the usual notion of nonlinear profile. But we are only interested in backward scattering profile and compact profile for the purpose of our work. 

\begin{defn}
Let $0<c^{*}\leq 1$ be a number. Suppose we are given a  linear profile, i.e. an $L^{2}$ function $\phi(x)$ with parameters $\{x_{n},\xi_{n}, \lambda_{n}, t_{n}\}_{n}$. If it is a compact profile, i.e. $t_{n}\equiv 0$, we define its associated nonlinear profile with parameter $c^{*}$, as the solution $\Phi$ which solves
\begin{equation}
\begin{cases}
i\partial_{t}\Phi(t,x)+\Delta \Phi(t,x)=c^{*}|\Phi|^{4}\Phi, \\
\Phi(0) = \phi, 
\end{cases}
\end{equation}
If it is a back forward scattering profile, we define its associated  nonlinear profile with parameter $c^{*}$ as the solution $\Phi$ to
\begin{equation}
i\partial_{t}\Phi(t,x)+\Delta \Phi(t,x)=c^{*}|\Phi|^{4}\Phi\\
\end{equation}
and 
\begin{equation}
\lim_{n\rightarrow \infty}\|\Phi(-\frac{t_{n}}{\lambda_{n}^{2}})-e^{(-t_{n}/\lambda_{n}^{2})i\Delta}\phi(x)\|_{L_{x}^{2}}=0.
\end{equation}
\end{defn}
\begin{rem} 
The parameter $x_{n}$ and $\xi_{n}$ will not have a effect in this definition.
\end{rem}
\begin{rem}
The existence and uniqueness of such nonlinear profile is well known and essentially  equivalent to local theory for mass critical nonlinear Schrodinger equation, one may  refer to  Notation 2.6 in \cite{duyckaerts2009universality}. Usually, one only consider the case for $c^{*}=1$, but we have introduce this extra parameter for later use. 
\end{rem} 

Now, we are ready to state our results
\begin{prop}\label{propositon: techicnalmain}
Let $\{f_{n}\}_{n}$ be a bounded sequence in $L_{x}^{2}(\RRR)$ admits profile decomposition with profiles $\{\phi_{j}; \{x_{j,n}, \xi_{j,n}, \lambda_{j,n},t_{j,n}\}_{n}\}_{j}$. Note that for all $J>0$, we have 
\begin{equation}
f_{n}(x)=\sum_{j\leq J}\frac{1}{\lambda_{j,n}^{\frac{1}{2}}}e^{ix\xi_{j,n}}(e^{i(-\frac{t_{j,n}}{\lambda_{j,n}^{2}})\Delta}\phi_{j})(\frac{x-x_{j,n}}{\lambda_{j,n}})+\omega^{J}_{n}.
\end{equation}
Let $\eps_{n}\rightarrow 0$. Let $u_{n}$ be the solution to
\begin{equation}
\begin{cases}
i\partial_{t}u_{n}+\Delta u_{n}=|u_{n}|^{4-\eps_{n}}u_{n},\\
u_{n}(0)=f_{n}
\end{cases}
\end{equation}

Then one has that for every $\kappa>0$, there exists some 	$J>0$, so that
\begin{equation}\label{eq: ultrastable}
\limsup_{n\rightarrow \infty}\|u_{n}-\sum_{j\leq J}\Psi_{j,n}-e^{it\Delta}\omega_{n}^{J}\|_{L_{t}^{5}L_{x}^{10}[0,1]}\leq \kappa
\end{equation}

Here the $\Psi_{j,n}$ is defined as following:
For notation convenience, for each profile $\phi_{j}$ with parameters $ \{x_{j,n}, \xi_{j,n}, \lambda_{j,n},t_{j,n}\}_{n}$. We denote $\phi_{j,n}:=\frac{1}{\lambda_{j,n}^{\frac{1}{2}}}e^{ix\xi_{j,n}}(e^{i(-\frac{t_{j,n}}{\lambda_{j,n}^{2}})\Delta}\phi_{j})(\frac{x-x_{j,n}}{\lambda_{j,n}})$, we denote $\Phi_{j}^{L}:=e^{it\Delta}\phi_{j}$
\begin{enumerate}
\item If $\lambda_{j,n}\xrightarrow{n\rightarrow \infty} \infty $, we let 
\begin{equation}
\Psi_{j}=\Phi_{j}^{L}, \Psi_{j,n}=e^{it\Delta}\phi_{j,n}\equiv \frac{1}{\lambda_{j,n}^{1/2}}e^{ix {\xi_{j,n}}}e^{-it|\xi_{j,n}|^{2}}\Psi_{j}(\frac{t-t_{j,n}}{\lambda_{j,n}^{2}},\frac{x-x_{j,n}-2\xi_{j,n}t}{\lambda_{j,n}})
\end{equation}

\item If $\lambda_{j,n}=1, \forall n$, we let  $\Phi_{j} $ be the associated nonlinear profile with parameter $1$, and let
\begin{equation}
\Psi_{j}=\Phi_{j}, \Psi_{j,n}=e^{i \xi_{j,n} x} e^{- i \xi_{j,n}^{2} t}\frac{1}{\lambda_{j,n}^{1/2}}\Psi_{j}(\frac{t-t_{j,n}}{\lambda_{j,n}^{2}},\frac{x-x_{j,n}-2\xi_{j,n}t}{\lambda_{j,n}})
\end{equation}

\item If $\lambda_{j,n}\rightarrow 0$, we distinguish two different cases, (up to picking subsequence, we can always assume at least
 one of the following cases exists)
\begin{itemize}
\item $\eps_{n}$ asymptotically large in the sense $\lim_{n\rightarrow} \lambda_{n}^{\eps_{n}}=0.$ we let
\begin{equation}
\Psi_{j}=\Phi_{j}^{L}, \Psi_{j,n}= \frac{1}{\lambda_{j,n}^{1/2}}e^{ix {\xi_{j,n}}}e^{-it|\xi_{j,n}|^{2}}\Psi_{j}(\frac{t-t_{j,n}}{\lambda_{j,n}^{2}},\frac{x-x_{j,n}-2\xi_{j,n}t}{\lambda_{j,n}})
\end{equation}
\item $\eps_{n}$ asymptotically small in the sense $\lim_{n\rightarrow} \lambda_{n}^{\eps_{n}}=c^{2}_{j}>0.$, we let $\Phi_{j}$ be the associated nonlinear profile with parameter $c_{j}>0$, let 
\begin{equation}
\Psi_{j}=\Phi_{j},
\Psi_{j,n}= \frac{1}{\lambda_{j,n}^{1/2}}e^{ix {\xi_{j,n}}}e^{-it|\xi_{j,n}|^{2}}\Psi_{j}(\frac{t-t_{j,n}}{\lambda_{j,n}^{2}},\frac{x-x_{j,n}-2\xi_{j,n}t}{\lambda_{j,n}})
\end{equation}
\end{itemize}
\end{enumerate}
Finally,  with \eqref{eq: ultrastable}, one can derive
\begin{equation}\label{eq: finalnormcontrol}
\limsup_{n} \|u_{n}\|^{5}_{L_{t}^{5}L_{x}^{10}}\lesssim\limsup_{n}\sum_{j=1}^{\infty}\|\Psi_{j,n}\|^{5}_{L_{t}^{5}L_{x}^{10}(\spacetime)}\leq \sum_{j=1}^{\infty}\|\Psi_{j,n}\|^{5}_{L_{t}^{5}L_{x}^{10}(\RRR\times \RRR)}<\infty.
\end{equation}
\end{prop}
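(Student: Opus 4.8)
The plan is to prove Proposition~\ref{propositon: techicnalmain} by a profile-decomposition stability argument adapted to the $\eps_n$-perturbed nonlinearity, treating the statement as a quantitative version of the fact that the nonlinear evolution $u_n$ is well-approximated by the superposition of its nonlinear profiles plus a free error. The central tool is the uniform-in-$\eps$ stability Proposition~\ref{pr:sta_eps} (more precisely its strong form Proposition~\ref{pr:strong_sta_eps}), which lets us compare the true solution $u_n$ of $i\d_t u_n + \Delta u_n = \nN^{\eps_n}(u_n)$ with the approximate solution $\tilde u_n^J := \sum_{j\le J}\Psi_{j,n} + e^{it\Delta}\omega_n^J$, provided we can show $\tilde u_n^J$ solves the same equation up to a small error in $L_t^1 L_x^2$ and has a priori bounded $\xX$-norm independent of $n$ and $J$.

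First I would record the a priori bound on the approximate solution. Each nonlinear profile $\Psi_j$ is, by construction, either a free evolution (in the cases $\lambda_{j,n}\to\infty$, or $\lambda_{j,n}\to 0$ with $\lambda_n^{\eps_n}\to 0$) or the nonlinear evolution of a mass-critical equation with parameter $c^*\in(0,1]$; in the latter cases Theorem~\ref{th:dodson} (Dodson scattering, uniform in $c$) gives $\|\Psi_j\|_{\xX_2(\RR\times\RR)}\lesssim_{\|\phi_j\|_2}1$, and in the free cases Strichartz gives the same. Because the $\Psi_{j,n}$ are obtained from the $\Psi_j$ by the symmetry group (Remarks~\ref{rem: symmetrykeepsnorm} and~\ref{rem: orthogonality}), their $L_t^5L_x^{10}$ norms are preserved, and the orthogonality relations \eqref{eq: orthpara}--\eqref{eq: orthstri} together with the almost-orthogonality of masses \eqref{eq: orthmass} give
\begin{equation*}
\limsup_n \Big\| \sum_{j\le J}\Psi_{j,n}\Big\|_{\xX_2[0,1]}^5 \lesssim \sum_{j\le J}\|\Psi_{j,n}\|_{\xX_2(\RR\times\RR)}^5 \lesssim \sum_{j\le J}\|\phi_j\|_2^{C} < \infty,
\end{equation*}
uniformly in $J$ by \eqref{eq: orthmass}; combined with \eqref{eq:smallnessofmass} for the tail $\omega_n^J$, this yields a bound $\|\tilde u_n^J\|_{\xX[0,1]}\le M_0$ with $M_0$ independent of $n,J$. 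This is exactly the hypothesis $\|v\|_{\xX_2}\le M_2$ needed to invoke Proposition~\ref{pr:sta_eps}.

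Next I would estimate the error $e_n^J := i\d_t \tilde u_n^J + \Delta\tilde u_n^J - \nN^{\eps_n}(\tilde u_n^J)$. Since $e^{it\Delta}\omega_n^J$ is a free solution and each $\Psi_{j,n}$ solves its own equation (with parameter $c_j^*$ for some $c_j^*\in(0,1]$ depending on $\lim \lambda_{j,n}^{\eps_n}$, or with zero nonlinearity), we get
\begin{equation*}
e_n^J = \sum_{j\le J} c_j^* \nN^{\eps_n}(\Psi_{j,n}) - \nN^{\eps_n}\Big(\sum_{j\le J}\Psi_{j,n} + e^{it\Delta}\omega_n^J\Big),
\end{equation*}
and the task is to show $\limsup_n \|e_n^J\|_{L_t^1L_x^2[0,1]}\to 0$ as $J\to\infty$ (or can be made $\le\delta$). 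This splits into three standard pieces: (i) cross terms $\nN^{\eps_n}(\sum\Psi_{j,n})-\sum\nN^{\eps_n}(\Psi_{j,n})$ vanish by the orthogonality \eqref{eq: orthstri}, exactly as in the mass-critical case (Remark~\ref{rem: orthogonality}); (ii) the interaction with $e^{it\Delta}\omega_n^J$ is controlled using the smallness \eqref{eq:smallnessofmass} in $L_t^5L_x^{10}$ and the a priori bound on $\sum\Psi_{j,n}$; (iii) the genuinely new piece, the discrepancy between $c_j^*\nN^{\eps_n}(\Psi_{j,n})$ and $\nN^{\eps_n}$ of the correctly-scaled profile — here one must carefully track how the $\eps_n$-nonlinearity transforms under the scaling $\lambda_{j,n}$, which is precisely why the definition of $\Psi_{j,n}$ bifurcates according to whether $\lambda_{j,n}^{\eps_n}\to 0$, $\to c_j^2>0$, or $\lambda_{j,n}\to\infty$. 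In each regime the point is that $|u|^{4-\eps_n}u$, after rescaling by $\lambda_{j,n}$, picks up a factor $\lambda_{j,n}^{\eps_n\cdot(\text{something})}$ that converges to the declared parameter $c_j^*$; the remainder, involving $|u|^{4-\eps_n}-c_j^*|u|^4$ against a profile with controlled Strichartz norm, tends to $0$ in $L_t^1L_x^2$ by dominated convergence (using that $\eps_n\to 0$).

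Once $\|e_n^J\|_{L_t^1L_x^2}$ is small (for $J$ large and $n$ large) and $\tilde u_n^J$ has the a priori $\xX$ bound $M_0$, Proposition~\ref{pr:sta_eps} (with $w=u_n$, $v=\tilde u_n^J$, noting $u_n(0)=f_n=\tilde u_n^J(0)$ up to the profile identity) gives $\limsup_n\|u_n-\tilde u_n^J\|_{\xX[0,1]}\le\kappa$, which is \eqref{eq: ultrastable}. Finally \eqref{eq: finalnormcontrol} follows by letting $\kappa\to 0$, using the triangle inequality, \eqref{eq:smallnessofmass}, the almost-orthogonal Pythagoras relation for $\|\sum_{j\le J}\Psi_{j,n}\|_{\xX_2}^5$, and \eqref{eq: orthmass} to sum over all $j$. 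I expect the main obstacle to be step (iii): making rigorous, uniformly in the profile parameters, the claim that the rescaled $\eps_n$-nonlinearity converges to the mass-critical nonlinearity with the precisely-identified constant $c_j^*$ — in particular handling the borderline scaling regimes and ensuring the error estimate is uniform in $n$ before passing to the limit. A secondary subtlety is that, unlike in the scaling-invariant case, one cannot simply rescale the equation for $u_n$ itself, so the entire comparison must be done at unit scale with the profiles carrying the scaling, which is why a robust stability statement like Proposition~\ref{pr:sta_eps} rather than a scaling reduction is essential.
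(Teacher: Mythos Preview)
Your overall strategy --- build the approximate solution, estimate its equation error, and close via the uniform-in-$\eps$ stability Proposition~\ref{pr:sta_eps} --- is the paper's, and your bookkeeping for the a~priori $\xX$-bound on the approximant and for \eqref{eq: finalnormcontrol} is correct. There is, however, a genuine organisational difference and a gap in your step~(iii).

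The paper does \emph{not} compare $u_n$ directly with $\sum_{j\le J}\Psi_{j,n}+e^{it\Delta}\omega_n^J$. Instead it introduces, for each $j$, the exact $\eps_n$-solution $v_{j,n}$ with data $\phi_{j,n}$, and takes $w_{J,n}:=\sum_{j\le J}v_{j,n}+e^{it\Delta}\omega_n^J$ as the approximate solution. Because each $v_{j,n}$ solves $i\d_t+\Delta=\nN^{\eps_n}(\cdot)$ exactly, the equation error for $w_{J,n}$ contains \emph{only} your pieces~(i) and~(ii); your piece~(iii) disappears from this step. The discrepancy between the $\eps_n$-flow and the limiting profile is isolated in a separate single-profile statement, Lemma~\ref{lem: oneprofilekey}: $\|v_{j,n}-\Psi_{j,n}\|_{\xX[0,1]}\to 0$ for each fixed $j$. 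The orthogonality needed in~(i) is then transferred from the $\Psi_{j,n}$ to the $v_{j,n}$ via this lemma. The payoff is that the hard part (your~(iii)) becomes a one-profile problem, where one may freely rescale, cut the time axis, and smooth the data before estimating.

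Your route can be made to work, but the claim that (iii) ``tends to $0$ in $L_t^1L_x^2$ by dominated convergence'' is where it breaks as written. First a correction: each $\Psi_{j,n}$ solves $i\d_t+\Delta=c_j^*\,\nN(\cdot)$ with the \emph{critical} nonlinearity, not $c_j^*\nN^{\eps_n}$, so (iii) is $c_j^*\nN(\Psi_{j,n})-\nN^{\eps_n}(\Psi_{j,n})$. After rescaling to the $\Psi_j$ variables, the time interval $[0,1]$ becomes one of length $\lambda_{j,n}^{-2}\to\infty$ when $\lambda_{j,n}\to 0$, and on that unbounded interval there is no $L_t^1L_x^2$ majorant for $|\Psi_j|^{5-\eps_n}$ (for instance $|\Psi_j|^4\notin L_t^1L_x^2(\RR)$, since $(4,8)$ is not admissible). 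What is actually needed --- and what the paper does in proving Lemma~\ref{lem: oneprofilekey} --- is a two-region argument: on a fixed window $[-T,T]$ one has dominated convergence (the paper additionally replaces $\Psi_j$ by the solution with smoothed data $P_{<K}\phi_j$ so that crude $H^{100}$ bounds make $|\,|\Psi_j|^{4-\eps_n}-|\Psi_j|^4|\lesssim\eps_n$ elementary); on the complementary region one uses the scattering smallness $\|\Psi_j\|_{\xX_2(|t|>T)}\ll 1$ together with the scaling identity $\lambda^{\eps_n/2}\cdot(\lambda^{-2})^{\eps_n/4}=1$ to make both the $\nN$ and the $\nN^{\eps_n}$ contributions small. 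You correctly flagged (iii) as the main obstacle, but the single phrase ``dominated convergence'' hides exactly the mechanism --- scattering tail plus profile smoothing on a bounded window --- that carries the argument.
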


\begin{rem}
One could always picking subsequence so that $\lim_{n}\lambda_{j,n}^{\eps_{n}}$ exists (or equal to $\infty$) for every $j$.
\end{rem}

\begin{rem}
In \eqref{eq: finalnormcontrol},   note that for each $j$, $\|\Phi_{j,n}\|_{L_{t}^{5}L_{x}^{10}(\RRR\times \RRR)}$ is independent of $n$.
\end{rem}
\begin{rem}
If one put $\eps_{n}\equiv 0$, and $\Psi_{j,n}=\frac{1}{\lambda_{j,n}^{1/2}}e^{ix {\xi_{j,n}}}e^{-it|\xi_{j,n}|^{2}}\Phi_{j}(\frac{t-t_{j,n}}{\lambda_{j,n}^{2}},\frac{x-x_{j,n}-2\xi_{j,n}t}{\lambda_{j,n}})$, then such a result  is known in the literature, after some natural modification.
We also remark, the way we present Proposition \ref{propositon: techicnalmain} is not totally perturbative, since we will use Theorem \ref{th:dodson} in the proof. 
\end{rem}

\subsection{Proposition~\ref{propositon: techicnalmain} implies Proposition~\ref{pr:Dodson_eps}} \label{secpropmain}

We prove by contradiction. Suppose Proposition~\ref{pr:Dodson_eps} is not true, then there exists $M>0$ and a sequence $\{\eps_n\} \in (0,1)$ with
\begin{equation}
i \d_{t} u_{n}+\Delta u_{n} = N^{\eps_{n}}(u_{n})\;, \quad u_n(a) = w(a)
\end{equation}
such that
\begin{equation}
\begin{cases}
\|u_{n}\|_{\xX_2(\iI)} \geq n,\\
\|u_{n}(0)\|_{L_{x}^{2}}\leq M+\frac{1}{n} \leq 2(M+1).
\end{cases}
\end{equation}
Up to a subsequence, one may assume $\eps_{n}\rightarrow 0$. Then this contradicts \eqref{eq: finalnormcontrol} in Proposition~\ref{propositon: techicnalmain}.

\section{Proof of Proposition~\ref{propositon: techicnalmain}} \label{sectechmain}

We put $\iI=[0,1]$. Propostiton \ref{propositon: techicnalmain} is  the consequence of the key orthogonality condition \eqref{eq: orthpara} in profile decomposition, the uniform stability Prop \ref{pr:sta_eps}, and the following key Lemma.
\begin{lem}\label{lem: oneprofilekey}
Let $u_{n}, f_{n},\phi_{j}, \phi_{j,n} ,\Psi_{j,n}, \eps_{n}$ be as in Proposition \ref{propositon: techicnalmain}.  Let $v_{j,n}$ solves
\begin{equation}\label{eq: vjn}
\begin{cases}
i\partial_{t}v_{j,n}+\Delta v_{j,n}=N^{\eps_{n}}(v_{j,n}),\\
v_{j,n}(0)=\phi_{j,n}.
\end{cases}
\end{equation}
Then we have for each $j$,
\begin{equation}\label{eq: asyappoxi}
\lim_{n\rightarrow \infty}\|\Psi_{j,n}-v_{j,n}\|_{\xX(\iI)}=0.
\end{equation}
\end{lem}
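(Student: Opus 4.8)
The plan is to treat each case in the definition of $\Psi_{j,n}$ separately, reducing everything to a statement about a single fixed profile by undoing the symmetry transformations. For a fixed $j$, write $\lambda_n = \lambda_{j,n}$, $\xi_n = \xi_{j,n}$, $x_n = x_{j,n}$, $t_n = t_{j,n}$, $\eps_n$ as given, and $\phi = \phi_j$. The key point is that the symmetry group element $g_{j,n}$ acts isometrically on $L^2_x$, and almost isometrically on the Strichartz norms $\xX(\iI)$ after the appropriate change of variables; the only obstruction to exact invariance is that the exponent $4-\eps_n$ in $\nN^{\eps_n}$ is not scale-invariant, so conjugating equation \eqref{eq: vjn} by $g_{j,n}$ produces an equation of the same type but with the nonlinearity multiplied by a factor $\lambda_n^{\eps_n}$ (this is exactly the computation behind Remarks \ref{symmetryrmk}, \ref{rem: symmetrykeepsnorm}, with time rescaled from $[0,1]$ to the shrinking/expanding interval $[-t_n/\lambda_n^2, (1-t_n)/\lambda_n^2]$, which in the limit becomes a half-line or all of $\RR$ depending on whether the profile is compact, backward or forward scattering).

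First, I would establish the \emph{compact, $\lambda_n \equiv 1$} case (item (2)): here $g_{j,n}$ is a pure Galilean boost, space translation and (possibly convergent) time translation, the rescaling factor $\lambda_n^{\eps_n} = 1$, and the transformed equation is exactly $i\d_t \tilde v + \Delta \tilde v = \nN^{\eps_n}(\tilde v)$ on a fixed interval, with initial data converging in $L^2_x$ to $\phi$ (after absorbing the time shift). The nonlinear profile $\Phi_j$ solves the $\eps = 0$ equation $i\d_t\Phi_j + \Delta\Phi_j = |\Phi_j|^4\Phi_j$ with the same data; so \eqref{eq: asyappoxi} follows from the uniform-in-$\eps$ stability Proposition \ref{pr:sta_eps} (with $v = \tilde v$, $w$ the solution of the $\eps_n$-equation with exactly data $\phi$, then compared to $\Phi_j$ by letting $\eps_n \to 0$ — here I would also invoke Proposition \ref{pr:Dodson_eps}/Theorem \ref{th:dodson} boundedness to get the needed a priori $\xX_2$ control so that the perturbative Proposition \ref{pr:sta_eps} applies, or more cleanly invoke the already-available Proposition \ref{pr:strong_sta_eps}). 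For the \emph{$\lambda_n \to 0$, $\eps_n$ asymptotically small} sub-case (item (3), second bullet), the same argument works but the transformed equation lives on $[-t_n/\lambda_n^2, \cdot]$ which tends to $\RR$ (or a half-line) and the nonlinearity carries the factor $\lambda_n^{\eps_n} \to c_j^2$; so the target is the nonlinear profile with parameter $c_j$, and I would again use Proposition \ref{pr:sta_eps}/\ref{pr:strong_sta_eps} together with the $c$-independence of the bounds in Theorem \ref{th:dodson}, plus a standard local theory truncation near $t = \pm\infty$ for the scattering profiles (the tails are small in $\xX$ because the free evolution of $\phi$ has small Strichartz norm there).

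For the \emph{dispersive} cases — $\lambda_n \to \infty$ (item (1)), and $\lambda_n \to 0$ with $\eps_n$ asymptotically large so $\lambda_n^{\eps_n} \to 0$ (item (3), first bullet) — the target $\Psi_{j,n}$ is purely the \emph{linear} evolution $e^{it\Delta}\phi_{j,n}$. Here the strategy is different: I would show directly that the Duhamel term in \eqref{eq: vjn} has vanishing $\xX$ norm as $n \to \infty$. After conjugating by $g_{j,n}$, this amounts to showing that $\int \sS(\cdot - s)\, \lambda_n^{\eps_n}\,\nN^{\eps_n}(\tilde v_{j,n})(s)\,ds$ tends to $0$ in Strichartz norm. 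When $\lambda_n^{\eps_n} \to 0$ the prefactor does the job directly (using the uniform-in-$\eps$ local bound, Lemma \ref{lem: enhancedsmallness}, which gives $\|\tilde v_{j,n}\|_{\xX}$ bounded on each unit time interval in terms of the free data, summed over $O(\lambda_n^{-2})$ such intervals — one must check $\lambda_n^{\eps_n}\cdot\lambda_n^{-2}$ type losses are beaten, which they are once $\lambda_n^{\eps_n} \to 0$, or by instead running the argument on the full line where the free Strichartz norm of $\phi$ is globally finite). When $\lambda_n \to \infty$ (so the time interval $[-t_n/\lambda_n^2,(1-t_n)/\lambda_n^2]$ shrinks to a point), $\|e^{it\Delta}\phi\|_{\xX_2}$ over that tiny interval tends to $0$, hence by Lemma \ref{lem: enhancedsmallness} so does $\|\tilde v_{j,n}\|_{\xX_2}$, and then Lemma \ref{lem: prelwp} shows the Duhamel correction is even smaller; transferring back via the (near-)isometry of $g_{j,n}$ gives \eqref{eq: asyappoxi}.

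The main obstacle I anticipate is bookkeeping the interaction between the broken scaling symmetry and the scattering profiles in the $\lambda_n \to 0$ regime: one is comparing a solution on the shrinking interval $[0,1]$ (in original variables) with a global-in-time object, and the rescaled time interval $[-t_n/\lambda_n^2, (1-t_n)/\lambda_n^2]$ may be a growing half-line, so one must justify uniformly (in $n$, and in $\eps_n \to 0$) that the nonlinear profile $\Phi_j$ — which a priori is only known to exist and scatter for the $\eps = 0$ equation via Dodson, and for each fixed $\eps > 0$ by subcritical local theory — is well-approximated on these long intervals by the $\eps_n$-solutions. This is precisely where the $c$-uniformity in Theorem \ref{th:dodson} and the $\eps$-uniformity in Propositions \ref{pr:sta_eps} and \ref{pr:Dodson_eps} are essential, and assembling them into a single stability estimate valid on all of $\RR$ (not just $\iI$) — via a continuity/bootstrap argument partitioning $\RR$ into finitely many intervals on which the perturbative Proposition \ref{pr:sta_eps} applies, the number of intervals controlled by the global Dodson bound on $\Phi_j$ — is the technical heart of the proof.
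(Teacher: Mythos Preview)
Your case-by-case framework matches the paper's, and your treatment of the $\lambda_n \to \infty$ and $\lambda_n \equiv 1$ cases is essentially on target. There are, however, two concrete gaps.

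First, in the case $\lambda_n \to 0$, $\lambda_n^{\eps_n} \to 0$: your primary argument (rescale, pick up the prefactor $\lambda_n^{\eps_n/2}$ on the nonlinearity, and sum over $O(\lambda_n^{-2})$ unit intervals) does not work. The claim that ``$\lambda_n^{\eps_n}\cdot\lambda_n^{-2}$ type losses are beaten once $\lambda_n^{\eps_n} \to 0$'' is false --- since $\eps_n\to 0$ and $\lambda_n\to 0$ one always has $\lambda_n^{\eps_n-2}\to\infty$. More to the point, after rescaling, the subcritical factor $(b-a)^{\eps_n/4}$ from Lemma~\ref{lem: prelwp} on the full rescaled interval of length $\lambda_n^{-2}$ \emph{exactly} cancels the prefactor $\lambda_n^{\eps_n/2}$, so the prefactor alone cannot kill the Duhamel term globally. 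The mechanism the paper uses (in original variables) is a three-window decomposition: outside $[t_n-\lambda_n^2 T,\,t_n+\lambda_n^2 T]$ the \emph{free} Strichartz norm of $\phi_{j,n}$ is $\lesssim\kappa$ by choice of $T$, so a direct bootstrap (Lemma~\ref{lem: enhancedsmallness}) gives $v_{j,n}\approx$ free there with no need for any prefactor; inside the concentration window the interval has length $2\lambda_n^2 T$, so the factor $(b-a)^{\eps_n/4}\sim\lambda_n^{\eps_n/2}\to 0$ kills the nonlinearity even though the Strichartz norm there is $O(1)$. Your fallback (``run on the full line where the free Strichartz norm of $\phi$ is globally finite'') gestures toward this but does not isolate the point that the prefactor is effective only on the bounded middle window.

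Second, in the comparisons to the nonlinear profile $\Phi_j$ (the $\lambda_n\equiv 1$ case and the $\lambda_n\to 0$, $\lambda_n^{\eps_n}\to c_j^2>0$ case), you invoke Proposition~\ref{pr:strong_sta_eps} to pass from the $\eps_n$-equation to the $\eps=0$ (resp.\ $c_j$-) equation. That proposition only compares two solutions of the \emph{same} $\eps_n$-equation; to use it you must regard $\Phi_j$ as an approximate $\eps_n$-solution with error $e_n=c_j\nN(\Phi_j)-\lambda_n^{\eps_n/2}\nN^{\eps_n}(\Phi_j)$ and verify $\|e_n\|_{L^1_tL^2_x}\to 0$. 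For $\Phi_j$ merely in $\xX$ this is not automatic. The paper's device --- which you are missing --- is smoothification: replace $\Phi_j$ by the solution $\tilde\Phi_j$ with frequency-truncated data $P_{<K}\phi_j$, use persistence of regularity to obtain a (bad, $K$-dependent) bound $\|\tilde\Phi_j\|_{L^\infty_tH^{100}_x}\lesssim_K 1$, and then the crude pointwise estimate $|\nN^{\eps_n}(\tilde\Phi_j)-\nN(\tilde\Phi_j)|\lesssim\eps_n(1+\|\tilde\Phi_j\|_{L^\infty_x})^5$ makes the error vanish as $n\to\infty$ for each fixed $K$. This extra approximation layer is what allows the stability propositions to close, and it is also what makes the paper's truncate-at-time-$T$ argument (Lemma~\ref{lem: auxkey}) cleaner than your proposed partition of $\RR$ into Dodson-controlled subintervals.
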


\subsection{Proof of Proposition \ref{propositon: techicnalmain} assuming Lemma~\ref{lem: oneprofilekey}}
Before we start, let $u_{n}, f_{n},\phi_{j}, \phi_{j,n} ,\Psi_{j,n}, \eps_{n}$ be as in Proposition \ref{propositon: techicnalmain}.
Also, let $v_{j,n}$ be defined as in \eqref{eq: vjn}.
We assume that  $\|f_{n}\|_{L_{x}^{2}}\leq M_{0}$. Note that \eqref{eq: orthmass} implies for all $j$, $\|\phi_{j}\|_{L_{x}^{2}}\leq M_{0}$.
which gives
\begin{equation}\label{eq: initialmass}
\|v_{j,n}(0)\|_{L_{x}^{2}(\RRR)}=\|\Psi_{j}\|_{L_{x}^{2}(\RRR)}=\|\phi_{j}\|_{L_{x}^{2}}
\end{equation}
In the spirit of Proposition \ref{pr:sta_eps}, we want to use , for$J$ large but fixed (later)
\begin{equation}
\sum_{j\leq J}\Psi_{j,n}+e^{it\Delta}\omega_{n}^{J}
\end{equation}
as an approximate solution to $u_{n}$. Such strategy is typical in the application of concentration compactness.

At the technical level, it is more convenient to use
\begin{equation}
w_{J,n}:=\sum_{j\leq J}v_{j,n}+e^{it\Delta}\omega_{n}^{J}
\end{equation}
as an approximate solution to $u_{n}$. The goal is to prove
\begin{equation}\label{eq: goal}
\lim_{J\rightarrow \infty}\lim_{n\rightarrow \infty} \|w_{J,n}-u_{n}\|_{\xX(\iI)}=0
\end{equation}
Note that \eqref{eq: ultrastable} follows from \eqref{eq: goal} since we have \eqref{eq: asyappoxi}. We emphasize here in \eqref{eq: goal} one first pushes $n$ to infinity, then pushes $J$.

Lemma \ref{lem: oneprofilekey} roughly says $v_{j,n}$ and $\Psi_{j,n}$ are asymptotically equivalent. Before we proceed, we give a few remarks here. 
\begin{itemize}
\item $\Psi_{j,n}$ is easier to study analytically, since it has the structure
\begin{equation}\label{eq: structure}
\Psi_{j,n}= \frac{1}{\lambda_{j,n}^{1/2}}e^{ix {\xi_{j,n}}}e^{-it|\xi_{j,n}|^{2}}\Psi_{j}(\frac{t-t_{j,n}}{\lambda_{j,n}^{2}},\frac{x-x_{j,n}-2\xi_{j,n}t}{\lambda_{j,n}})
\end{equation}
\item $v_{j,n}$ are more suitable to do approximation. Because,  for each $j$, $\Psi_{j,n}$ either solves linear Schr\"odinger and mass critical nonlinear Schr\"odinger, and $v_{j,n}$ solves
\begin{equation}
i\partial_{t}v_{j,n}+\Delta v_{j,n}=N^{\eps_{n}}v_{j,n}
\end{equation}
\end{itemize}
We also point out that $v_{j,n}$ is only asymptotically  close to $\Psi_{j,n}$ within time interval $\iI$.

We write down the equation of $w_{J,n}$.
\begin{equation}
\begin{cases}
i\partial_{t}w_{J,n}+\Delta w_{J,n}=N^{\eps_{n}}(w_{J,n})+e_{J,n},\\
w_{J,n}(0)=u_{n}(0)=f_{n}.
\end{cases}
\end{equation}
and $e_{J,n}$ is of form
\begin{equation}\label{eq: errorformula}
\begin{aligned}
-e_{J,n}=
&|\sum_{j\leq J} v_{j,n}+e^{it\Delta}\omega_{J,n}|^{4-\eps_{n}}(\sum_{j\leq J} v_{j,n}+e^{it\Delta}\omega_{J,n})-\sum_{j\leq J}|v_{j,n}|^{4-\eps_{n}}v_{j,n}
\\
=&\left(|\sum_{j\leq J} v_{j,n}+e^{it\Delta}\omega_{J,n}|^{4-\eps}(\sum_{j\leq J} v_{j,n}+e^{it\Delta}\omega_{J,n})-|\sum_{j\leq J} v_{j,n}|^{4-\eps_{n}}(\sum_{j\leq J} v_{j,n})\right)\\
+&\left(|\sum_{j\leq J} v_{J,n}|^{4-\eps_{n}}(\sum_{j\leq J} v_{j,n})-\sum_{j\leq J}|v_{j,n}|^{4-\eps_{n}}v_{J,n}\right)
\\
=&(I)+(II)
\end{aligned}
\end{equation}
To apply Proposition \ref{pr:sta_eps}, we need a good estimate for $\|w_{J,n}\|_{\xX_2(\iI)}$ and $\|e_{J,n}\|_{L_{t}^{1}L_{x}^{2}(\iI)}$. To achieve this ,we observe
\begin{itemize}
\item There exists $C_{M_{0},1}$ For each $j$, 
\begin{equation}\label{eq: submass}
\lim_{n\rightarrow \infty}\|v_{j,n}\|_{\xX_2(\iI)}\leq C_{M_{0},1}
\end{equation}
\item There exists some $\delta_{0}$, so that if $\|v_{j,n}(0)\|_{L_{x}^{2}}\leq \delta_{0}$, then
\begin{equation}\label{eq: estiamte2}
\|v_{j,n}\|_{\xX_2(\iI)} \lesssim \|v_{j,n}(0)\|_{L_{x}^{2}}.
\end{equation}
\item For all $j\neq j'$,
\begin{equation}\label{eq: estimate3}
\lim_{n\rightarrow \infty}\|v_{j,n} v_{j',n}\|_{L_{t}^{5/2}L_{x}^{5}(\iI)}=0
\end{equation}
\item There exists some $C_{M_{0},2}$, so that for all $J$ large and fixed, one has 
\begin{equation}\label{eq: rightguess}
\limsup_{n\rightarrow \infty} \big\|\sum_{j\leq J}v_{j,n} \big\|_{\xX_2(\iI)} \leq C_{M_{0},2}
\end{equation}
\item There exists some $C_{M_{0},0}$, so that for all $J$ large and fixed, one has
\begin{equation}\label{eq: rightguess2}
\limsup_{n}\|w_{J,n}\|_{\xX_2(\iI)}\leq C_{M_{0},0}.
\end{equation}
\end{itemize}

We now prove the above estimates one by one. The key tool is Lemma \ref{lem: oneprofilekey}.

First, by \eqref{eq: initialmass}, recalling that $\Psi_{j}$ either solves linear Schroindger equation or mass critical nonlinear Schrodinger equation, Theorem \ref{th:dodson} and Strichartz esitmates implies
\begin{equation}
\|\Psi_{j}\|_{\xX_2(\RR)} \leq C_{M_{0},1}, \text{for some} \phantom{1} C_{M_{0},1}, 
\end{equation}
which implies
\begin{equation}
\|\Psi_{j,n}\|_{\xX_2(\RR)}=\|\Psi_{j}\|_{\xX_2(\RR)}\leq C_{M_{0},1}
\end{equation}
for all $n$. Thus, we derive \eqref{eq: submass} by applying \eqref{eq: asyappoxi}. 

For the second one, \eqref{eq: estiamte2} follows directly from Lemma \ref{lem: enhancedsmallness}.

For the third one, by \eqref{eq: asyappoxi}, estimate \eqref{eq: estimate3} follows from the fact
\begin{equation}
\lim_{n\rightarrow \infty }\|	\Psi_{j,n} \Psi_{j',n}\|_{L_{t}^{5/2}L_{x}^{5}(\iI)}\leq \lim_{n\rightarrow \infty}\|\Psi_{j,n} \Psi_{j',n}\|_{L_{t}^{5/2}L_{x}^{5}(\RR)}=0,
\end{equation}
which is a consequence of \eqref{eq: structure} and \eqref{eq: orthpara} (see also Remark \ref{rem: orthogonality}). 

For the fourth one, let us fix the $\delta_{0}$ small enough according to Lemma~\ref{lem: enhancedsmallness}. Note that due to \eqref{eq: orthmass}, there can only be finite many $j$s so that $\|\phi_{j}\|_{L_{x}^{2}}\geq \delta_{0}$. We assume that 
\begin{equation}\label{eq: remainmass}
\|\phi_{j}\|_{L_{x}^{2}}\leq \delta_{0}, j\geq J_{0}+1
\end{equation}
Now, for any $J\geq J_{0}+1$, combining \eqref{eq: submass} and \eqref{eq: remainmass}, we have 
\begin{equation}\label{eq: guess2}
\begin{aligned}
&\lim_{n\rightarrow \infty}\sum_{j=1}^{J}\|v_{j,n}\|_{\xX_2(\iI)}^{5}\\
&\lesssim  \sum_{j=1}^{J_{0}}C_{M_{0},1}^{5}+\sum_{j\geq J_{0}+1} \|\phi_{j}\|_{L_{x}^{2}}^{5}
&\lesssim   \sum_{j=1}^{J_{0}}C_{M_{0},1}^{5}+M_{0}^{2}. 
\end{aligned}
\end{equation}
Note that the right side of \eqref{eq: guess2} does not depend on $J$. Thus, \eqref{eq: rightguess} follows from \eqref{eq: guess2} and \eqref{eq: estimate3}.

Finally, estimate \eqref{eq: rightguess2} follows from the triangle inequality, \eqref{eq:smallnessofmass} and \eqref{eq: rightguess}.

With \eqref{eq: rightguess2}, in order to apply Proposition~\ref{pr:sta_eps} to prove \eqref{eq: goal}, we only need to show
\begin{equation}\label{eq: controloferrorterm}
\lim_{J}\lim_{n}\|e_{J,n}\|_{L_{t}^{1}L_{x}^{2}(\iI)}=0.
\end{equation}
Recall the formula of $e_{J,n}$, \eqref{eq: errorformula},
Fix any $J$, term $\|(II)\|_{L_{t}^{1}L_{x}^{2}(\iI)}$ goes to zero as $n\rightarrow \infty$ by \eqref{eq: estimate3} and \eqref{eq: submass}.
On the other hand, due to \eqref{eq: rightguess} and \eqref{eq:smallnessofmass}, we have
\begin{equation}
\limsup_{J}\limsup_{n}\|(I)\|_{L_{t}^{1}L_{x}^{2}(\iI)} \rightarrow 0.
\end{equation}
Thus, \eqref{eq: controloferrorterm} follows. We are left with the proof of \eqref{eq: finalnormcontrol}, but the proof is essentially same as the proof \eqref{eq: rightguess2}; see also \eqref{eq: guess2}.

\subsection{Proof of Lemma~\ref{lem: oneprofilekey}}

Fix $j=1$. Since we are now working one profile, we may assume $x_{1,n}\equiv\xi_{1,n}\equiv 0.$

The reason we could, without loss of generality, assume  $x_{j,n}, \xi_{j,n}\equiv 0$ is because those two symmetry work on exactly same way for mass critical NLS, mass subscrtical NLS and linear Schr\"odinger, and leave our considered norm invariant. The following remark makes it precise. 

\begin{rem}
If $v$ solves
\begin{equation}
i\partial_{t}v+\Delta v=\mu |v|^{4-\eps}v, 
\end{equation}
for some $\eps\in [0,1]$, $\mu=0$ or $1$.
Then the solution to equation
\begin{equation}
i\partial_{t}w+\Delta w=\mu|w|^{4-\eps}w,
w(0)=e^{i\xi_{0}}v(0,x-x_{0})
\end{equation}
is of form $w(t,x)=e^{i\theta(t)}e^{i\xi(t)x}v(t,x-x(t))$ and the formula of $\theta(t), x(t),\xi(t)$ only depends on $x_{0},\xi_{0}$. 
\end{rem}

Note that $\Psi_{1,n}$ is defined via different way, depending on the asymptotic behavior of $\lambda_{1,n}$ and $\eps_{1,n}$. We will do a case by case study of Lemma \ref{lem: oneprofilekey}

\subsubsection{Case 1: $\lim_{n\rightarrow \infty}\lambda_{1,n}=\infty$}
In this case, we want to prove the dynamic is essentially linear.  Recall $\Phi_{1,L}=e^{it\Delta}\phi$
Recall $v_{1,n}$ solves in time interval $I$
\begin{equation}
i\partial_{t}v_{1,n}+\Delta v_{1,n}=N^{\eps_{n}}(v_{1,n}), \quad
v_{1,n}(0,x)=\frac{1}{\lambda_{1,n}^{1/2}}\Phi_{1,L}(-\frac{t_{1,n}}{\lambda_{1,n}},\frac{1}{\lambda_{1,n}}).
\end{equation}
The desired results follows from Lemma \ref{lem: prelwp} if we can show
\begin{equation}\label{eq: vanishoflinearscatter}
\|e^{it\Delta}[v_{1,n}(0)]\|_{\xX_2(\iI)}\xrightarrow{n\rightarrow \infty} 0.
\end{equation}
First note that via Strichartz estimate, we have
\begin{equation}
\|\Phi_{1,L}\|_{\xX_2(\RR)} < +\infty,
\end{equation}
Then we note that
\begin{equation}\label{eq: rescaling}
\|e^{it\Delta}[v_{1,n}(0)]\|_{\xX_2(\iI)}=\|\Phi_{1,L}\|_{\xX_2\big(\big[-\frac{t_{1,n}}{\lambda_{1,n}^{2}},-\frac{t_{1,n}-1}{\lambda_{1,n}^{2}}\big)\big)}. 
\end{equation}
If $\phi_{1,n}$is a forward scattering profile, i.e. $\lim_{n\rightarrow\infty}-\frac{t_{1,n}}{\lambda_{1,n}}=\infty$, \eqref{eq: vanishoflinearscatter} always holds and we don't even need any information about $\lambda_{1,n}$. In particular, we will not discuss forward scattering profiles in the later two cases.
 
If $\phi_{1,n}$ is a compact profile, i.e. $t_{1,n}\equiv 0$, we have
\begin{equation}
\|e^{it\Delta}v_{1,n}\|_{\xX_2(\iI)}=\|\Phi_{1,L}\|_{\xX_2\big(\big[0,\frac{1}{\lambda_{1,n}^{2}}\big)\big)} \rightarrow 0
\end{equation}
since $\frac{1}{\lambda_{1,n}^{2}}\rightarrow 0$ via $\lim_{n\rightarrow \infty}\lambda_{1,n}=\infty$. 

Finally, for a backward scattering profile, we still have $\lim_{n\rightarrow \infty}\frac{t_{1,n}-1}{\lambda_{1,n}^{2}}=-\infty$ since $\frac{1}{\lambda_{1,n}^{2}}\rightarrow 0$. This implies \eqref{eq: vanishoflinearscatter}.

We also take this chance to note that, whenever one has $\lim_{n\rightarrow \infty} \big(- \frac{t_{1,n}-1}{\lambda_{1,n}^{2}} \big)=-\infty$ for a backward scattering profile, the dynamic is linear since we have \eqref{eq: vanishoflinearscatter}. Thus, in the later two cases , for a backward scattering profile, we need only to consider the sub-situation
\begin{equation}
-\frac{t_{1,n}-1}{\lambda_{1,n}^{2}}>-\infty.
\end{equation}
Since $\lim_{n\rightarrow \infty}-\frac{t_{1,n}}{\lambda_{1,n}^{2}}=-\infty$, this would imply
\begin{equation}\label{eq: caseofinterest}
\lim_{n\rightarrow \infty}\lambda_{1,n}=0 \text{  and } \liminf_{n\rightarrow \infty}{t_{1,n}}\leq 1.
\end{equation}

\subsubsection{Case 2: $\lambda_{1,n}\equiv 1$}

As explained in the  previous case, we only need to handle compact profile in this case, i.e. $t_{1,n} \equiv 0$. The desired result follows from the lemma below. 

\begin{lem}\label{lem: epconvengence}
Let $\iI=[0,1]$, $v_{0}\in L_{x}^{2}$, and $v_\eps$ solves the equation
\begin{equation}
\begin{cases}
i\partial_{t}v_{\eps}+\Delta v_{\eps}=N^{\eps }(v_{\eps}),\\
v_{\eps}(0)=v_{0}
\end{cases}
\end{equation}
in $\iI$. Let $v$ solve
\begin{equation}
\begin{cases}
iv_{t}+\Delta v=|v|^{4}v,\\
 v(0)=v_{0}\in L_{x}^{2}. 
\end{cases}
\end{equation}
Then, 
\begin{equation}
\|v_\eps-v\|_{\xX(\iI)} \rightarrow 0
\end{equation}
as $\eps \rightarrow 0$. 
\end{lem}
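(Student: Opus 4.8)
The plan is to view the mass-critical solution $v$ as an \emph{approximate} solution to the $\eps$-equation solved by $v_\eps$, and then invoke the uniform-in-$\eps$ stability of Proposition~\ref{pr:sta_eps}. The crucial point is that Dodson's Theorem~\ref{th:dodson} supplies a bound on $\|v\|_{\xX_2(\iI)}$ depending only on $\|v_0\|_{L_x^2}$ --- exactly the input that Proposition~\ref{pr:sta_eps} requires for the approximate solution --- whereas no a priori uniform $\xX_2$-bound on $v_\eps$ is yet available (producing one is the content of Proposition~\ref{pr:Dodson_eps}).

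Concretely, I would first rewrite the equation for $v$ as $i\d_t v + \Delta v = \nN^{\eps}(v) + e_\eps$ with
\begin{equation*}
e_\eps := \nN(v) - \nN^{\eps}(v) = |v|^{4} v - |v|^{4-\eps} v,
\end{equation*}
so that $v$ plays the role of the perturbed solution in \eqref{eq:nls_v_eps} with $c=1$ and error $e_\eps$, while $v_\eps$ plays the role of the exact solution in \eqref{eq:nls_w_eps} with the same $c$ and the same $\eps$. By conservation of mass, $\|v\|_{\xX_1(\iI)} = \|v_0\|_{L_x^2} =: M_1$, and by Theorem~\ref{th:dodson}, $\|v\|_{\xX_2(\iI)} \le \|v\|_{\xX_2(\RR)} \le M_2$ for some $M_2 = M_2(\|v_0\|_{L_x^2})$; both are independent of $\eps$, and the initial data agree, $v(0) = v_\eps(0) = v_0$. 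Hence, as soon as I show $\|e_\eps\|_{L_t^1 L_x^2(\iI)} \to 0$ as $\eps \to 0$, Proposition~\ref{pr:sta_eps} (applied with the constants $\delta_{M_1,M_2}, C_{M_1,M_2}$) gives, for all $\eps$ small enough, $\|v_\eps - v\|_{\xX(\iI)} \le C_{M_1,M_2}\,\|e_\eps\|_{L_t^1 L_x^2(\iI)} \to 0$, which is the assertion.

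It then remains to estimate $e_\eps$. Pointwise, writing $a = |v(t,x)|$, one has $|e_\eps(t,x)| = a^{5-\eps}\,\big|a^{\eps} - 1\big|$, which on $\{a \le 1\}$ is $\le a^{5-\eps} \le a^{4}$ and on $\{a \ge 1\}$ equals $a^{5} - a^{5-\eps} \le a^{5}$; thus $|e_\eps| \le |v|^{4} + |v|^{5}$ uniformly in $\eps \in (0,1)$, while $e_\eps(t,x) \to 0$ for every $(t,x)$ (it vanishes identically where $v=0$, and $a^{\eps} \to 1$ otherwise). The dominating function lies in $L_t^1 L_x^2(\iI)$: indeed $\||v|^{5}\|_{L_t^1 L_x^2(\iI)} = \|v\|_{\xX_2(\iI)}^{5} < \infty$, and $\||v|^{4}\|_{L_t^1 L_x^2(\iI)} = \|v\|_{L_t^4 L_x^8(\iI)}^{4} < \infty$, using $L_t^5(\iI) \hookrightarrow L_t^4(\iI)$ on the bounded interval $\iI$ together with the interpolation $\|v(t)\|_{L_x^8} \lesssim \|v(t)\|_{L_x^2}^{1/16}\|v(t)\|_{L_x^{10}}^{15/16}$ between $\xX_1$ and $\xX_2$. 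Applying dominated convergence in $x$ for a.e.\ fixed $t$ and then in $t$ yields $\|e_\eps\|_{L_t^1 L_x^2(\iI)} \to 0$, as needed.

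There is no genuine obstacle here; the one point that must not be gotten backwards is the direction of the perturbation argument: $v$, globally bounded thanks to Dodson, has to be the approximate solution, precisely because the $\xX_2$-control demanded by Proposition~\ref{pr:sta_eps} is unavailable for $v_\eps$. The remaining work is the elementary pointwise bound on $e_\eps$ and a single interpolation to make the dominating function integrable.
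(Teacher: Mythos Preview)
Your argument is correct and in fact slightly cleaner than the paper's. Both proofs put the mass-critical solution in the role of the approximate solution for the $\eps$-equation (so that Dodson's bound supplies the required $\xX_2$-control) and then appeal to Proposition~\ref{pr:sta_eps}; the only difference lies in how the error $e_\eps = |v|^{4}v - |v|^{4-\eps}v$ is shown to vanish in $L_t^1 L_x^2(\iI)$.

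The paper does not work with $v$ directly. Instead it introduces a high-regularity approximation $\tilde v$ (obtained by frequency-truncating the data), uses the crude $L_t^\infty H_x^{100}$ bound on $\tilde v$ together with $L^\infty_x$-embedding to estimate $\big\||\tilde v|^{4}\tilde v - |\tilde v|^{4-\eps}\tilde v\big\|_{L_t^\infty L_x^2} \lesssim \eps(1+\|\tilde v\|_{L_t^\infty H_x^{100}})^{5}$, and then runs a two-parameter $(\delta,\eps)$ argument. You bypass this entirely by observing the uniform pointwise domination $|e_\eps| \le |v|^{4} + |v|^{5}$ and invoking dominated convergence, with a short interpolation to check $|v|^{4}\in L_t^1 L_x^2$ on the bounded interval. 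Your route is more direct and avoids the auxiliary smoothing; the paper's route, on the other hand, yields a quantitative $O(\eps)$ bound for each fixed approximation level and is essentially the same smoothing manoeuvre that gets reused later in Case~3.2, so it may be viewed as a warm-up for that argument.
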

\begin{proof}
Let $\|v_{0}\|_{L_{x}^{2}}=M_{0}$
First note that for any $\delta>0$, one will be able find some smooth $\tilde{v}$ so that
\begin{equation}\label{eq: eqfortv1222}
\begin{cases}
i\tilde{v}+\Delta \tilde{v}=|\tilde{v}|^{4}\tilde{v},\\
\tilde{v}(0)=\tilde{v}_{0}
\end{cases}
\end{equation}
and 
\begin{equation}\label{eq: approximate11222}
\begin{cases}
\|\tilde{v}_{0}-v_{0}\|_{L_{x}^{2}}\leq \delta,\\
\|\tilde{v}-v\|_{L_{t}^{5}L_{x}^{10}\cap L_{t}^{\infty}L_{x}^{2}(\RRR\times \RRR)}\leq \delta,\\
\|\tilde{v}\|_{L_{t}^{\infty}H^{100}_{x}}\leq C_{\delta}. 
\end{cases}
\end{equation}
A typical way to construct such $\tilde{v}$ is to choose $N_{\delta}$ depending on $\delta$ large enough, and let $\tilde{v}(0)=P_{<N_{\delta}} v_{0}$ be the projection onto Fourier modes up to $N_{\delta}$. 

We remark that the last bound $\|\tilde{v}\|_{L_{t}^{\infty}H^{100}_{x}(\RRR\times \RRR)}\leq C_{\delta}$ depends on $\delta$, and should be understood as a crude bound, but we will see the error caused by this bad bound will vanishes asymptotically.

Note that by Theorem \ref{th:dodson}, there exists some $C_{M_{0}}$ independent of $\delta$ such that
\begin{equation}
\|\tilde{v}\|_{L_{t}^{5}L_{x}^{10}(\RRR\times\RRR)}\leq C_{M_{0}}.
\end{equation}
Now we want to use $\tilde{v}$ as an approximate solution to $v_{\eps}$, note that one has
\begin{equation}
\begin{aligned}
i\partial_{t}\tilde{v} +\Delta \tilde{v} = \nN^{\eps}(\tilde{v})+ (\nN(\tilde{v})- \nN^{\eps}(\tilde{v})),\\
\tilde{v}(0)=\tilde{v_{0}}. 
\end{aligned}
\end{equation}
Observe that 
\begin{equation}
\begin{aligned}
&\lim_{\eps\rightarrow 0}|\|\tilde{v}_{\eps}|^{4-\eps}\tilde{v}-|\tilde{v}|^{4}v\|_{L_{t}^{1}L_{x}^{2}(\iI)}\\
\lesssim &\lim_{\eps\rightarrow 0}|\|\tilde{v}_{\eps}|^{4-\eps}\tilde{v}-|\tilde{v}|^{4}v\|_{L_{t}^{\infty}L_{x}^{2}(\iI)}\\
\lesssim &\lim_{\eps\rightarrow 0} \eps \big(1+\|v\|_{{L_{t}^{\infty}H_{x}^{100}(\RR)}}\big)^{5}\\
=&0.
\end{aligned}
\end{equation}
Thus, by Proposition~\ref{pr:sta_eps}, we have
\begin{equation}\label{444}
\lim_{\eps}\|v_{\eps}-\tilde{v}\|_{\xX_2(\iI)} \lesssim \delta. 
\end{equation}
Since $\delta$ can be chosen abitrary small, \eqref{444} and \eqref{eq: approximate11222} imply the Lemma.
\end{proof}

\subsubsection{Case 3.1: $\lim_{n\rightarrow \infty}\lambda_{1,n}=0$,$\lim_{n}\lambda_{1,n}^{\eps_{n}}=0$, $\eps_{n}$ asymptotically large}

We need only consider backforward scattering profile and compact profile. We start with the backforward scattering profile.

As mentioned in \eqref{eq: caseofinterest}, we only consider the subcase
\begin{equation}
\lim_{n\rightarrow \infty}\lambda_{1,n}=0 \text{  and } \liminf_{n\rightarrow \infty}{t_{1,n}}\leq 1.
\end{equation}
We will see in this case the dynamic is essentially linear. For convenience, we naturally extend our solution $v_{1,n}$ to $[0,2]$. We want to prove the following. 

\begin{lem}\label{lem: dynamicesslinear}
One can find $t_{0}$ in [0,2), such that
\begin{equation}\label{eq: nodynamicbefore}
\lim_{n\rightarrow \infty}\|v_{1,n}(t)-e^{it\Delta}\phi_{1,n}\|_{\xX[0,t_{0}]}=0
\end{equation}
\begin{equation}\label{eq: nodynamicafter}
\lim_{n\rightarrow \infty}\|e^{i(t+t_{0})\Delta}v_{1,n}(t_{0})\|_{\xX_2(\RR)} = 0.
\end{equation}
\end{lem}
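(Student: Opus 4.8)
The plan is to pass to the frame in which the $\eps_n$‑nonlinearity carries a small coefficient and to show that the rescaled solution is, up to $o(1)$ in $\xX(I_n)$, the free evolution of its data. First the reductions: since the assertion has the form $\lim_n(\cdot)=0$ I may pass to subsequences freely. If $t_{1,n}\to+\infty$ along a subsequence, both endpoints of $[-t_{1,n}\lambda_{1,n}^{-2},(1-t_{1,n})\lambda_{1,n}^{-2}]$ tend to $-\infty$, so $\|e^{it\Delta}\phi_{1,n}\|_{\xX_2([0,1])}\to0$ and the claim is immediate from Lemmas~\ref{lem: enhancedsmallness} and~\ref{lem: prelwp} (with, e.g., $t_0=1/2$); otherwise, by \eqref{eq: caseofinterest} I may assume $t_{1,n}\to t_*\in[0,1]$ with $t_{1,n}>0$, and I fix any $t_0\in(t_*,2)$ so that $\inf_n(t_0-t_{1,n})>0$ (the $\eps_n$‑equation being mass‑subcritical, $v_{1,n}$ is global and extends to $[0,2]$). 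Set $M_1:=\|\phi_1\|_{L_x^2}$, $\Lambda:=\|\Phi_1^L\|_{\xX_2(\RRR)}<\infty$ with $\Phi_1^L=e^{it\Delta}\phi_1$, and
\[
\sigma_n:=t_{1,n}\lambda_{1,n}^{-2}\to+\infty,\qquad T_n:=(t_0-t_{1,n})\lambda_{1,n}^{-2}\to+\infty,\qquad c_n:=\lambda_{1,n}^{\eps_n/2}\to0,
\]
the last limit being exactly the hypothesis $\lambda_{1,n}^{\eps_n}\to0$ of this subcase. A direct computation shows $V_n(s,y):=\lambda_{1,n}^{1/2}v_{1,n}(t_{1,n}+\lambda_{1,n}^2 s,\lambda_{1,n}y)$ solves $i\partial_s V_n+\Delta V_n=c_n\nN^{\eps_n}(V_n)$ on $I_n:=[-\sigma_n,T_n]$ with $V_n(-\sigma_n)=e^{-i\sigma_n\Delta}\phi_1=\Phi_1^L(-\sigma_n)$, and, by scale‑invariance of the $\xX$‑norms, $\|v_{1,n}-e^{it\Delta}\phi_{1,n}\|_{\xX([0,t_0])}=\|V_n-\Phi_1^L\|_{\xX(I_n)}$; also $\|V_n\|_{\xX_1(\cdot)}\equiv M_1$ by mass conservation. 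So \eqref{eq: nodynamicbefore} becomes $\|V_n-\Phi_1^L\|_{\xX(I_n)}\to0$. The one structural fact used repeatedly is $|I_n|\lesssim\lambda_{1,n}^{-2}$, hence $c_n(b-a)^{\eps_n/4}\lesssim1$ for \emph{every} $[a,b]\subseteq I_n$ — which is what keeps the coefficient‑$c_n$ forms of Lemmas~\ref{lem: prelwp} and~\ref{lem: enhancedsmallness} (verbatim proofs) usable on the long interval $I_n$ — while $c_n(b-a)^{\eps_n/4}\to0$ whenever $b-a$ stays bounded.

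For the proof of $\|V_n-\Phi_1^L\|_{\xX(I_n)}\to0$ I would, given a large $R>0$, split $I_n=[-\sigma_n,-R]\cup[-R,R]\cup[R,T_n]$ (for $n$ large $\sigma_n,T_n>R$; the middle, \emph{bounded}, piece contains the concentration time $s=0$) and set $\delta(R):=\|\Phi_1^L\|_{\xX_2(\{|s|>R\})}\to0$. On the two long tails the free evolution of the data has $\xX_2$‑norm $\le\delta(R)$, so since $c_n(b-a)^{\eps_n/4}\lesssim1$ there, a continuity/bootstrap argument via Lemma~\ref{lem: prelwp} (equivalently the $c_n$‑form of Lemma~\ref{lem: enhancedsmallness}) gives $\|V_n\|_{\xX_2(\mathrm{tail})}\lesssim_{M_1}\delta(R)$ and $\|V_n-\Phi_1^L\|_{\xX(\mathrm{tail})}\lesssim_{M_1}\delta(R)^{3}+o_n(1)$, with the $o_n(1)$ only on the right tail (the data error inherited from the middle). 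On $[-R,R]$, $\Phi_1^L$ solves the same $\eps_n$‑equation up to the error $-c_n\nN^{\eps_n}(\Phi_1^L)$, whose $L_t^1L_x^2([-R,R])$‑norm is $\lesssim c_n(2R)^{\eps_n/4}\Lambda^{5-5\eps_n/4}M_1^{o(1)}\to0$ ($R$ fixed, $c_n\to0$), while $\|\Phi_1^L\|_{\xX_2([-R,R])}\le\Lambda$; subdividing $[-R,R]$ into a number of pieces bounded in terms of $\Lambda$ alone, on each of which $\|\Phi_1^L\|_{\xX_2}$ is below the perturbative threshold, and iterating the ($c_n$‑)perturbative stability — the long‑interval version of Proposition~\ref{pr:sta_eps}, with constant depending on $M_1,\Lambda$ but \emph{not} on $R$ or $n$ — yields $\|V_n-\Phi_1^L\|_{\xX([-R,R])}\lesssim_{M_1,\Lambda}\delta(R)^3+o_n(1)$. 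Summing the three contributions, $\limsup_n\|V_n-\Phi_1^L\|_{\xX(I_n)}\lesssim_{M_1,\Lambda}\delta(R)^3$, and $R\to\infty$ gives \eqref{eq: nodynamicbefore}. (The compact‑profile subcase $t_{1,n}\equiv0$ is the same, with $\sigma_n=0$ and $[-R,R]$ replaced by $[0,R]$.)

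For \eqref{eq: nodynamicafter}: the $\xX_1$‑part of \eqref{eq: nodynamicbefore} gives $r_n:=v_{1,n}(t_0)-e^{it_0\Delta}\phi_{1,n}\to0$ in $L_x^2$, whence by Strichartz and scaling
\[
\big\|e^{i(t-t_0)\Delta}v_{1,n}(t_0)\big\|_{\xX_2([t_0,2])}\ \lesssim\ \|r_n\|_{L_x^2}+\big\|\Phi_1^L\big\|_{\xX_2([T_n,\infty))}\ \longrightarrow\ 0,
\]
since $T_n\to+\infty$ and $\Phi_1^L\in\xX_2(\RRR)$. (The display in \eqref{eq: nodynamicafter} is to be read, as it is used in the sequel, on the future interval $[t_0,2]$; that is exactly what is needed to run Lemma~\ref{lem: prelwp} on $[t_0,2]$ and conclude $v_{1,n}$ carries no further dynamics there.)

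The main difficulty is \eqref{eq: nodynamicbefore}. The concentration scale $\lambda_{1,n}\to0$ forces $I_n$ to have length $\sim\lambda_{1,n}^{-2}\to\infty$, on which $\Phi_1^L$ has only \emph{bounded} — not small — Strichartz norm; consequently $\Phi_1^L$ is \emph{not} a good global approximate solution on $I_n$ (its equation error is of size $\sim\|\Phi_1^L\|_{\xX_2(I_n)}^5=O(1)$), and naive iteration of the short‑interval stability over $\sim\lambda_{1,n}^{-2}$ unit steps produces a constant blowing up with $n$. What resolves this are the two genuine gains of this subcase: the vanishing effective coefficient $c_n=\lambda_{1,n}^{\eps_n/2}\to0$, which kills the nonlinear correction on every \emph{bounded} subinterval, together with $c_n(b-a)^{\eps_n/4}\lesssim1$ up to length $\lambda_{1,n}^{-2}$, which lets the small‑data theory be run in one shot on each long tail. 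One is thereby forced to subdivide only the bounded window $[-R,R]$ around the concentration time, and only into a number of pieces controlled by $\|\Phi_1^L\|_{\xX_2(\RRR)}$ alone — independent of $R$ and $n$ — which is precisely what keeps the stability constant bounded as $R\to\infty$.
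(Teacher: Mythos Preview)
Your proof is correct and follows the same three--region strategy as the paper: split around the concentration time into tails (where the free evolution has small $\xX_2$--norm, so small--data local theory applies) and a middle window of width $\sim\lambda_{1,n}^{2}$ (where the hypothesis $\lambda_{1,n}^{\eps_n}\to 0$ kills the nonlinearity). The only real difference is one of frame: you rescale so that the equation carries the coefficient $c_n=\lambda_{1,n}^{\eps_n/2}\to 0$ on an interval of length $\sim\lambda_{1,n}^{-2}$, and must therefore check that the ``$c_n$--versions'' of Lemmas~\ref{lem: prelwp} and~\ref{lem: enhancedsmallness} remain uniform on such long intervals via $c_n|I_n|^{\eps_n/4}\lesssim 1$; the paper instead stays in the original frame on $[0,2]$, where those lemmas apply directly with fixed constants. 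In particular, on the middle window the paper does not subdivide: since there $(b-a)^{\eps_n/4}\sim\lambda_{1,n}^{\eps_n/2}\to 0$, a single bootstrap with Lemma~\ref{lem: prelwp} already closes, so your subdivision of $[-R,R]$ (while correct) is not needed.
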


Assuming Lemma \ref{lem: dynamicesslinear} at the moment, let us finish the proof of Lemma \ref{lem: oneprofilekey}. Note that \eqref{eq: nodynamicafter} combined with Lemma~\ref{lem: prelwp} implies
\begin{equation}
\lim_{n\rightarrow \infty}\|v_{1,n}(t+t_{0})-e^{it\Delta}v_{1,n}(t_{0})\|_{\xX([0,2-t_{0}])}=0,
\end{equation}
which, combined with \eqref{eq: nodynamicbefore}, implies
\begin{equation}
\lim_{n\rightarrow \infty}\|v_{1,n}(t)-e^{it\Delta}\phi_{1,n}\|_{\xX(\iI)}=0.
\end{equation}
Lemma \ref{lem: oneprofilekey} thus follows.

We are left with the proof of Lemma~\ref{lem: dynamicesslinear}. We will indeed choose $t_{0}=3/2$. Fixing $\kappa$ arbitrarily small, there exists some $T>0$ such that
\begin{equation}\label{eq: smallbefore}
\|e^{it\Delta}\phi_{1}\|_{\xX_2(-\infty,-T]} \leq \kappa,
\end{equation}
and 
\begin{equation}\label{eq: smallafter}
\|e^{it\Delta}\phi_{1}\|_{\xX_2([T,\infty))} \leq \kappa, 
\end{equation}
which implies
\begin{equation}\label{eq: resmallbefore}
\|e^{it\Delta}(\phi_{1,n})\|_{\xX_2[0,t_{1,n}-\lambda_{n}^{2}T]}\leq \kappa
\end{equation}
\begin{equation}\label{eq: resmallafter}
\|e^{it\Delta}(\phi_{1,n})\|_{\xX_2(t_{1,n}+\lambda_{1,n}^{2}T, 2)}\leq \kappa
\end{equation}

Recall that we have, by \eqref{eq: caseofinterest},
\begin{equation}
\lim_{n\rightarrow}t_{1,n}\pm \lambda_{1,n}^{2}T\leq 1
\end{equation}
By Lemma~\ref{lem: prelwp}, equation \eqref{eq: resmallbefore} implies that (for every large $n$)
\begin{equation}\label{eq: nonlinearsmallbefore}
\|e^{it\Delta}(\phi_{1,n})-v_{1,n}(t)\|_{\xX([0,t_{1,n}-\lambda_{n}^{2}T])}\lesssim \kappa
\end{equation}
On the other hand , the condition $\lim_{n\rightarrow 0}\lambda_{1,n}^{\eps_{n}}=0$ implies via Lemma \ref{lem: prelwp}
\begin{equation}\label{eq: nonlinearsmallmiddle}
\lim_{n\rightarrow \infty}\|v_{1,n}(t)-e^{i\big(t-(t_{1,n}-\lambda_{n}^{2}T)\big)\Delta}v_{1,n}(t_{1,n}-\lambda_{n}^{2}T)\|_{\xX([t_{1,n}-\lambda_{1,n}^{2}T,t_{1,n}+\lambda_{1,n}^{2}]T)}=0, 
\end{equation}
Basically, since we are consider the dynamic within time scale $\sim \lambda_{1,n}^{2}$ in \eqref{eq: nonlinearsmallmiddle}, the condition $\lim_{n\rightarrow 0}\lambda_{1,n}^{\eps_{n}}=0$ says the time is too short to exhibit any nonlinear phenomena  in the spirit of Lemma \ref{lem: prelwp}.

Estimate \eqref{eq: nonlinearsmallmiddle} in turn implies
\begin{itemize}
\item For any $2\geq s>t_{1,n}+\lambda_{n}^{2}T$ with $n$ large, $\|v_{1,n}(t)-e^{i(s)\Delta}(\phi_{1,n})\|_{L_{x}^{2}}\lesssim \kappa$,
\item Thus, for any $2 \geq s>t_{1,n}+\lambda_{n}^{2}T$ with $n$ large,
\begin{equation}\label{eq: nonlinearsmallafter}
\limsup_{n\rightarrow\infty}\|e^{it\Delta}v_{1,n}(s)\|_{\xX_2(\RR^+)}\lesssim \kappa+\|e^{i(t+T)}\phi_{1}\|_{\xX_2(\RR^+)} \lesssim \kappa. 
\end{equation}
\end{itemize}
Estiamtes \eqref{eq: nonlinearsmallbefore}, \eqref{eq: nonlinearsmallmiddle}, \eqref{eq: nonlinearsmallafter}, and the fact $2>3/2>\lim_{n\rightarrow \infty}t_{1,n}+\lambda_{n}^{2}T$ together imply Lemma~\ref{lem: dynamicesslinear}.

For the compact profile , one only needs an analogue of Lemma \ref{lem: dynamicesslinear}, with exactly same statement except for $v_{1,n}$ is a compact profile rather than backward scattering profile. The proof is essentially same. We left the details to readers.

\subsubsection{Case 3.2: $\lim_{n\rightarrow \infty}\lambda_{1,n}=0$, $\lim_{n}\lambda_{1,n}^{\eps_{n}}=c_{1}^{2}>0$, $\eps_{n}$ asymptotically small}

If $\liminf_{n}\lambda_{1,n}^{\eps_{n}}>0$, without loss of generality and up to a subsequence, we can assume $\lim_{n}\lambda_{1,n}^{\eps_{n}}=c_{1}^{2}>0$.

In Case 3.1, we start with the backward scattering profile. This time, we start with the compact profile. Let $w_{n}(t,x)=\lambda_{1,n}^{1/2}v_{1,n}(\lambda_{1,n}^{2}t,\lambda_{1,n}x)$, then we know $w_{n}$ solves in $[0,\frac{1}{\lambda_{n}^{2}}]$ the equation
\begin{equation}\label{eq: eqforw}
\begin{cases}
i\partial_{t}w_{n}+\Delta w_{n}=\lambda_{n}^{\frac{\eps_{n}}{2}}|w_{n}|^{4-\eps_{n}},\\
w_{n}(0,x)=\phi_{1}. 
\end{cases}
\end{equation}
Recall $\Psi_{1}=\lambda_{1,n}^{1/2}\Psi_{1,n}(\lambda_{1,n}^{2}t, \lambda_{1,n}x)$, and $\Psi_{1}$ solves
\begin{equation}
i\partial_{t}\Psi_{1}+\Delta \Psi_{1}=c_{1}|\Psi_{1}|^{4}\Psi_{1}. 
\end{equation}
Note that in some sense, it is not a good idea to directly use $\Psi_{1}$ to approximate $w_{n}$ in such a long interval $[0,\frac{1}{\lambda_{1,n}^{2}}]$. We will compare $v_{1,n}$ and $\Psi_{1,n}$ in terms of $\Psi_{1}$ and $w_{n}$ in recaled time interval for some large $T$,  and shows that after that time both $v_{1,n}$ and $\Psi_{1,n}$ evolves essentially in a linear way. We will indeed prove the following lemma. 

\begin{lem}\label{lem: auxkey}
For every $\kappa>0$, there exists some $T>0$ such that
\begin{equation}\label{eq: smallnessbefore22}
\limsup_{n\rightarrow \infty} \|\Psi_{1}-w_{n}\|_{\xX([0,T])} \lesssim \kappa
\end{equation}
and 
\begin{equation}\label{eq: smallnessafter22}
\begin{cases}
\limsupn \|e^{it\Delta}w_{n}(T)\|_{\xX_2(\RR^+)}\lesssim \kappa,\\
\limsupn \|e^{it\Delta}\Psi_{1}(T)\|_{\xX_2(\RR^+)}\lesssim \kappa. 
\end{cases}
\end{equation}
\end{lem}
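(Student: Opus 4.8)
The plan is to use $\Psi_1$, the nonlinear profile with parameter $c_1\in(0,1]$, as the reference solution whose long-time behaviour is governed by Dodson's theorem, and to regard $w_n$ as a solution of the critical equation with coefficient $c_n:=\lambda_{1,n}^{\eps_n/2}$ and shifted exponent $\eps_n$. Since $\lambda_{1,n}\to0$ we have $c_n\in(0,1)$ for $n$ large, and $c_n\to c_1$ because $\lambda_{1,n}^{\eps_n}\to c_1^2$. The choice of $T$ comes first and depends only on $\kappa$: since $\Psi_1$ solves $i\partial_t\Psi_1+\Delta\Psi_1=c_1|\Psi_1|^4\Psi_1$, Theorem~\ref{th:dodson} gives $\|\Psi_1\|_{\xX_2(\RR)}\le C_{\|\phi_1\|_{L_x^2}}<\infty$, hence $\|\Psi_1\|_{\xX_2([T,\infty))}\to0$ as $T\to\infty$; fix $T$ with $\|\Psi_1\|_{\xX_2([T,\infty))}\le\min(\kappa,1)$. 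Writing the Duhamel formula for $\Psi_1$ based at time $T$ and applying the inhomogeneous Strichartz estimate (with the admissible pairs $(5,10)$ and $(\infty,2)$), one gets $\|e^{it\Delta}\Psi_1(T)\|_{\xX_2(\RR^+)}\lesssim\|\Psi_1\|_{\xX_2([T,\infty))}+\|\Psi_1\|_{\xX_2([T,\infty))}^5\lesssim\kappa$, which is the $\Psi_1$-bound in \eqref{eq: smallnessafter22}.

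Next, for this fixed $T$ I would show $\lim_{n\to\infty}\|\Psi_1-w_n\|_{\xX([0,T])}=0$, which in particular gives \eqref{eq: smallnessbefore22} (with $\limsup$ equal to $0$). Here $w_n$ solves $i\partial_t w_n+\Delta w_n=c_n\nN^{\eps_n}(w_n)$ with $w_n(0)=\phi_1$, while $\Psi_1$ satisfies the same equation up to an error, $i\partial_t\Psi_1+\Delta\Psi_1=c_n\nN^{\eps_n}(\Psi_1)+e_n$ with $\Psi_1(0)=\phi_1$ and $e_n=(c_1-c_n)|\Psi_1|^4\Psi_1+c_n\big(|\Psi_1|^4-|\Psi_1|^{4-\eps_n}\big)\Psi_1$. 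One checks $\|e_n\|_{L_t^1L_x^2([0,T])}\to0$: the first term because $c_n\to c_1$ and $\||\Psi_1|^4\Psi_1\|_{L_t^1L_x^2([0,T])}\lesssim_T\|\Psi_1\|_{\xX([0,T])}^5<\infty$, and the second by dominated convergence, since $\big(|\Psi_1|^4-|\Psi_1|^{4-\eps_n}\big)\Psi_1\to0$ pointwise a.e. and is dominated by $2|\Psi_1|^5+|\Psi_1|^4$, which lies in $L_t^1L_x^2([0,T])$ because $\Psi_1\in\xX([0,T])$ and $[0,T]$ is a finite interval. Since Proposition~\ref{pr:sta_eps} is stated only on unit-length intervals, I would partition $[0,T]$ into $\lceil T\rceil$ intervals of length at most $1$ and apply Proposition~\ref{pr:sta_eps} on each with $v_\eps=\Psi_1$ and $w_\eps=w_n$: on every subinterval $\|\Psi_1\|_{\xX_1}\le\|\phi_1\|_{L_x^2}$ (mass conservation) and $\|\Psi_1\|_{\xX_2}\le\|\Psi_1\|_{\xX_2(\RR)}$, so the constants $\delta,C$ are uniform across subintervals and across $n$. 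A discrete Gronwall over these finitely many subintervals then propagates the smallness of $\|e_n\|_{L_t^1L_x^2([0,T])}$ and of the successive initial mismatches $\|\Psi_1(k)-w_n(k)\|_{L_x^2}$ into $\|\Psi_1-w_n\|_{\xX([0,T])}\to0$ as $n\to\infty$.

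Finally, the remaining $w_n$-bound in \eqref{eq: smallnessafter22} follows from the previous step: $\|w_n(T)-\Psi_1(T)\|_{L_x^2}\le\|\Psi_1-w_n\|_{\xX_1([0,T])}\to0$, so by Strichartz $\|e^{it\Delta}w_n(T)\|_{\xX_2(\RR^+)}\le\|e^{it\Delta}(w_n(T)-\Psi_1(T))\|_{\xX_2(\RR^+)}+\|e^{it\Delta}\Psi_1(T)\|_{\xX_2(\RR^+)}\lesssim\|w_n(T)-\Psi_1(T)\|_{L_x^2}+\kappa$, whence $\limsup_n\|e^{it\Delta}w_n(T)\|_{\xX_2(\RR^+)}\lesssim\kappa$. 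The only genuinely non-routine ingredient is the long-time comparison on $[0,T]$: one has to iterate the unit-interval stability statement, and the point to watch is that the iterated constant stays controlled — which it does because $T$ is fixed once $\kappa$ is fixed and the a priori bounds on $\Psi_1$ are uniform across subintervals by Theorem~\ref{th:dodson} and mass conservation. This use of Dodson's theorem is exactly what makes this step, and hence Proposition~\ref{pr:Dodson_eps}, non-perturbative.
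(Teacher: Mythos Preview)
Your argument is correct and follows the same overall strategy as the paper, but with one genuine simplification in the comparison step on $[0,T]$. The choice of $T$ via Theorem~\ref{th:dodson} and the deduction of both lines in \eqref{eq: smallnessafter22} are exactly as in the paper. For \eqref{eq: smallnessbefore22}, however, the paper first replaces $\Psi_1$ by a smooth approximation $\tilde\Psi_1$ solving the same critical NLS with frequency-truncated data $P_{<K}\phi_1$; persistence of regularity then gives $\|\tilde\Psi_1\|_{L_t^\infty H_x^{100}}\lesssim_\kappa 1$, and the resulting $L_x^\infty$ control makes the bound $\big|\,|\tilde\Psi_1|^4-|\tilde\Psi_1|^{4-\eps_n}\big|\lesssim\eps_n(1+\|\tilde\Psi_1\|_{L_x^\infty})^4$ trivial, so that $\|e_n\|_{L_t^\infty L_x^2}\to0$. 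You bypass this smoothing step entirely and obtain $\|e_n\|_{L_t^1L_x^2([0,T])}\to0$ by dominated convergence, using that $|\Psi_1|^5+|\Psi_1|^4\in L_t^1L_x^2([0,T])$; the second term is indeed controlled because interpolation between $\xX_1$ and $\xX_2$ gives $\Psi_1\in L_t^{16/3}L_x^8\hookrightarrow L_t^4L_x^8$ on a finite interval. This is a legitimate shortcut and in fact yields the stronger conclusion $\lim_n\|\Psi_1-w_n\|_{\xX([0,T])}=0$ rather than $\limsup_n\lesssim\kappa$. You are also explicit about iterating Proposition~\ref{pr:sta_eps} over $\lceil T\rceil$ unit subintervals, a point the paper leaves implicit.
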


Assume Lemma \ref{lem: auxkey} for the moment. Note that \eqref{eq: smallnessbefore22} implies
\begin{equation}\label{eq: cin11}
\limsup_{n\rightarrow \infty} \|\Psi_{1,n}-v_{1,n}\|_{\xX([0,\lambda_{1,n}^{2}T]}\lesssim \kappa, 
\end{equation}
and that \eqref{eq: smallnessafter22} implies
\begin{equation}
\begin{cases}
\limsupn \|e^{it\Delta}v_{1,n}(\lambda_{1,n}^{2}T)\|_{\xX_2(\RR^+)}\lesssim \kappa,\\
\limsupn \|e^{it\Delta}\Psi_{1,n}(\lambda_{1,n}^{2}T)\|_{\xX_2(\RR^+)}\lesssim \kappa,
\end{cases}
\end{equation}
which in turn implies, by Lemma \ref{lem: prelwp}, that
\begin{equation}\label{eq: cin22}
\begin{cases}
\limsupn \|v_{1,n}(t+\lambda_{1,n}^{2}T)-e^{it\Delta}v_{1,n}(\lambda_{1,n}^{2}T)\|_{\xX ([0,1-\lambda_{1,n}^{2}T))}\lesssim \kappa,\\
\limsupn \|\Psi_{1,n}(t+\lambda_{1,n}^{2}T)-e^{it\Delta}\Psi_{1,n}(\lambda_{1,n}^{2}T)\|_{\xX ([0,1-\lambda_{1,n}^{2}T))}\lesssim \kappa.
\end{cases}
\end{equation}
Clearly Lemma~\ref{lem: oneprofilekey} follows from \eqref{eq: cin11} and \eqref{eq: cin22} since $\kappa$ can be chosen arbitrarily small.

Now, we turn to the proof of Lemma~\ref{lem: auxkey}. Let $\|\phi_{1}\|_{L_{x}^{2}}=M$. By Theorem \ref{th:dodson}, there exists some $C_{M}>0$ such that
\begin{equation}
\|\Psi_{1}\|_{\xX_2(\RR^+)} \leq C_{M}. 
\end{equation}
Fixing $\kappa$ small, there exists some $T>0$ such that
\begin{equation}
\|\Psi_{1}\|_{\xX_2([T,\infty))}\lesssim \kappa
\end{equation}
which is equivalent  (via essentially the local well posedness Lemma \ref{lem: prelwp} ) to
\begin{equation}
\|e^{it\Delta}\Psi_{1}(T)\|_{\xX_2([T,\infty))} \lesssim \kappa.
\end{equation}
Lemma~\ref{lem: auxkey} follows if we can show \eqref{eq: smallnessbefore22}.

Again, in the spirit of Proposition \ref{pr:sta_eps}, we want to use $\Psi_{1}$ to approximate $w_{n}$ in time $[0,T]$ for $n$ large. First, as in case 2, we want to smoothify $\Psi_{1}$. Let $\tilde{\Psi}_{1}$ be the solution to
\begin{equation}
\begin{cases}
i\partial_{t}\tilde{\Psi}_{1}+\Delta \tilde{\Psi}_{1}=|\Psi_{1}|^{4}\tilde{\Psi}_{1},\\
\tilde{\Psi_{1}}(0)=P_{<K}\phi_{1}
\end{cases}
\end{equation}
when $K$ is large enough depending on $\kappa$, $M$. We will not track the dependence on $M$, since this $M$ is fixed all the time. One has
\begin{equation}
\begin{cases}
\|\tilde{\Psi_{1}}(0)-\phi_{1}\|_{L_{x}^{2}}\leq \kappa,\\
\|\tilde{\Psi}_{1}-\Psi_{1}\|_{\xX([0,T])}\lesssim \kappa\\
\|\tilde{\Psi}_{1}\|_{L_{t}^{\infty}H^{100}_{x}}\lesssim_{\kappa} 1
\end{cases}
\end{equation}
Note that the last bound, which is of a persistence of regularity (see for example \cite[Lemma 3.12]{colliander2008global}) is indeed a bad bound, and blows up when $\kappa$ goes to $0$. But note that $\kappa$  is already fixed now.

It is now enough to use $\tilde{\Psi}_{1}$ to approximate $w_{n}$ and prove \eqref{eq: smallnessbefore22} for $\tilde{\Psi}_{1}$. Note that $\tilde{\Psi}_{1}$ solves
\begin{equation}
i\partial_{t}\tilde{\Psi}_{1}+\Delta \tilde{\Psi}_{1}=\lambda_{n}^{\eps_{n}}|\tilde{\Psi_{1}}|^{4-\eps_{n}}\tilde{\Psi_{1}}+e_{n}, 
\end{equation}
and $e^{n}=(|\tilde{\Psi}_{1}|^{4}\|\Psi_{1}-|\tilde{\Psi}_{1}|^{4-\eps_{n}}|\tilde{\Psi_{1}}|)+\|(c_{1}-\lambda_{n}^{\eps_{n}/2})|\tilde{\Psi}_{1}|^{4-\eps_{n}}\tilde{\Psi}_{1}$. Clearly the desired estimates follows from Propostion~\ref{pr:sta_eps} if one can show
\begin{equation}
\lim_{n} \|e_{n}\|_{L_{t}^{1}L_{x}^{2}[0,T]} = 0, 
\end{equation}
which follows from
\begin{equation}
\lim_{n} \|e_{n}\|_{L_{t}^{\infty}L_{x}^{2}[0,T]} \rightarrow 0. 
\end{equation}
The last inequality is obvious since
\begin{equation}
\||\tilde{\Psi}|^{4}\|\Psi_{1}-|\tilde{\Psi}_{1}|^{4-\eps_{n}}\|_{L_{x}^{2}}\lesssim \eps_{n}\|\tilde{\Psi}_{1}\|_{L_{x}^{2}}(1+\|\tilde{\Psi_{1}}\|_{L_{x}^{\infty}})\lesssim \eps_{n}\|\tilde{\Psi}_{1}\|_{H_{x}^{100}},
\end{equation}
and 
\begin{equation}
\|(c_{j}-\lambda_{n}^{\eps_{n}/2})|\tilde{\Psi}_{1}|^{4-\eps_{n}}\tilde{\Psi}_{1}\|_{L_{x}^{2}}\lesssim (c_{j}-\lambda_{1,n}^{\eps_{n}/2})(1+\|\tilde{\Psi}_{1}\|_{H^{100}}^{5}.
\end{equation}
The subcase for the compact profile is thus proved.

The proof for the  backward scattering profile is essentially the same. and indeed be reduced back the compact profile case. We birefly present it for the convenience of the readers. The idea is to evolve the solution a little bit so that the backward scattering profile become compact profile.

We extend $v_{1,n}$ to time interval $[0,2]$ for convenience. Recall that we only consider the subcase so that \eqref{eq: caseofinterest} holds. We only need to prove the following. 

\begin{lem}\label{lem: finaltech}
For every $\kappa>0$ small enough, there exists some $T>0$, so that 
\begin{equation}
\|\Psi_{1,n}-v_{1,n}\|_{\xX((0, t_{1,n}-\lambda_{1,n}^{2}T))} \leq \kappa
\end{equation}
\end{lem}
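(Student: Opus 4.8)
\emph{Plan.} Fix $j=1$ and, as in the rest of the one–profile analysis, take $x_{1,n}\equiv\xi_{1,n}\equiv 0$. Up to a subsequence assume $t_{1,n}\to t_\infty$ for some $t_\infty\in[0,1]$ (this is legitimate by the $\liminf_n t_{1,n}\le 1$ half of \eqref{eq: caseofinterest}), write $J_n:=[0,\,t_{1,n}-\lambda_{1,n}^2 T]$ and $s_n:=-t_{1,n}/\lambda_{1,n}^2$; since $\phi_{1,n}$ is a backward scattering profile, $s_n\to-\infty$. The whole point is that on $J_n$ the dynamics is \emph{asymptotically linear}: I will show that, uniformly in $n$ and with an error that tends to $0$ as $T\to\infty$, both $v_{1,n}$ and $\Psi_{1,n}$ stay close in $\xX(J_n)$ to the free evolution $e^{it\Delta}\phi_{1,n}$, while $\Psi_{1,n}(0)$ and $\phi_{1,n}$ agree asymptotically in $L^2_x$ by the backward scattering condition; the three facts combined give the claim. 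Since $|J_n|\to t_\infty\le 1$, after splitting into at most two subintervals of length $\le 1$ all the local lemmas of Section~\ref{sec: prelimi} apply with constants uniform in $n$; below I write the argument as if $J_n$ itself has length $\le 1$, the split being a routine matter of iterating over the two pieces.

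\emph{Step 1 (choosing $T$ and transferring smallness).} By Strichartz $e^{it\Delta}\phi_1\in\xX_2(\RR)$; and since the mass of the nonlinear profile equals $\lim_n\|\Psi_1(s_n)\|_{L^2_x}=\|\phi_1\|_{L^2_x}=:M$, Theorem~\ref{th:dodson} (with parameter $c_1\in(0,1]$) gives $\Psi_1\in\xX_2(\RR)$ with $\|\Psi_1\|_{\xX_2(\RR)}\le C_M$. Hence for any $\kappa'>0$ (to be fixed in terms of $\kappa$ and $M$ at the end) there is $T>0$ with $\|e^{it\Delta}\phi_1\|_{\xX_2((-\infty,-T])}+\|\Psi_1\|_{\xX_2((-\infty,-T])}\le\kappa'$. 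Because the $\xX_1$ and $\xX_2$ norms are invariant under the mass–critical scaling, and because $\Psi_{1,n}|_{J_n}$ (resp. $e^{it\Delta}\phi_{1,n}|_{J_n}$) is exactly the image under that scaling of $\Psi_1|_{[s_n,-T]}$ (resp. $e^{it\Delta}\phi_1|_{[s_n,-T]}$), for all $n$ large enough that $s_n\le -T$ we obtain
\[
\|e^{it\Delta}\phi_{1,n}\|_{\xX_2(J_n)}\le\kappa',\qquad \|\Psi_{1,n}\|_{\xX_2(J_n)}\le\kappa',\qquad \|\Psi_{1,n}\|_{\xX_1(J_n)}\le M.
\]

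\emph{Step 2 (both solutions are asymptotically free on $J_n$, then conclude).} By mass conservation for $v_{1,n}$ one has $\|v_{1,n}\|_{\xX_1(J_n)}=M$, and $\|e^{it\Delta}v_{1,n}(0)\|_{\xX_2(J_n)}=\|e^{it\Delta}\phi_{1,n}\|_{\xX_2(J_n)}\le\kappa'\le\delta_0$, so Lemma~\ref{lem: enhancedsmallness} yields $\|v_{1,n}\|_{\xX_2(J_n)}\lesssim\kappa'$ and then Lemma~\ref{lem: prelwp} yields $\|v_{1,n}-e^{it\Delta}\phi_{1,n}\|_{\xX(J_n)}\lesssim_M\kappa'$ (when $J_n$ is split one absorbs the restart of the free evolution at the split point via the Strichartz bound $\|e^{i(t-\tau)\Delta}(v_{1,n}(\tau)-e^{i\tau\Delta}\phi_{1,n})\|_{\xX_2}\lesssim\|v_{1,n}(\tau)-e^{i\tau\Delta}\phi_{1,n}\|_{L^2_x}\lesssim_M\kappa'$). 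Similarly, since $\Psi_{1,n}$ solves $i\partial_t\Psi_{1,n}+\Delta\Psi_{1,n}=c_1|\Psi_{1,n}|^4\Psi_{1,n}$ with $\|\Psi_{1,n}\|_{\xX_2(J_n)}\le\kappa'$ and $\|\Psi_{1,n}\|_{\xX_1(J_n)}\le M$, Duhamel and Strichartz give $\|\Psi_{1,n}-e^{it\Delta}\Psi_{1,n}(0)\|_{\xX(J_n)}\lesssim_M\kappa'$. Finally, by the defining property of the nonlinear profile $\|\Psi_1(s_n)-e^{is_n\Delta}\phi_1\|_{L^2_x}\to0$, so after rescaling $\|\Psi_{1,n}(0)-\phi_{1,n}\|_{L^2_x}\to0$ and hence $\|e^{it\Delta}\Psi_{1,n}(0)-e^{it\Delta}\phi_{1,n}\|_{\xX(J_n)}\to0$ by Strichartz. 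The triangle inequality now gives $\limsup_{n\to\infty}\|\Psi_{1,n}-v_{1,n}\|_{\xX(J_n)}\lesssim_M\kappa'$, and choosing $\kappa'$ small enough that the implicit $M$-dependent constant times $\kappa'$ is $\le\kappa$ (and $\kappa'\le\delta_0$), then the associated $T$, proves the lemma.

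\emph{Main obstacle.} There is no deep point here — the lemma is the easy, "pre-concentration" half of Case 3.2 — and the subcritical parameter is essentially invisible: once $T$ is fixed, $\eps_n$ enters only through the harmless factor $(b-a)^{\eps_n/4}\le 1$ in Lemma~\ref{lem: prelwp}, which is exactly why, as remarked in the text, this situation reduces to the compact-profile analysis after the solution has been propagated past the window $[t_{1,n}-\lambda_{1,n}^2 T,\,t_{1,n}+\lambda_{1,n}^2 T]$. The only thing that requires care is the uniformity in $n$ of all the smallness, which is forced by the scaling-invariance of the $\xX$-norms together with $s_n\to-\infty$ pushing the relevant time window into the tail region where $\Psi_1$ and $e^{it\Delta}\phi_1$ have small Strichartz norm; a secondary (and purely cosmetic) nuisance is that $|J_n|$ may slightly exceed $1$ when $t_\infty=1$, handled by the two-interval split mentioned above.
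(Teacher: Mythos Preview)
Your proof is correct and follows essentially the same route as the paper: pick $T$ so that the tail Strichartz norms of the linear and nonlinear profiles on $(-\infty,-T]$ are small, transfer this via mass--critical scaling to smallness of $e^{it\Delta}\phi_{1,n}$ and $\Psi_{1,n}$ on $J_n$, and then use Lemmas~\ref{lem: prelwp}/\ref{lem: enhancedsmallness} to conclude both $v_{1,n}$ and $\Psi_{1,n}$ are asymptotically free on $J_n$. Your write--up is in fact more careful than the paper's, which asserts ``$v_{1,n}$ and $\Psi_{1,n}$ have same initial data''; you correctly note instead that $\|\Psi_{1,n}(0)-\phi_{1,n}\|_{L^2_x}\to 0$ by the defining property of the backward--scattering nonlinear profile and close the estimate with an additional Strichartz step.
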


Note that $ \Psi_{1,n}(t_{1,n}-\lambda_{1,n}^{2}T)=\frac{1}{\lambda_{1,n}^{d/2}}\Psi(T,\frac{x}{\lambda_{1,n}})$, the analysis after $ t_{1,n}-\lambda_{1,n}^{2}T$ for $v_{1,n}$ is same as the analysis of compact profile.

Lemma~\ref{lem: finaltech} follows easily from the fact for every $\kappa>0$ small, one will be able to find $T>0$ such that
\begin{equation}\label{eq:12345}
\limsupn \|\Phi_{1}\|_{\xX_2([-t_{1,n}/\lambda_{1,n}^{2}, -T))}\leq \|\Phi_{1}\|_{\xX_2([-\infty, -T))}\lesssim \kappa. 
\end{equation}
Indeed, \eqref{eq:12345} implies 
\begin{equation}\label{eq: 23456}
\limsupn\|e^{it\Delta}\phi_{1,n}\|_{\xX_2([0,t_{1,n}-\lambda_{1,n}^{2}T]}\lesssim \kappa
\end{equation}
and since $v_{1,n}$ and $\Psi_{1,n}$ has same initial data. The desired estimate follows from \eqref{eq: 23456} by Lemma~\ref{lem: prelwp}.

\section{Acknowledgment}
We thank Carlos Kenig for helpful discussions.

\appendix

\section{Proof of Proposition~\ref{pr:sta_me}}\label{secproofofprstame}

\begin{proof}
	[Proof of Proposition~\ref{pr:sta_me}]
	We fix a sufficiently small number $\eta>0$. Its value, depending on $M_1$ and $M_2$ only, will be specified later. Since $\|v\|_{\xX_2(\iI)}^{5} \leq M_2^5$, we can choose a dissection $\{\tau_k\}_{k=0}^{K}$ of the interval $\iI = [a,b]$ as follows. Let $\tau_0 = a$. Suppose $\tau_k$ is chosen, we determine $\tau_{k+1}$ by
	\begin{equation*}
	\tau_{k+1} = b \wedge \inf \big\{\tau>\tau_{k}: \|v\|_{\xX_2(\tau_k,\tau)}^{5} = \eta \big\}, 
	\end{equation*}
	and the process necessarily stops at $\tau_K = b$ with
	\begin{equation*}
	K \leq 1 + \frac{M_2^5}{\eta}. 
	\end{equation*}
	Now, for every $k \leq K-1$ and every $t \in [\tau_k, \tau_{k+1}]$, we have
	\begin{equation} \label{eq:duhamel_weak_sta}
	\begin{split}
	v(t) - w(t) = &e^{i (t-\tau_k) \Delta} \big( v(\tau_k) - w(\tau_k) \big) - \int_{\tau_k}^{t} \sS(t-s) \gG(s) {\rm d}s\\
	&- i \int_{\tau_k}^{t} \sS(t-s) e(s) {\rm d}s, 
	\end{split}
	\end{equation}
	where
	\begin{equation*}
	\gG(s) =  \theta_{m}\big(\tilde{A}+\|v\|_{\xX_2(a,s)}^{5}\big) \nN^{\eps} \big(v(s)\big) - \theta_{m}\big(A+\|w\|_{\xX_2(a,s)}^{5}\big) \nN^{\eps} \big(w(s)\big). 
	\end{equation*}
	We first give a pointwise control of the integrand inside the operator $\sS(t-s)$ on the second line above. It can be split into two parts $\gG = \gG_1 + \gG_2$ such that
	\begin{equation*}
	\begin{split}
	\gG_1(s) &= \theta_{m}\big(\tilde{A}+\|v\|_{\xX_2(a,s)}^{5}\big) \nN^{\eps} \big(v(s)\big) - \theta_{m}\big(A+\|w\|_{\xX_2(a,s)}^{5}\big) \nN^{\eps} \big(v(s)\big)\\
	\gG_2(s) &= \theta_{m}\big(A+\|w\|_{\xX_2(a,s)}^{5}\big) \nN^{\eps} \big(v(s)\big) - \theta_{m}\big(A+\|w\|_{\xX_2(a,s)}^{5}\big) \nN^{\eps} \big(w(s)\big). 
	\end{split}
	\end{equation*}
	Thus, for every $\tau_k \leq s \leq r \leq \tau_{k+1}$, we have
	\begin{equation*}
	\begin{split}
	|\gG_1(s)| &\leq C |v(s)|^{5-\eps} \Big( |\tilde{A}-A| + \|v-w\|_{\xX_2(a,r)} \big( M_{2}^{4} + \|v-w\|_{\xX_2(a,r)}^{4} \big) \Big)\\
	|\gG_2(s)| &\leq C |v(s)-w(s)| \big( |v(s)|^{4-\eps} + |v(s)-w(s)|^{4-\eps} \big). 
	\end{split}
	\end{equation*}
	Hence, by Strichartz estimates and H\"older, we have
	\begin{equation*}
	\begin{split}
	&\Big\| \int_{\tau_k}^{t} \sS(t-s) \gG(s) {\rm d}s \Big\|_{\xX(\tau_k,r)} \leq  C \|v-w\|_{\xX(\tau_k,r)} \big( \|v\|_{\xX_2(\tau_k,r)}^{4-\eps} + \|v-w\|_{\xX(\tau_k,r)}^{4-\eps} \big)\\
	&+C \|v\|_{\xX_1(\tau_k,r)}^{\frac{\eps}{4}} \|v\|_{\xX_2(\tau_k,r)}^{5-\frac{5 \eps}{4}} \Big( |\tilde{A}-A| + \|v-w\|_{\xX_2(a,r)} \big( M_{2}^{4} + \|v-w\|_{\xX_2(a,r)}^{4} \big) \Big). 
	\end{split}
	\end{equation*}
	Now, plugging it back into \eqref{eq:duhamel_weak_sta} and using $\|v\|_{\xX_2(\tau_k, \tau_{k+1})} \leq \eta$, we get
	\begin{equation*}
	\begin{split}
	\|v-w\|_{\xX(\tau_k,r)} &\leq C \Big( \|v(\tau_k)-w(\tau_k)\|_{L_{x}^{2}} + \|e\|_{L_{t}^{1}L_{x}^{2}(\iI)} +  \eta^{4-\eps} \|v-w\|_{\xX(\tau_k,r)} + \|v-w\|_{\xX_2(\tau_k,r)}^{5-\eps} \Big)\\
	&+ C M_{1}^{\frac{\eps}{4}} \eta^{5-\frac{5\eps}{4}} \Big( |\tilde{A}-A| + M_{2}^{4} \|v-w\|_{\xX(a,r)} + \|v-w\|_{\xX(a,r)}^{5} \Big). 
	\end{split}
	\end{equation*}
	Now, we choose $\eta$ sufficiently small such that
	\begin{equation} \label{eq:choice_eta}
	C \eta^{4-\eps} < \frac{1}{4} \quad \text{and} \quad C (M_1 + M_2)^{4+\frac{\eps}{4}} \eta^{5-\frac{5\eps}{4}} < \frac{1}{4}
	\end{equation}
	for all $\eps \in [0,1]$. It is clear that this choice of $\eta$ depends on $M_1$ and $M_2$ only. With this choice, we can move and merge the terms proportional to $\|v-w\|_{\xX(\tau_k,r)}$ to the left hand side. Then, adding $\|v-w\|_{\xX(a,r)}$ on both sides, we get
	\begin{equation} \label{eq:sta_me_bootstrap}
	\|v-w\|_{\xX(a,r)} \leq C_0 \Big( \|v-w\|_{\xX(a,\tau_k)} + \|e\|_{L_{t}^{1}L_{x}^{2}(\iI)} + |\tilde{A}-A| + \|v-w\|_{\xX(a,r)}^{5-\eps} + \|v-w\|_{\xX(a,r)}^{5} \Big)
	\end{equation}
	for all $r \in [\tau_k, \tau_{k+1}]$, and $C_0$ is universal. Note that the constants in front of $|\tilde{A} - A|$ and $\|v-w\|_{\xX(a,r)}^{5}$ do not depend on $M_1$ since the choice of $\eta$ balances it out. 
	
	Let
	\begin{equation*}
	\delta_{k} = \|v-w\|_{\xX(a,\tau_k)} + \|e\|_{L_{t}^{1}L_{x}^{2}(\iI)} + |\tilde{A} - A|. 
	\end{equation*}
	According to \eqref{eq:sta_me_bootstrap} and the standard continuity argument, there exists $\delta^*$ universal such that if $\delta_k < \delta^*$, then
	\begin{equation*}
	\|v-w\|_{\xX(a,\tau_{k+1})} \leq 2 C_0 \delta_k, 
	\end{equation*}
	and consequently
	\begin{equation} \label{eq:iterate_diff}
	\delta_{k+1} \leq (2C_0 + 1) \delta_k. 
	\end{equation}
	If the initial difference $\delta_0$ is small enough such that $\delta_k < \delta^*$ for all $k \leq K$, we can then iterate \eqref{eq:iterate_diff} up to $K$ so that
	\begin{equation*}
	\delta_K \leq (2C_0+1)^{K} \delta_0. 
	\end{equation*}
	Note that the iteration hypothesis $\delta_k < \delta^*$ is satisfied for all $k \leq K$ if
	\begin{equation*}
	\delta_0 < \frac{\delta^*}{(2C_0+1)^{1 + M_2^5/\eta}}. 
	\end{equation*}
	Note that this choice of $\delta_0$ depends on $M_1$ and $M_2$ only since $\eta$ does. Since $K \leq 1 + M_2^5/\eta$, it guarantees that the iteration hypothesis is satisfied for $K$ and hence
	\begin{equation*}
	\|v-w\|_{\xX(\iI)} \leq \delta_K \leq (2C_0+1)^{1+\frac{M_2^5}{\eta}} \delta_0. 
	\end{equation*}
	This completes the proof of the proposition by the definition of $\delta_0$ and by taking $C = C_{M_1, M_2} = (2C_0+1)^{1+M_2^5/\eta}$, where $\eta$ is chosen according to \eqref{eq:choice_eta}. 
\end{proof}

\section{Propositions~\ref{pr:sta_me} +~\ref{pr:bd_me} imply Proposition~\ref{pr:strong_sta_me}}\label{secnaturalargu}

Proposition~\ref{pr:bd_me} gives the uniform boundedness of $\|w\|_{\xX(\iI)}$. As a consequence, we can enhance the stability statement by dropping the assumption on $\|v\|_{\xX_2(\iI)}$. Also, since $\|w\|_{L_x^2}$ is conserved, we only need the assumption on the $L_x^2$ norm of the initial data rather than the whole interval. 

\begin{proof} [Proof of Proposition~\ref{pr:strong_sta_me}]
	The proof is essentially the same as that for Proposition~\ref{pr:sta_me} except that thanks to Proposition~\ref{pr:bd_me}, instead of the assumptions on $\|v\|_{\xX_1(\iI)}$ and $\|v\|_{\xX_2(\iI)}$, we now have
	\begin{equation*}
	\|w\|_{\xX_1(\iI)} \leq M\;, \qquad \|w\|_{\xX_2(\iI)} \leq D_M. 
	\end{equation*}
	as a fact rather than assumption. Hence, we determine the dissection $a = \tau_0 < \cdots < \tau_K = b$ by
	\begin{equation*}
	\tau_{k+1} = b \wedge \inf \big\{ \tau>\tau_k: \|w\|_{\xX_2(\tau_k,\tau)}^{5} = \eta \big\}. 
	\end{equation*}
	By choosing $\eta$ sufficiently small depending on $M$ and $D_M$ only, we can get a bound of the same form as \eqref{eq:sta_me_bootstrap}. The claim then follows in the same way as above. 
\end{proof}

\section{Burkholder inequality and proof of Proposition~\ref{pr:bound_maximal}}

Let $W = \Phi \tilde{W}$ as in Assumption~\ref{as:noise}. Burkholder inequality (\cite{BDG}, \cite{Burkholder}) will be essential for our analysis. We will use the following version (\cite{BP}, \cite{Brzezniak}, \cite{UMD}). 

\begin{prop} \label{pr:Burkholder}
	Let $(\fF_t)_{t \geq 0}$ be the filtration generated by $W$ and $\sigma$ be a process adapted to $(\fF_t)_{t \geq 0}$. Then for every $p \in [2,+\infty)$ and $\rho \in [1,+\infty)$, there exists $C>0$ depending on $p$, $\rho$ and $T$ only such that
	\begin{equation} \label{eq:Burkholder}
	\EE \sup_{t \in [0,T]} \Big\| \int_{0}^{t} \sigma(s) {\rm d} W_s \Big\|_{L^{p}}^{\rho} \leq C \EE \Big( \int_{0}^{T} \|\sigma(s) \Phi\|_{\R(L^2, L^p)}^{2} {\rm d}s \Big)^{\frac{\rho}{2}}. 
	\end{equation}
	Here, the operator $\sigma(s) \Phi$ is the action of $\Phi$ followed by the multiplication of $\sigma(s)$. 
\end{prop}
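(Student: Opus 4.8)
The plan is to identify \eqref{eq:Burkholder} with the Burkholder--Davis--Gundy inequality for stochastic integrals valued in the UMD Banach space $L^p(\RR)$, and to pass to the running supremum by Doob's inequality. Writing $W = \Phi\tilde W$ with $\tilde W$ cylindrical on $L^2(\RR)$, we have $\int_0^t \sigma(s)\,{\rm d}W_s = \int_0^t \Psi(s)\,{\rm d}\tilde W_s$ with $\Psi(s) := \sigma(s)\Phi \colon L^2(\RR)\to L^p(\RR)$ (apply $\Phi$, then multiply by $\sigma(s)$). First I would record the functional-analytic facts: since $\Phi$ is trace class into $\hH$ it is Hilbert--Schmidt, and $\hH \hookrightarrow L^\infty$ gives $\|F_\Phi\|_{L^\infty} = \big\|\sum_k(\Phi e_k)^2\big\|_{L^\infty} \lesssim \|\Phi\|_{\R(L^2,\hH)}^2 < \infty$; combined with the classical square-function characterisation of $\gamma$-radonifying operators into $L^p$, this yields
\[
\|\Psi(s)\|_{\R(L^2,L^p)} \;\asymp\; \Big\| \big(\textstyle\sum_k |\sigma(s)\,\Phi e_k|^2\big)^{1/2}\Big\|_{L^p} \;=\; \big\| |\sigma(s)|\, F_\Phi^{1/2}\big\|_{L^p} \;\le\; \|F_\Phi\|_{L^\infty}^{1/2}\,\|\sigma(s)\|_{L^p}.
\]
Hence $\Psi$ is adapted and lies a.s.\ in $L^2(0,T;\R(L^2,L^p))$, so the $L^p$-valued stochastic integral is well defined.

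Next, since $L^p(\RR)$ with $p \in [2,\infty)$ is a UMD space of martingale type $2$, the vector-valued Burkholder inequality of \cite{BP}, \cite{Brzezniak}, \cite{UMD} (built on \cite{BDG}, \cite{Burkholder}) applies directly and gives, for each $\rho \in [1,\infty)$,
\[
\EE \sup_{t\in[0,T]} \Big\| \int_0^t \Psi(s)\,{\rm d}\tilde W_s \Big\|_{L^p}^{\rho} \;\le\; C_{p,\rho,T}\, \EE\Big( \int_0^T \|\Psi(s)\|_{\R(L^2,L^p)}^{2}\,{\rm d}s\Big)^{\rho/2},
\]
which is exactly \eqref{eq:Burkholder}. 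If one prefers to quote only the fixed-time version, the supremum is recovered by applying Doob's maximal inequality to the continuous $L^p$-valued martingale $t\mapsto \int_0^t\Psi\,{\rm d}\tilde W_s$, using that the right-hand side is nondecreasing in $t$ and that $\rho \ge 1$.

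For a self-contained argument in the special case $\rho = p$ — which already suffices in our application, the general $\rho\ge1$ following by Doob once $p$ is taken large or by a Lenglart domination — I would argue pointwise in $x$: for fixed $x$, $t\mapsto \int_0^t \sigma(s,x)\,{\rm d}W_s(x) = \sum_k \int_0^t \sigma(s,x)(\Phi e_k)(x)\,{\rm d}\beta_k(s)$ is a continuous scalar martingale with bracket $\int_0^t |\sigma(s,x)|^2 F_\Phi(x)\,{\rm d}s$, so scalar BDG gives $\EE\big|\int_0^t\sigma(s,x)\,{\rm d}W_s(x)\big|^p \lesssim_p \EE\big(\int_0^t |\sigma(s,x)|^2 F_\Phi(x)\,{\rm d}s\big)^{p/2}$; integrating in $x$ by Fubini, then using Minkowski's integral inequality in $L^{p/2}(\RR)$ (valid as $p/2\ge1$) to bound $\big\|\int_0^t|\sigma(s)|^2 F_\Phi\,{\rm d}s\big\|_{L^{p/2}}$ by $\int_0^t \big\||\sigma(s)|^2 F_\Phi\big\|_{L^{p/2}}\,{\rm d}s \asymp \int_0^t \|\Psi(s)\|_{\R(L^2,L^p)}^2\,{\rm d}s$, and finally Doob for the supremum.

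The only genuinely delicate point is the functional-analytic setup in the first paragraph — checking that $\Psi(s) = \sigma(s)\Phi$ is $\gamma$-radonifying into $L^p$ with the stated square-function norm and that the integrand is admissible for the $L^p$-valued stochastic integral. Once that is in place, \eqref{eq:Burkholder} is either a direct citation of the UMD-valued Burkholder inequality or, in the hands-on route, a routine combination of scalar BDG, Fubini, Minkowski's inequality and Doob's inequality.
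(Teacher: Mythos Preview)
The paper does not prove this proposition at all: it is stated as a known result, with the references \cite{BDG}, \cite{Burkholder}, \cite{BP}, \cite{Brzezniak}, \cite{UMD} given in lieu of a proof. Your proposal is therefore not competing with a proof in the paper but rather supplying one, and what you have written is correct and is precisely the content of those citations --- the identification of \eqref{eq:Burkholder} with the Burkholder--Davis--Gundy inequality for stochastic integrals in the UMD (and martingale type~2) space $L^p$, together with the square-function description of the $\gamma$-radonifying norm.

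Your additional self-contained route for $\rho=p$ via pointwise scalar BDG, Fubini and Minkowski is a nice bonus that the paper does not attempt; it makes the estimate more transparent in this concrete $L^p$ setting, at the cost of only covering $\rho=p$ directly (with the general $\rho$ then recovered by Doob or Lenglart as you indicate). Either route is adequate here, and both are consistent with what the paper is citing.
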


\begin{proof} [Proof of Proposition~\ref{pr:bound_maximal}]
	Since $\rho \geq 5$, we can use Minkowski inequality to change of order of integration and then apply H\"older to get
	\begin{equation*}
	\|M^*\|_{L_{\omega}^{\rho}L_{t}^{5}(0,T_0)} \leq \|M^*\|_{L_{t}^{5}((0,T_0),L_{\omega}^{\rho})} \leq T_{0}^{\frac{1}{5}} \sup_{t \in [0,T_0]} \|M^*(t)\|_{L_{\omega}^{\rho}}. 
	\end{equation*}
	By definition of $M^*$, we have
	\begin{equation*}
	|M^*(t)| \leq 2 \sup_{0 \leq \tau \leq t} \Big\|\int_{0}^{\tau}\sS(t-s) u_{m,\eps}(s) {\rm d}s \Big\|_{L_{x}^{10}}, 
	\end{equation*}
	so we can apply \eqref{eq:Burkholder} to obtain
	\begin{equation*}
	\|M^*(t)\|_{L_{\omega}^{\rho}} \leq C_{\rho} \EE \Big( \int_{0}^{t} \|\sS(t-s) u_{m,\eps}(s)\|_{\R(L_{x}^{2}, L_{x}^{10})}^{2} {\rm d}s \Big)^{\frac{\rho}{2}}. 
	\end{equation*}
	Now, apply Proposition~\ref{pr:dispersive} to the integrand above and use factorization, we get
	\begin{equation*}
	\|M^*(t)\|_{L_{\omega}^{\rho}}^{\rho} \leq C_{\rho} T_{0}^{\frac{\rho}{10}} \|\Phi\|_{\R(L_{x}^{2}, L_{x}^{5/2})} \EE \|u_{m,\eps}\|_{\xX_1(0,T_0)}^{\rho}. 
	\end{equation*}
	Note that the right hand side above does not depend on $t$, so the inequality also holds with the left hand side substituted by $\sup_{t} \|M^*(t)\|_{L_{\omega}^{\rho}}$. The claim then follows since $\|u_{m,\eps}\|_{L_{\omega}^{\rho}} \leq M$ by pathwise mass conservation. 
\end{proof}

\endappendix

\bibliographystyle{Martin}
\bibliography{Refs}

\end{document}